\theoremstyle{plain}
\newtheorem{theorem}{Theorem}[section]
\newtheorem*{thm-nonumber}{Theorem}
\newtheorem{prop}[theorem]{Proposition}
\newtheorem{lem}[theorem]{Lemma}
\theoremstyle{definition}
\newtheorem{defi}[theorem]{Definition}
\newtheorem{notation}[theorem]{Notation}
\newtheorem{rmq}[theorem]{Remark}
\newtheorem{exmp}[theorem]{Example}
\def\<{\left<}
\def\>{\right>}
\def\k{\mathbf{k}}
\def\ens#1{\left\{ #1 \right\}}
\def\fl{{\longrightarrow}\,}
\def\lf{{\longleftarrow}\,}
\def\II{{\mathbf{Ind}}}
\def\A{{\mathbf{A}}}
\def\s{{\mathbf{S}}}
\def\E{{\mathbf{E}}}
\def\b#1{\overline{#1}}
\def\Z{{\mathbb{Z}}}
\def\1{\mathbbm{1}}
\def\x{{\mathbf x}}
\def\ZZ{\mathbb{Z}}
\def\ens#1{\left\{ #1 \right\}}
\def\rep{\rm{rep}}
\def\Rep{\rm{Rep}}
\def\min{{\rm{min}\,}}
\def\Aii{_{\infty}{\mathbb A}_{\infty}}
\def\IG{C_{\infty}}
\def\IGC{\overline{C_{\infty}}}
\def\1{\mathbbm{1}}
\def\Prufer#1#2#3#4{
	\coordinate (x) at (#1,#2);
	\fill (x) circle (.1);
	\draw[#4] (x) .. controls (#1,#2+2.5) and (#1+.5,#2+3) .. (#1+#3,#2+3);
}
\def\adic#1#2#3#4{
	\coordinate (x) at (#1,#2);
	\fill (x) circle (.1);
	\draw[#4] (x) .. controls (#1,#2+2.5) and (#1-.5,#2+3) .. (#1-#3,#2+3);
}
\title[Transfinite mutations]{Transfinite mutations in the completed infinity-gon} 
\author{Karin~Baur and Sira~Gratz}
\begin{document}

\begin{abstract}
 We introduce mutation along infinite admissible sequences for infinitely marked surfaces, that is surfaces with infinitely many marked points on the boundary. We show that mutation along such admissible sequences produces a preorder on the set of triangulations of a fixed infinitely marked surface. We provide a complete classification of the strong mutation equivalence classes of triangulations of the infinity-gon and the completed infinity-gon respectively, where strong mutation equivalence is the equivalence relation induced by this preorder. Finally, we introduce the notion of transfinite mutations in the completed infinity-gon and show that all its triangulations are transfinitely mutation equivalent, that is we can reach any triangulation of the completed infinity-gon from any other triangulation via a transfinite mutation. 
\end{abstract}

\maketitle

\setcounter{tocdepth}{1}
\tableofcontents

\section*{Comments}

\section*{Introduction}

Triangulations of surfaces with marked points give rise to an interesting class of cluster algebras, which are tractable but sufficiently complicated to display a rich array of cluster combinatorics. The fact that they come together with a natural topological model means they play a key role in advancing our understanding of cluster theory, serving as important examples to test theories about general cluster
algebras and categories (cf.\ for example \cite{BZ:markedsurfaces} \cite{FST:markedsurfaces}, \cite{HJ:Ainfinite} \cite{MSW:positivitysurfaces} and \cite{MSW:bases}). Traditionally, only triangulations of surfaces with finitely many marked points have been studied in the context of cluster theory. With the rising interest in cluster algebras and categories of infinite rank (cf.\ for example \cite{GG:infinite}, \cite{G:colimits}, \cite{HL:KRmodules} \cite{HJ:Ainfinite}, \cite{IT:cyclicposets}), it is natural to extend the theory to a setting with infinitely many marked points, and consider what we call {\em infinitely marked surfaces}.

The idea to consider triangulations and mutations of infinitely marked surfaces is not new and has been executed in the context of cluster categories for example in \cite{HJ:Ainfinite} and \cite{IT:cyclicposets} and in the context of cluster algebras in \cite{GG:infinite} and \cite{G:colimits}. By introducing infinitely many marked points, interesting phenomena occur which do not appear in the finite setting. One noteable feature of infinitely marked surfaces, as opposed to finitely marked surfaces, is that two different triangulations are in general not connected by  finitely many mutations. In particular, two distinct triangulations of the same infinitely marked surface will in general give rise to two distinct cluster algebras of infinite rank in the sense of \cite{GG:infinite}.

In the present paper we study infinite mutations for infinitely marked surfaces, motivated by overcoming the finiteness constraints of the classical theory. We introduce the notion of mutation along infinite admissible sequences, and show such mutations connect previously disconnected components of the exchange graph. In fact, examples of mutations in cluster algebras along infinite admissible sequences have previously been used in \cite{HL:KRmodules}, and we formalize the idea here in the context of infinitely marked surfaces. We consider two important examples in more detail: the $\infty$-gon, which can be pictured as the line of integers, and the completed $\infty$-gon, which we obtain from the $\infty$-gon by completing with points at $\pm \infty$. Our main reason for studying these examples is that they provide combinatorial models for relatively well-studied examples in cluster theory of infinite rank: the $\infty$-gon relates to the cluster category studied in \cite{HJ:Ainfinite}, and in more generality in \cite{IT:cyclicposets}, and to the cluster algebras studied in \cite{GG:infinite}, whereas the completed $\infty$-gon 
relates to the representation theory of a polynomial ring in one variable. 

Single mutations are involutive, and therefore, if we can mutate from a triangulation $T$ to a triangulation $T'$ in finitely many steps, there is a way to mutate back from $T'$ to $T$. This is not the case anymore if we consider mutation along infinite admissible sequences. In this sense, we can think of mutations along infinite admissible sequences as being directed. We write $T \leq_s T'$ whenever we can mutate $T$ to $T'$ along an admissible sequence.  Our first main result is the following theorem.

\begin{thm-nonumber}[Theorem~\ref{T:partial order}]
The relation $\leq_{s}$ defines a preorder on the set of triangulations of a fixed infinitely marked surface.
\end{thm-nonumber}

The main focus of our article is on the $\infty$-gon and on the completed $\infty$-gon, two surfaces 
that have been at the centre of interest in cluster theory because of their relation to Dynkin type $A$ 
combinatorics.  We call two triangulations $T$ and $T'$ strongly mutation equivalent, if we have $T \leq_s T'$ as well as $T' \leq_s T$, that is, if they are equivalent under the equivalence relation induced by the above preorder.

\begin{thm-nonumber}[Theorems~\ref{T:strong equiv.cl.} and~\ref{T:strong equiv.cl.completed}]
 Two triangulations of the $\infty$-gon are strongly mutation equivalent if and only if they are both locally finite or they both have a left fountain at $a \in \ZZ$ and a right fountain at $b \in \ZZ$ with $a \leq b$.
 
 Two triangulations of the completed $\infty$-gon are strongly mutation equivalent if and only if they are both locally finite or they both have a left fountain at $a \in \ZZ \cup \{\pm \infty\}$ and a right fountain at $b \in \ZZ \cup \{\pm \infty\}$, where $a \leq b$ or $a = \infty$ and $b \in \ZZ$ or $a \in \ZZ$ and $b = -\infty$.
\end{thm-nonumber}

Mutating a triangulation along an infinite admissible sequence does not in general yield a triangulation. As a next step, we introduce completed mutations in the completed $\infty$-gon. In general, there are many ways to complete what one obtains from such a mutation to a triangulation. In the setting of the completed $\infty$-gon, there is however a natural way to complete with strictly asymptotic arcs, that is, with arcs connecting to the limit points at $\pm \infty$.

Finally, we introduce transfinite mutations in the completed $\infty$-gon. They are mutations along possibly infinite sequences of completed mutations. We call two triangulations $T$ and $T'$ transfinitely mutation equivalent, if there exists a transfinite mutation from $T$ to $T'$ as well as one from $T'$ to $T$.

\begin{thm-nonumber}[Theorem~\ref{T:all-transf-equiv}] 
 Any two triangulations of the completed $\infty$-gon are transfinitely mutation equivalent.
\end{thm-nonumber}

During the completion of this paper we learned that I. Canakci and A. Felikson are independently studying infinite 
sequences of mutations for cluster algebras coming from infinitely marked surfaces. 
Their results are now available as a preprint \cite{cf}. 

\section{Triangulations of infinitely marked surfaces}\label{section:infinity} 

Throughout this paper we only consider surfaces with boundary with marking such that all the marked points lie on the boundary. We are however convinced that the theory presented in this manuscript can be naturally extended to allow punctures, i.e.\ internal marked points. Throughout, when we speak of an infinitely marked surface $(S,M)$, we mean the following setup.

\begin{defi}
 A {\em infinitely marked surface} is a pair $(S,M)$ where 
 \begin{itemize}
  \item $S$ is a connected oriented 2-dimensional Riemann surface with a non-trivial boundary $\delta S$;
  \item$M \subseteq \delta S$ is an infinite
  set of marked points such that each connected component of $\delta S$ contains at least one marked point in $M$. 
 \end{itemize}
 
\end{defi}

Throughout the paper we denote by $(S,M)$ an infinitely marked surface. We define arcs in an infinitely marked surface analogously to \cite[Definition~2.2]{FST:markedsurfaces}.

\begin{defi}
 An {\em arc $\theta$ in $(S,M)$} is a curve such that
 \begin{enumerate}
  \item the curve $\theta$ connects two marked points in $M$,
  \item the curve $\theta$ does not intersect itself, except possibly at its endpoints,
  \item except for the endpoints, the curve $\theta$ is distinct from $\delta S$,
  \item the curve $\theta$ is not isotopic to a connected component of $\delta S \setminus M$.
 \end{enumerate}
 Arcs are considered up to isotopy inside the class of such curves.
 On the other hand, if a curve satisfies $(1)$, $(2)$ and $(3)$, but it is isotopic to a connected component of $\delta S \setminus M$, we call it an {\em edge} of $(S,M)$. We set
 \[
  \A(S,M) = \{\text{arcs of $(S,M)$}\} \; \; \text{and} \;\; \E(S,M) = \{\text{edges of $(S,M)$}\}.
 \]
 We say that two arcs $\alpha \neq \beta \in \A(S,M)$ are {\em compatible}, if they do not intersect in $S \setminus M$.
\end{defi}

 We are interested in triangulations of $(S,M)$, which traditionally correspond to clusters in the theory of cluster algebras and cluster categories.
 
 \begin{defi}\label{D:triangulation}
  A {\em triangulation} of $(S,M)$ is a maximal set of compatible arcs of $(S,M)$. 
 \end{defi}
 
 \begin{rmq}
  The term ``triangulation'' can in some instances be seen as an abuse of language. In fact, a triangulation in the sense of Definition \ref{D:triangulation} does not in general partition the surface $(S,M)$ into triangles. In the case where $M$ is not discrete we might even get rather unintuitive triangulations, for example if we consider the unit disc where every point on its boundary $S^1$ is a marked point. Picking a point $a \in S^1$, the set
  \[
   \{(a,b) \mid b \in S^1 \setminus \{a\}\}
  \]
  is a triangulation of this infinitely marked surface in the sense of Definition \ref{D:triangulation}. 
  
 \end{rmq}

In this paper, we are interested in two particular marked surfaces, which are closely related to Dynkin type $A$ combinatorics. In many ways they form the simplest cases of infinitely marked surfaces. The first is the $\infty$-gon $\IG$, which has a discrete set of marked points, and the second is the completed $\infty$-gon $\IGC$, which we get from $\IG$ by adding limit points.

\subsection{Triangulations of the infinity-gon}

Consider the infinitely marked surface $\IG = (S,M)$, where $S = D^1$ is the unit disc and $M \subseteq S^1$ is a discrete set of marked points with one two-sided limit point. We call $\IG$ the {\em $\infty$-gon} and cut open the circle at the limit point to picture the boundary $\delta S$ as the line of integers, cf.\ Figure \ref{fig:infty-gon}. An {\em arc} in $\IG$ is an ordered pair of integers $(i,j)$ with $i \leq j-2$. Two arcs $(i,j)$ and $(k,l)$ in $C_{\infty}$ are not compatible if and only if we have $i < k < j < l$ or $k < i < l < j$. 

\begin{figure}
\begin{center}
\begin{tikzpicture}[scale =.65]
\tikzstyle{every node}=[font=\small]
\draw (-7.5,0) -- (8.5,0);
\draw[dotted] (-8.5,0) -- (-5.5,0);
\draw[dotted] (8.5,0) -- (9.5,0);
\draw (0,0.1) -- (0,-0.1) node[below]{$0$};
\draw (1,0.1) -- (1,-0.1) node[below]{$1$};
\draw (-1,0.1) -- (-1,-0.1) node[below]{$-1$};
\draw (2,0.1) -- (2,-0.1) node[below]{$\ldots$};
\draw (-2,0.1) -- (-2,-0.1) node[below]{$\ldots$};
\draw (3,0.1) -- (3,-0.1); 
\draw (4,0.1) -- (4,-0.1); 
\draw (-3,0.1) -- (-3,-0.1) node[below]{$a$};
\draw (5,0.1) -- (5,-0.1) node[below]{$b$};
\draw (6,0.1) -- (6,-0.1);
\draw (7,0.1) -- (7,-0.1);
\draw (8,0.1) -- (8,-0.1);
\draw (-4,0.1) -- (-4,-0.1);
\draw (-5,0.1) -- (-5,-0.1);
\draw (-6,0.1) -- (-6,-0.1);

\path (-3,0) edge [out= 60, in= 120] node[below]{$(a,b)$} (5,0);

\end{tikzpicture}
\end{center}
\caption{An arc $(a,b)$ in the $\infty$-gon $\IG$}\label{fig:infty-gon}
\end{figure}

\begin{defi}
We call a triangulation of $\IG$ {\em locally finite} if for any vertex $l \in \ZZ$ there are only finitely many arcs of the form $(k,l)$ or $(l,m)$.

Let $n \in \Z$. A \emph{left fountain at $n$} is a family $\ens{(p,n) \ | \ p \in P}$ in $\A(\IG)$ such that $P \subseteq \; ]-\infty,n-2]$ is infinite. Dually, a \emph{right fountain at $n$} is a family $\ens{(n,p) \ | \ p \in P}$ in $\A(\IG)$ such that $P \subseteq [n+2,+\infty[$ is infinite. A \emph{split fountain} is the union of a left fountain at some $n \in \ZZ$ and of a right fountain at some $m \in \ZZ$, with $m>n$. 
		
\end{defi}

\begin{lem}[{\cite[Lemma~3.3]{HJ:weakclustertilting}}]
 A triangulation $T$ of $\IG$ is either locally finite or has precisely one left fountain and one right fountain.
\end{lem}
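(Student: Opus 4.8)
The plan is to reduce the lemma to two facts and then assemble them. Fact~(a): $T$ has at most one left fountain and at most one right fountain. Fact~(b): if $T$ has a left fountain then it also has a right fountain --- and, applying the reflection $i \mapsto -i$ of $\IG$, which interchanges left and right fountains, conversely. Granting these, the lemma is immediate: a locally finite triangulation has no fountain at all, while if $T$ is not locally finite then some vertex is incident to infinitely many arcs, hence to infinitely many on one of its two sides, so $T$ has a fountain; by~(b) it then has both a left and a right fountain, and by~(a) exactly one of each.

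Fact~(a) is a short crossing argument. Suppose $T$ had left fountains at vertices $m < n$. Recall that arcs $(i,j)$ and $(k,l)$ of $\IG$ fail to be compatible exactly when $i<k<j<l$ or $k<i<l<j$. Since the left fountain at $n$ is unbounded below, pick $(p',n) \in T$ with $p'<m$; since the left fountain at $m$ is unbounded below, pick $(p,m) \in T$ with $p<p'$. Then $p<p'<m<n$, so $(p,m)$ and $(p',n)$ cross, contradicting that both lie in $T$. The statement for right fountains follows by reflection.

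The content is Fact~(b). Suppose $T$ has a left fountain at $v$ but, for contradiction, no right fountain at any vertex. First note that no arc of $T$ straddles $v$: an arc $(a,b)$ with $a<v<b$ would cross $(p,v)$ for any $p<a$, and such arcs $(p,v)$ belong to $T$ because the left fountain at $v$ is unbounded below. Now build a greedy ``zig-zag toward $+\infty$'': put $v_0=v$; given $v_i$, the set of arcs $(v_i,q) \in T$ with $q>v_i$ is finite (there is no right fountain at $v_i$), so let $v_{i+1}$ be the largest such $q$, or $v_{i+1}=v_i+1$ if this set is empty. This produces a strictly increasing sequence $v_0<v_1<v_2<\cdots$, and the arcs $(v_i,v_{i+1})$ bound finite polygons $\ens{v_i,v_i+1,\ldots,v_{i+1}}$ on each of which $T$ restricts to a triangulation. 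I claim the arc $\gamma=(v_1,v_3)$ contradicts maximality of $T$. It is a genuine arc, since $v_3 \ge v_1+2$. It does not lie in $T$, since it would be a right arc at $v_1$ reaching $v_3 > v_2$, beyond the reach of every right arc of $T$ at $v_1$. And $\gamma$ is compatible with every arc of $T$, so $T \cup \ens{\gamma}$ would be a strictly larger set of pairwise compatible arcs, contradicting maximality of $T$.

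Verifying this last compatibility is where the real work lies, and is the step I expect to be the main obstacle. By the no-straddling remark, an arbitrary $(c,d) \in T$ has either both endpoints $\le v$ or both endpoints $\ge v$. In the first case it is separated from $\gamma$, whose endpoints are $\ge v_1 > v$. In the second case $c$ lies in some interval $[v_i,v_{i+1}]$, and the crucial sub-point is that then $d \le v_{i+1}$ as well --- by the choice of $v_{i+1}$ when $c=v_i$, and because $(c,d)$ would otherwise cross $(v_i,v_{i+1}) \in T$ when $v_i<c<v_{i+1}$. So $(c,d)$ is confined to one of $[v_0,v_1]$, $[v_1,v_2]$, $[v_2,v_3]$ --- each contained in $[v,v_3]$ --- or to some $[v_i,v_{i+1}]$ with $i\ge 3$, which meets $[v_1,v_3]$ only possibly in $v_3$; in each such position the incompatibility inequalities against $\gamma=(v_1,v_3)$ fail, or the two arcs share an endpoint. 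This contradiction establishes Fact~(b), which together with Fact~(a) proves the lemma.
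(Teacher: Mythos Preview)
The paper does not prove this lemma at all; it simply cites it as \cite[Lemma~3.3]{HJ:weakclustertilting}. So there is no ``paper's own proof'' to compare against.

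Your argument is correct and self-contained. The decomposition into Facts~(a) and~(b) is natural, and the crossing argument for~(a) is immediate. For~(b), your greedy zig-zag construction $v_0 < v_1 < v_2 < \cdots$ and the resulting contradiction via the arc $(v_1,v_3)$ work as written. One small point worth making explicit in the confinement step: when you place $c$ in an interval $[v_i,v_{i+1}]$, you should really take the unique $i$ with $v_i \le c < v_{i+1}$ (so that the cases ``$c=v_i$'' and ``$v_i < c < v_{i+1}$'' are exhaustive); otherwise $c=v_{i+1}$ would require looking at the next interval. Also, when you invoke that $(v_i,v_{i+1}) \in T$ in the case $v_i < c < v_{i+1}$, this is justified precisely because the existence of such a $c$ forces $v_{i+1} \ge v_i+2$, hence the ``or $v_i+1$'' branch of your definition did not occur and $(v_i,v_{i+1})$ is the arc realising the maximum. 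These are already implicit in your write-up, but spelling them out would remove any doubt.
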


\subsection{Triangulations of the completed infinity-gon}

We complete the $\infty$-gon with a point at $\infty$ and a point at $-\infty$. This yields the {\em completed $\infty$-gon $\IGC$}. Formally speaking, it is the unit disc with countably many marked points on the boundary, which converge to to a limit point $a$ in a clockwise direction and to a limit point $b$ in an anti-clockwise direction, and where there are no marked points between $a$ and $b$ when going in a clockwise direction. Cutting open the circle at a point between $a$ and $b$ (in a clockwise direction), we obtain the line of integers with two added limit points at $\pm \infty$, cf.\ Figure \ref{fig:arcsInf}.

\begin{rmq}\label{R:one limit point}
 In some ways it might be more natural to consider the completion where we only add in one point at $\infty$ (and consider the unit disc with countably many marked points that converge to precisely one limit point from both a clockwise and an anti clockwise direction). However, we are particularly interested in the combinatorial model with both points at $\infty$ and $-\infty$ as it fits well with the combinatorics of modules over polynomial rings, cf. Section~\ref{ssec:poly-rings}. 
\end{rmq}

Arcs in $\overline{C_\infty}$ come in two different forms. They can be of the form $(i,j)$ where $i,j$ are integers with $i \leq j-2$. Such an arc $(i,j)$ is called a {\em peripheral arc}. Furthermore, we get the following {\em strictly asymptotic arcs} involving the points at $\pm \infty$:
\begin{itemize}
\item For each $m \in \ZZ$ the {\em adic curve at $i$} is the arc $\alpha_m = (-\infty, m)$.
\item For each $m \in \ZZ$ the {\em Pr\"ufer curve at $m$} is the arc $\pi_m = (m, \infty)$. 
\item The {\em generic curve} is the arc $z = (-\infty,\infty)$.
\end{itemize}
Figure \ref{fig:arcsInf} provides a picture of some strictly asymptotic arcs.
Two arcs $(i,j)$ and $(k,l)$ in $\IGC$ are not compatible if and only if we have $i < k < j < l$ or $k < i < l < j$.

	\begin{figure}[htb]
		\begin{center}
			\begin{tikzpicture}[scale = .4]
				\tikzstyle{every node} = [font = \small]
				\foreach \x in {0}
				{
					\foreach \y in {-8}
					{
						\draw[-] (\x-5,\y-4) -- (\x+5,\y-4);

						\foreach \t in {-4,-3,...,4}
						{
							\fill (\x+\t,\y-4) circle (.1);
						}

						\fill (\x-4,\y-4) ;
						\fill (\x-3,\y-4) ;
						\fill (\x-2,\y-4) ;
						\fill (\x-1,\y-4) node [below] {$s$};
						\fill (\x,\y-4) ;
						\fill (\x+1,\y-4) ;
						\fill (\x+2,\y-4) node [below] {$r$};
						\fill (\x+3,\y-4) ;
						\fill (\x+4,\y-4) ;

						\adic{\x-1}{\y-4}{9}{}
						\fill (\x-3,\y-1.5) node [below] {$\alpha_{s}$};
						\fill (\x-10,\y-1) node [below] {\tiny $-\infty$};

						\Prufer{\x+2}{\y-4}{9}{}
						\fill (\x+4,\y-1.5) node [below] {$\pi_{r}$};
						\fill (\x+11,\y-1) node [below] {\tiny $+\infty$};

						\draw[-] (\x-10,\y) -- (\x+11,\y);
						\fill (\x+.5,\y) node [above] {$z$};

					}
				}
			\end{tikzpicture}
		\end{center}
		\caption{Arcs in the completed $\infty$-gon $\IGC$}\label{fig:arcsInf}
	\end{figure}

\begin{rmq}\label{R:generic}
The generic curve $z$ is compatible with any arc in $\IGC$. Therefore, any triangulation of $\IGC$ contains $z$. 
When we explicitly write down triangulations of $\IGC$, for brevity we will usually omit the generic curve.
\end{rmq}

The notions of local finiteness, right, left and split fountains naturally carry over from triangulations of $\IG$. However, we can also have fountains in $\IGC$ at $\pm \infty$.

\begin{defi}

A {\em left fountain} at $\infty$ (respectively at $- \infty$) is a family $\ens{(p,\infty) \ | \ p \in P}$ (respectively a family $\ens{(-\infty,p) \ | \ p \in P}$) in $\A(\IGC)$  where $P \cap \; ]-\infty,0]$ is infinite. Dually, a {\em right fountain} at $\infty$ (respectively at $- \infty$) is a family $\ens{(p,\infty) \ | \ p \in P}$ (respectively a family $\ens{(-\infty,p) \ | \ p \in P}$) in $\A(\IGC)$  where $P \cap [0,\infty[$ is infinite.
		
\end{defi}

\begin{lem}\label{lem:fountaincurve}
	Let $T$ be a triangulation of $\IGC$ and $n \in \Z$. If $T$ contains a right fountain at $n$, then $\pi_n \in T$ and if $T$ contains a left fountain at $n$, then $\alpha_n \in T$.
\end{lem}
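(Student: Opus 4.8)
The plan is to argue by contradiction, exploiting the incompatibility criterion for arcs together with the maximality of a triangulation. Suppose $T$ is a triangulation of $\IGC$ containing a right fountain at $n$, i.e.\ an infinite family $\{(n,p) \mid p \in P\}$ with $P \subseteq [n+2,\infty[$ infinite. I want to show $\pi_n = (n,\infty) \in T$. Since $T$ is a maximal set of compatible arcs, it suffices to show that $\pi_n$ is compatible with every arc of $T$; then maximality forces $\pi_n \in T$. So let $\gamma$ be any arc of $T$ and assume for contradiction that $\gamma$ is incompatible with $\pi_n$. By the incompatibility criterion (reading $\infty$ as larger than every integer), $\gamma$ must be of the form $(k,l)$ with either $k < n < l < \infty$ or $n < k < \infty < l$; the latter is impossible since nothing exceeds $\infty$, so $\gamma = (k,l)$ with $k < n < l$ and $l$ an integer (or $l = \infty$, but then $\gamma$ and $\pi_n$ share the endpoint $\infty$ and are compatible, contradiction; likewise $k = -\infty$ is allowed). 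Thus $\gamma = (k,l)$ with $k < n$, $n < l$, $l \in \ZZ$.

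The heart of the argument is now to derive a contradiction with the fact that $\gamma \in T$ and $T$ also contains the infinite right fountain $\{(n,p) \mid p \in P\}$. Since $P$ is infinite and $P \subseteq [n+2,\infty[$, I can choose $p \in P$ with $p > l$. Then the arcs $(k,l)$ and $(n,p)$ satisfy $k < n < l < p$, which is precisely the incompatibility condition ``$i < k < j < l$'' with $(i,j) = (k,l)$ and the other pair $(n,p)$. Hence $(k,l)$ and $(n,p)$ are incompatible, contradicting that both lie in the triangulation $T$. This contradiction shows no such $\gamma$ exists, so $\pi_n$ is compatible with all of $T$, and by maximality $\pi_n \in T$.

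The left fountain statement is entirely dual: if $T$ contains $\{(p,n) \mid p \in P\}$ with $P \subseteq \;]-\infty,n-2]$ infinite, one shows $\alpha_n = (-\infty,n)$ is compatible with every arc $\gamma = (k,l) \in T$. An incompatibility of $\gamma$ with $\alpha_n$ would force (up to the symmetry in the criterion) $k < -\infty < l < n$, impossible, or $-\infty < k < n < l$, i.e.\ $k$ an integer with $k < n < l$; then picking $p \in P$ with $p < k$ gives $p < k < n < l$, so $(p,n)$ and $(k,l)$ are incompatible, contradicting $\gamma, (p,n) \in T$. Hence $\alpha_n$ is compatible with all of $T$ and lies in $T$ by maximality. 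One small point to handle carefully throughout is the bookkeeping of the cases where an endpoint of $\gamma$ equals $n$ or is a point at $\pm\infty$: in all of these the shared endpoint immediately yields compatibility, so they cause no trouble. I do not anticipate a serious obstacle here; the only mild subtlety is making the incompatibility criterion, stated for peripheral arcs, apply cleanly when $l$ or $k$ is $\pm\infty$, which the paper has already set up by declaring strictly asymptotic arcs to obey the same criterion with the natural order on $\ZZ \cup \{\pm\infty\}$.
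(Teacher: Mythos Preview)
Your proof is correct and follows essentially the same approach as the paper: show that $\pi_n$ is compatible with every arc of $T$ by noting that any arc incompatible with $\pi_n$ has the form $(k,l)$ with $k<n<l$ and $l\in\ZZ$, and then using an arc $(n,p)$ from the right fountain with $p>l$ to derive a contradiction; maximality then forces $\pi_n\in T$. The only cosmetic difference is that the paper explicitly separates the two types of arcs incompatible with $\pi_n$ (peripheral arcs $(m,l)$ with $m<n<l$ and adic curves $\alpha_l$ with $l>n$), whereas you handle both at once via the uniform incompatibility criterion on $\ZZ\cup\{\pm\infty\}$.
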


\begin{proof}
Assume that $T$ contains a right fountain at $n$ and consider the Pr\"ufer curve $\pi_n$. 
It is compatible with any arc in $\IGC$ except~:
\begin{itemize}
	\item the arcs of the form $(m,l)$ with $m<n$ and $l>n$,
	\item the adic curves of the form $\alpha_l$ with $l>n$.
\end{itemize}
Let $l>n$. Since $T$ contains a right fountain, it contains an arc of the form $(n,p)$ with $p>l$. Therefore, $(n,p)$ intersects any arc of the above form. Thus, $T$ only contains arcs which are compatible with $\pi_n$. By maximality of $T$, we have $\pi_n \in T$.
	
The fact that if $T$ contains a left fountain at $n$ then $\alpha_n \in T$ follows by symmetry.
\end{proof}
		
		\begin{lem}\label{lem:pushitfurther}
			Let $T$ be a triangulation of $\IGC$ and $n \in \Z$. 
			\begin{enumerate}
				\item Assume that $\pi_n \in T$. Then $T$ contains a right fountain at $n$ or there exists an $m>n$ such that $\pi_m \in T$.
				Dually, if $\alpha_n \in T$, then $T$ contains a left fountain at $n$ or there exists $m<n$ such that $\alpha_m \in T$.
				\item Assume that $\pi_n \in T$. If $T$ contains a right fountain at $n$ and there is no $k<n$ with $\pi_k\in T$, then $\alpha_n\in T$. 
				Dually, if $\alpha_n \in T$ and $T$ contains  a left fountain at $n$, such that there is no $l>n$ with $\alpha_l\in T$, then $\pi_n\in T$. 
			\end{enumerate}
		\end{lem}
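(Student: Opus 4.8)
\textbf{Proof plan for Lemma~\ref{lem:pushitfurther}.}

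The plan is to argue directly from the compatibility description of strictly asymptotic arcs and the maximality of triangulations, exactly in the style of the proof of Lemma~\ref{lem:fountaincurve}. I will treat the two parts separately, proving in each case the first (``$\pi$'') statement and obtaining the dual by the left-right symmetry of $\IGC$.

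For part (1), suppose $\pi_n \in T$ but $T$ does \emph{not} contain a right fountain at $n$. This means that only finitely many arcs of the form $(n,p)$ lie in $T$; let $m$ be the largest integer such that $(n,m)\in T$ (if there is no such arc at all, a brief separate argument with $\pi_{n+1}$ applies, using that $\pi_n$ and $\pi_{n+1}$ are compatible and that nothing ``beyond'' $n$ in $T$ can obstruct $\pi_{n+1}$). The claim is then that $\pi_m \in T$. To see this I check, using the compatibility list for $\pi_m$ from the proof of Lemma~\ref{lem:fountaincurve}, that every arc of $T$ is compatible with $\pi_m$: the only possible obstructions are peripheral arcs $(k,l)$ with $k<m<l$ and adic curves $\alpha_l$ with $l>m$. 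Such an arc, together with the arc $(n,m)\in T$ (note $n<m$), would have to either cross $(n,m)$ — contradicting compatibility of arcs of $T$ — or else be ``nested'' in a way that forces an arc $(n,p)\in T$ with $p>m$, contradicting maximality of $m$. Here the key point is that $\pi_n\in T$ already rules out the arcs $(k,l)$ with $k<n<l$ and the $\alpha_l$ with $l>n$, so one only has to worry about obstructions ``between $n$ and $m$'', and these are precisely killed by the presence of $(n,m)$. By maximality of $T$ we conclude $\pi_m\in T$.

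For part (2), assume $\pi_n\in T$, that $T$ has a right fountain at $n$, and that there is no $k<n$ with $\pi_k\in T$; we must show $\alpha_n\in T$. The adic curve $\alpha_n=(-\infty,n)$ is compatible with every arc of $\IGC$ except: peripheral arcs $(k,l)$ with $k<n<l$; Pr\"ufer curves $\pi_k$ with $k<n$; and (depending on conventions) arcs $(k,n)$ or $\alpha_l$ that share only the endpoint $n$ — these last are compatible, so the genuine obstructions are the $(k,l)$ with $k<n<l$ and the $\pi_k$ with $k<n$. The hypothesis ``no $k<n$ with $\pi_k\in T$'' eliminates the second type directly. For the first type, the right fountain at $n$ provides an arc $(n,p)\in T$ with $p>l$, and $(n,p)$ crosses $(k,l)$ whenever $k<n<l<p$, so no such $(k,l)$ can lie in $T$. (One must also note that $\pi_n\in T$ is compatible with $(n,p)$ and with $\alpha_n$, so no contradiction arises there.) Hence every arc of $T$ is compatible with $\alpha_n$, and maximality of $T$ gives $\alpha_n\in T$.

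I expect the main obstacle to be purely bookkeeping: getting the compatibility/crossing conditions for the asymptotic arcs exactly right at the shared endpoints and in the degenerate cases (e.g.\ when $T$ contains no peripheral arc $(n,p)$ at all in part (1), or interactions among several asymptotic arcs), and making sure the left-right symmetry argument is legitimately just a relabelling $i\mapsto -i$ that swaps $\pi_m\leftrightarrow\alpha_{-m}$ and ``right fountain''$\leftrightarrow$``left fountain''. The conceptual content is entirely contained in the crossing combinatorics already used for Lemma~\ref{lem:fountaincurve}.
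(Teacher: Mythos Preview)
Your proposal is correct and close in spirit to the paper's proof, with a mild variation in part~(1). The paper sets $p_0=\max\{p\geq n+1\mid (n,p)\in T\cup\E(\IGC)\}$ (including the edge $(n,n+1)$, which absorbs your degenerate case) and then argues by contradiction: if there is no right fountain at $p_0$, one forms $p_1=\max\{p\mid (p_0,p)\in T\cup\E(\IGC)\}$ and checks that $(n,p_1)$ is compatible with all of $T$, contradicting maximality of $p_0$; hence there is a fountain at $p_0$, and Lemma~\ref{lem:fountaincurve} yields $\pi_{p_0}\in T$. Your route---directly verifying that $\pi_m$ is compatible with every arc of $T$ and invoking maximality---is slightly more direct and avoids the case split; both are short compatibility checks of the same flavour. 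For part~(2) the paper just says the claim is clear since $\alpha_n$ is compatible with $\pi_n$ and with every arc lying to the left or to the right of $n$; your argument is a correct expansion of this, though note that the right-fountain hypothesis is in fact superfluous: already $\pi_n\in T$ forbids any peripheral $(k,l)$ with $k<n<l$, so only the assumption ``no $\pi_k$ with $k<n$'' is really used.
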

		\begin{proof}
			We only prove the statements for the Pr\"ufer curves, the ones for the adic curves being dual.
			
			(1) Assume that $T$ contains the Pr\"ufer curve $\pi_n$ and assume that there are at most finitely many arcs of the form $(n,p)$ with $p \geq n+2$. Let 
			\[
			p_0 = \max\ens{p \geq n+1 \ | \ (n,p) \in T \cup \E(\IGC)}.
			\]
			If $T$ contains a right fountain at $p_0$, then it follows from Lemma \ref{lem:fountaincurve} that $\pi_{p_0} \in T$ and we are done. Assume therefore that $T$ does not 					contain a right fountain at $p_0$ and let
			\[
			 p_1 = \max\ens{p \geq p_0+1 \ | \ (p_0,p) \in T\cup \E(\IGC)}.
			\]
			Then $(n,p_1)$ is compatible with any arc in $T$, so it belongs to $T$. However, $p_1 > p_0$, a contradiction. 
			(2) is clear: the adic curve $\alpha_n$ is compatible with $\pi_n$ and every arc to the right of $n$ as well as every arc to the left of $n$. 
		\end{proof}
		
We obtain the following classification of triangulations of $\IGC$.		

\begin{theorem}\label{T:classification triangulations of IGC}
Let $T$ be a triangulation of $\IGC$. Then exactly one of the following holds.
\begin{itemize}
  \item $T$ is locally finite and consists exclusively of peripheral arcs and the generic curve.
  \item $T$ has a left fountain at a unique $a \in \ZZ \cup \{\pm \infty\}$ and a right fountain at a unique $b \in \ZZ \cup \{\pm \infty\}$ with $a \leq b$.
  \item $T$ has a left fountain at a unique $a \in \ZZ \cup \{\pm \infty\}$ and a right fountain at a unique $b \in \ZZ \cup \{\pm \infty\}$ with $a = \infty$ and $b \in \ZZ$ or $a \in \ZZ$ and $b = -\infty$. 
\end{itemize}
\end{theorem}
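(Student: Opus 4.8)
The generic curve $z$ belongs to every triangulation of $\IGC$ by Remark~\ref{R:generic}, and the argument splits according to whether $T$ contains a strictly asymptotic arc other than $z$; in the second case the fountain data is read off from the sets $A=\{m\in\ZZ:\alpha_m\in T\}$ and $\Pi=\{m\in\ZZ:\pi_m\in T\}$.

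Suppose first that $T$ contains no asymptotic arc besides $z$. Then $T\setminus\{z\}$ is a family of pairwise compatible peripheral arcs which, since $z$ is compatible with every peripheral arc, is maximal among such families; as compatibility of peripheral arcs in $\IGC$ coincides with that of arcs in $\IG$, this makes $T\setminus\{z\}$ a triangulation of $\IG$. By the recalled classification, $T\setminus\{z\}$ is either locally finite or has a left fountain at some integer $n$ and a right fountain at some integer $m$. A left (respectively right) fountain at an integer $n$ is also a left (respectively right) fountain at $n$ in $\IGC$, and hence by Lemma~\ref{lem:fountaincurve} would force $\alpha_n\in T$ (respectively $\pi_n\in T$), contrary to our assumption. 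So $T\setminus\{z\}$ is locally finite, whence $T$ is locally finite and consists of peripheral arcs together with $z$, which is the first bullet.

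Now suppose $A\cup\Pi\neq\emptyset$. The argument rests on a few elementary compatibility facts, all immediate from the incompatibility criterion and the proof of Lemma~\ref{lem:fountaincurve}: $\alpha_a$ is compatible with $\pi_p$ exactly when $a\le p$, so every element of $A$ is $\le$ every element of $\Pi$; a peripheral arc $(i,j)$ is incompatible with $\pi_n$, and with $\alpha_n$, exactly when $i<n<j$; and any two adic curves, any two Pr\"ufer curves, and $z$ with everything, are compatible. From these I deduce two boundary claims: if $\Pi\neq\emptyset$ is bounded below, then $\min\Pi\in A$ (for $T$ then contains no peripheral arc spanning over $\min\Pi$, since such an arc would cross $\pi_{\min\Pi}$, and no $\pi_l$ with $l<\min\Pi$, so $\alpha_{\min\Pi}$ is compatible with all of $T$ and hence lies in $T$); and dually, if $A\neq\emptyset$ is bounded above, then $\max A\in\Pi$. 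Next I locate the fountains. If $\Pi\neq\emptyset$: when $\Pi$ is bounded above, Lemma~\ref{lem:pushitfurther}(1) applied to $\pi_{\max\Pi}$ yields a right fountain at $\max\Pi$; when $\Pi$ is unbounded above, $\{\pi_m:m\in\Pi\}$ is itself a right fountain at $\infty$; so in both cases $T$ has a right fountain at $b:=\sup\Pi\in\ZZ\cup\{\infty\}$. If $\Pi=\emptyset$, then $A\neq\emptyset$ is not bounded above by the second boundary claim, so $\{\alpha_m:m\in A\}$ is a right fountain at $-\infty$ and we put $b:=-\infty$. Dually, $T$ has a left fountain at $a:=\inf A\in\ZZ\cup\{-\infty\}$ if $A\neq\emptyset$, and at $a:=\infty$ if $A=\emptyset$. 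A short crossing argument shows these are the only fountains of $T$ (a second right fountain, at an integer $\neq b$ or at $-\infty$, produces an arc crossing $\pi_{\max\Pi}$, or a Pr\"ufer curve of large index, or forces $A$ unbounded above against $\Pi\neq\emptyset$), so $a$ and $b$ are uniquely determined. Finally, $\sup A\le\inf\Pi$ gives $a\le b$ when $A,\Pi\neq\emptyset$; if $\Pi=\emptyset$ then $b=-\infty$ and $a\in\ZZ\cup\{-\infty\}$; if $A=\emptyset$ then $a=\infty$ and $b\in\ZZ\cup\{\infty\}$. These are exactly the configurations in the second and third bullets; $T$ is not of the first type since it contains an asymptotic arc $\neq z$, and the second bullet is the case $a\le b$ while the third is $a>b$, so exactly one of the three holds.

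The step I expect to demand the most care is the bookkeeping in this last case: matching the fountains at integers and at $\pm\infty$ to $\sup\Pi$ and $\inf A$ through all the degenerate configurations where $A$ or $\Pi$ is empty, finite, or unbounded on only one side, and especially the two boundary claims $\min\Pi\in A$ and $\max A\in\Pi$, which rule out pathological triangulations such as one containing an adic curve but no Pr\"ufer curve while $A$ is bounded above. Everything else is a routine combination of Lemmas~\ref{lem:fountaincurve} and~\ref{lem:pushitfurther} with elementary crossing arguments.
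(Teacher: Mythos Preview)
Your argument is correct, and it takes a genuinely different route from the paper's.  The paper splits according to whether $T$ is locally finite; you split according to whether $T$ contains a strictly asymptotic arc other than $z$.  In the ``no asymptotic arc'' case the paper argues directly from Lemma~\ref{lem:pushitfurther}(1), whereas you reduce to the known classification of triangulations of $\IG$ and eliminate the fountain alternative via Lemma~\ref{lem:fountaincurve}; this costs you an appeal to \cite[Lemma~3.3]{HJ:weakclustertilting} but makes the step very short.  In the other case the paper shows that any fountain forces a companion fountain on the opposite side and then rules out the forbidden configurations; you instead read off the fountain positions explicitly as $a=\inf A$ and $b=\sup\Pi$ (with the conventions $a=\infty$ if $A=\emptyset$, $b=-\infty$ if $\Pi=\emptyset$), relying on your two boundary claims $\min\Pi\in A$ and $\max A\in\Pi$ to control the degenerate situations.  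These boundary claims are slight strengthenings of Lemma~\ref{lem:pushitfurther}(2), proved by the same compatibility check, and your use of Lemma~\ref{lem:pushitfurther}(1) at $\max\Pi$ to produce the right fountain matches the paper's mechanism.  The payoff of your approach is that the location of the fountains is determined constructively rather than existentially, and the case analysis for the allowed pairs $(a,b)$ becomes a direct reading of $\sup$ and $\inf$; the paper's approach is a bit more self-contained in that it does not invoke the $\IG$ classification.  Your uniqueness sketch is terse but sound, and no more so than the paper's own one-line justification.
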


\begin{proof}

Assume $T$ is locally finite and assume as a contradiction that $T$ contains a strictly asymptotic arc (that is not the generic curve). Without loss of generality assume $\pi_m \in T$, for some $m \in \mathbb{Z}$. By Lemma \ref{lem:pushitfurther}(1) this implies that it contains a right fountain at an integer $n \geq m$ or infinitely many Pr\"ufer curves, contradicting the assumption of local finiteness.

Assume now that $T$ is not locally finite, thus it contains at least one left or right fountain. It is clear that $T$ cannot contain two right (respectively left) fountains at $b \neq b'$ since they would intersect
at infinitely many arcs close to $\infty$ (respectively close to $-\infty$). Thus $T$ has a left fountain
at at most one $a \in \mathbb{Z} \cup \{\pm \infty\}$ and a right fountain at at most one $b \in \mathbb{Z} \cup \{\pm \infty\}$. 
		
Assume $T$ has a right fountain at $b \in \mathbb{Z} \cup \{\pm \infty\}$. If $b \neq -\infty$ this means that $T$ contains a Pr\"ufer $\pi_m$ at some $m \leq b$.
By Lemma \ref{lem:pushitfurther}(2) this implies that it has a left fountain at $\infty$ or it contains an adic $\alpha_n$ for some $n \leq m$. In the latter case,	
Lemma \ref{lem:pushitfurther}(1) implies that it contains a left fountain at some $a \leq n$. On the other hand, if $b = -\infty$, then $T$ contains infinitely many adics
of the form $\alpha_n$ for $n \geq 0$. By Lemma \ref{lem:pushitfurther}(1), $T$ contains a left fountain.

It follows by symmetry that if $T$ contains a left fountain, then it contains a right fountain. Therefore, every triangulation of $\IGC$ is either purely
peripheral or it contains a left fountain at a unique $a \in \mathbb{Z} \cup \{\pm \infty\}$ and a right fountain at a unique 
$b \in \mathbb{Z} \cup \{\pm \infty\}$. If $a \neq \infty$ it follows that $b \geq a$ or $b = -\infty$ since otherwise infinitely many arcs in the right fountain at 
$b$ would intersect infinitely many arcs in the left fountain at $a$. 
If $a = \infty$, then with the same argument we must have $b \neq -\infty$. 

\end{proof}

\subsection{On the combinatorics of modules over a polynomial ring} \label{ssec:poly-rings}
The reason we are particularly interested in triangulations of $\IGC$ is the connection between the combinatorial structure of $\IGC$ and the combinatorial structure of 
the indecomposable objects of the category $\Rep(\Aii)$ of representations over an algebraically closed field 
$\k$ of the quiver 
$$\overleftarrow{\mathbb{Z}}=\Aii: \cdots \lf -2 \lf -1 \lf 0 \lf 1 \lf 2 \lf \cdots$$
whose vertices are labelled by the integers and where there are arrows $i-1 \lf i$ for any $i \in \Z$.
		
We denote by $\rep(\Aii)$ the full subcategory of $\Rep(\Aii)$ formed by the finite-dimensional representations. An indecomposable object in $\rep(\Aii)$ is isomorphic to a representation 
of the form $M_{ij}$ with $i,j \in \Z$ and $i \leq j-2$ where $M_{ij}$ is one-dimensional at each of the vertices $i+1, \ldots, j-1$ and where all the maps between non-zero vector spaces are the identity.

For any $i \in \Z$, we have injections
\[
 M_{i,i+2} \hookrightarrow M_{i,i+3} \hookrightarrow M_{i,i+4} \hookrightarrow \ldots
\]
The colimit of this system is the indecomposable representation $\Pi_i \in \Rep(\Aii)$ which is one-dimensional at each of the vertices in $[i+1,+\infty[$ and where all the maps between non-zero vector spaces are identities. The representation $\Pi_i$ is called the \emph{Pr\"ufer module} at vertex $i$.
		
Dually, for any $i \in \Z$, we have surjections
\[
 \ldots \twoheadrightarrow M_{i-4,i} \twoheadrightarrow M_{i-3,i} \twoheadrightarrow M_{i-2,i}.
\]
The limit of this system is the indecomposable representation $A_i \in \Rep(\Aii)$ which is one-dimensional at each of the vertices in $]-\infty,i-1]$ and where all the maps between non-zero vector spaces are identities. The representation $A_i$ is called the \emph{adic module} at vertex $n$.
		
We also have surjections 
$$\cdots \twoheadrightarrow \Pi_{i-1} \twoheadrightarrow \Pi_{i} \twoheadrightarrow \Pi_{i+1} \twoheadrightarrow \cdots$$
The limit of this system is the indecomposable representation $G \in \Rep(\Aii)$ which is one-dimensional at each vertex in $\Z$ and where all the maps between non-zero vector spaces are identities. The representation $G$ is called the \emph{generic module}.

We denote by $\II$ the set of (isomorphism classes of) indecomposable finite-dimensional representations of $\Aii$ together with the indecomposable Pr\"ufer, adic and generic modules. Then there is a natural bijection 
\[
 \Phi: \left\{\begin{array}{rcll}
	\b\A(\IG) & \fl & \II \\
	(i,j) & \mapsto & M_{ij} & \text{ for any } i \leq j-2 \in \Z~;\\
	\pi_i & \mapsto & \Pi_i & \text{ for any } i \in \Z~;\\
	\alpha_i & \mapsto & A_i & \text{ for any } i \in \Z~;\\
	z & \mapsto & G.
\end{array}\right.
\]

\begin{rmq}
Under the bijection $\Phi$, triangulations of $\IGC$ correspond to maximal rigid subcategories of $\Rep(\Aii)$. 
This follows from \cite[Section 5]{BBM:torsiontubes} and the observation that the generic curve $z$ is compatible with any other curve. 
\end{rmq}

\section{Mutations of triangulations}

At the heart of cluster combinatorics arising from triangulations of the $\infty$-gon $C_{\infty}$ lies the concept of mutation.

\begin{defi}
Let $T$ be a triangulation of an infinitely marked surface $(S,M)$. 
We say that an arc $\theta \in T$ is {\em mutable} if and only if there exists an arc $\theta' \neq \theta$ in $\A(S,M)$
such that 
\[
  \mu^T_{\theta}(T)= (T \setminus \{\theta\}) \cup \{\theta'\}
\]
is a triangulation of $(S,M)$. We call $\mu^T_{\theta}(T)$ the {\em mutation of $T$ at $\theta$}. We will use the following notation: For $\gamma \in T$ we set
\[
  \mu^T_{\theta}(\gamma) = \begin{cases}
			  \gamma \; \text{if} \; \gamma \neq \theta\\
			  \theta' \; \text{if}\; \gamma = \theta.
			\end{cases}
\]
Usually, the triangulation in which we mutate will be clear from context and we will omit the superscript and just write $\mu_{\theta}(T)$ and $\mu_{\theta}(\gamma)$ for $\mu^T_{\theta}(T)$ and $\mu^T_{\theta}(\gamma)$ respectively.

\end{defi}

\begin{rmq}\label{R:mutable}
Let $T$ be a triangulation of $(S,M)$.
It is straightforward to check that an arc $\gamma \in T$ is mutable if and only if $\gamma$ is a diagonal in a quadrilateral with edges in $T \cup \E(S,M)$ (cf.\ also \cite[Section~3]{FST:markedsurfaces}),
and that its mutation is given by the other diagonal $\gamma' \neq \gamma$ in the quadrilateral. We call the set 
\[
S(\gamma) = \{\text{sides of the quadrilateral with diagonal $\gamma$}\} \cap \A(S,M) \subseteq T
\]
the {\em quadrilateral in $T$ with diagonal $\gamma$}. 

By abuse of notation we will more generally call an arc $\gamma$ in a set $N$ of compatible arcs of $(S,M)$
mutable, if $S(\gamma) \subseteq N$. With the notations as above we write $\mu_\gamma(N) = (N \setminus \{\gamma\}) \cup \gamma'$.

\end{rmq}

Note that if $\alpha$ and $\beta$ are mutable arcs in a triangulation $T$ of $(S,M)$ then
$\alpha \notin \{\beta\} \cup S(\beta)$ if and only if $\beta \notin \{\alpha\} \cup S(\alpha)$. The following lemma will be useful throughout the paper.

\begin{lem}\label{L:commutativity}
Let $T$ be a triangulation of $(S,M)$.
Let $\alpha, \beta \in T$ be mutable and assume $\alpha \notin \{\beta\} \cup S(\beta)$. Then $\alpha$ is mutable in $\mu_\beta(T)$ and $\beta$ is mutable in $\mu_{\alpha}(T)$ and for all $\gamma \in T$ we have
\[
  \mu_\beta \circ \mu_\alpha(\gamma) = \mu_\alpha \circ \mu_\beta(\gamma).
\]

\end{lem}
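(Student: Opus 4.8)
The plan is to reduce the statement to a local, purely combinatorial fact about the quadrilaterals attached to $\alpha$ and $\beta$, and then check the three possible cases for what a given $\gamma\in T$ can be. First I would observe that the hypothesis $\alpha\notin\{\beta\}\cup S(\beta)$ is, by the remark immediately preceding the lemma, equivalent to $\beta\notin\{\alpha\}\cup S(\alpha)$; thus the two arcs play symmetric roles, and it suffices to prove the commutation formula once. Since $\alpha$ is mutable in $T$, its quadrilateral $S(\alpha)\subseteq T$ has all four sides in $T\cup\E(S,M)$; as $\beta\notin\{\alpha\}\cup S(\alpha)$, the arc $\beta$ survives untouched in $\mu_\alpha(T)$ and, crucially, none of the four sides of $S(\alpha)$ is $\beta$. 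Conversely none of the four sides of $S(\beta)$ is $\alpha$.

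The key step is to show $S_{\mu_\beta(T)}(\alpha)=S(\alpha)$ and $S_{\mu_\alpha(T)}(\beta)=S(\beta)$, i.e.\ mutating at $\beta$ does not disturb the quadrilateral witnessing that $\alpha$ is mutable (and symmetrically). For this I would argue that the sides of $S(\alpha)$ are arcs of $T$ distinct from $\beta$, hence still lie in $\mu_\beta(T)=(T\setminus\{\beta\})\cup\{\beta'\}$, so $S(\alpha)\subseteq \mu_\beta(T)$, which by the abuse-of-notation convention in Remark~\ref{R:mutable} already shows $\alpha$ is mutable in $\mu_\beta(T)$. That $S(\alpha)$ is still the \emph{unique} quadrilateral in the new triangulation with diagonal $\alpha$ follows from the fact that in any triangulation the quadrilateral with a given mutable diagonal is determined by that diagonal together with the ambient triangulation's maximality; alternatively, one notes that $\mu_\beta$ only changes the pair of triangles incident to $\beta$, and since $\beta$ is not a side of $S(\alpha)$, the two triangles bounding $\alpha$ are unaffected. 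Consequently the mutation $\mu_\beta(T)$-mutation of $\alpha$ replaces $\alpha$ by the same other diagonal $\alpha'$ as the $T$-mutation does, and symmetrically $\mu_\alpha(T)$-mutation of $\beta$ yields $\beta'$.

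With this in hand the formula $\mu_\beta\circ\mu_\alpha(\gamma)=\mu_\alpha\circ\mu_\beta(\gamma)$ is a three-case check for $\gamma\in T$: if $\gamma=\alpha$ then both sides equal $\alpha'$ (on the left, $\mu_\alpha$ sends $\alpha\mapsto\alpha'$ and then $\mu_\beta$ fixes $\alpha'$ since $\alpha'\neq\beta$ and $\alpha'$ is not a side of $S(\beta)$; on the right, $\mu_\beta$ fixes $\alpha$ and then $\mu_\alpha$ sends it to $\alpha'$); the case $\gamma=\beta$ is symmetric, giving $\beta'$ on both sides; and if $\gamma\notin\{\alpha,\beta\}$ then, as $\gamma$ is neither the mutated arc nor the newly created arc at either step, both composites fix $\gamma$. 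The one point requiring a small argument is why $\alpha'\neq\beta$ and why $\alpha'\notin S(\beta)$ (so that $\mu_\beta$ genuinely fixes $\alpha'$): this is because $\alpha'$ crosses $\alpha$, whereas every arc of $\{\beta\}\cup S(\beta)$ is compatible with $\alpha$, so $\alpha'$ cannot be among them.

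The main obstacle is the uniqueness-of-quadrilateral point: one must be slightly careful that ``the quadrilateral in $T$ with diagonal $\gamma$'' is genuinely well defined and stable under mutations that do not touch its sides, since in the infinitely marked (non-discrete) setting triangulations need not cut the surface into honest triangles. I expect this to be handled by the same local reasoning already invoked in Remark~\ref{R:mutable}: mutability is detected entirely by the finite configuration $S(\gamma)\cup\{\gamma\}$ together with the edges of $(S,M)$, and a mutation at an arc outside this configuration changes none of its members, hence preserves it verbatim. Everything else is bookkeeping.
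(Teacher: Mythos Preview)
Your argument is correct and follows essentially the same route as the paper's proof: both show that $\{\alpha\}\cup S(\alpha)$ survives intact in $\mu_\beta(T)$ (and symmetrically), deduce that the mutation of $\alpha$ there still produces the same $\alpha'$, use the crossing argument to get $\alpha'\neq\beta$ (and $\beta'\neq\alpha$), and finish with the three-case check on $\gamma$. One small remark: your extra claim that $\alpha'\notin S(\beta)$ is not needed, since $\mu_\beta$ only replaces $\beta$ by $\beta'$ and hence fixes every arc other than $\beta$; the condition $\alpha'\neq\beta$ alone suffices.
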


\begin{proof}
 Since $\alpha \in T$ and $S(\alpha) \subseteq T$ and $\beta \notin \{\alpha\} \cup S(\alpha)$ we have $\gamma = \mu^T_{\beta}(\gamma) \in \mu_\beta(T)$ for all $\gamma \in \{\alpha\} \cup S(\alpha)$. In particular, $\alpha \in \mu^T_\beta(T)$ is mutable. Analogously, $\beta \in \mu^T_\alpha(T)$ is mutable. Let $\alpha' \neq \alpha$ be the other diagonal in $S(\alpha)$ and let $\beta' \neq \beta$ be the other diagonal in $S(\beta)$. Because $S(\alpha) \subseteq \mu^T_\beta(T)$ we have $\mu^{\mu^T_\beta(T)}_\alpha(\alpha) = \alpha' =  \mu^T_\alpha(\alpha) $ and since $S(\beta) \subseteq \mu^T_\alpha(T)$ we have $\mu^{\mu^T_\alpha(T)}_\beta(\beta) = \beta'=  \mu^T_\beta(\beta)$. Since $\beta$ and $\beta'$ intersect, but $\alpha$ and $\beta$ do not, we have $\alpha \neq \beta'$ and analogously $\beta \neq \alpha'$. Therefore we obtain 
 
 \[
  \mu_\alpha \circ \mu_\beta(\alpha) = \mu^{\mu^T_{\beta}(T)}_\alpha(\alpha) = \alpha' = \mu^{\mu^T_{\alpha}(T)}_\beta(\alpha') = \mu_\beta \circ \mu_\alpha(\alpha)
 \]
 and symmetrically
 \[
  \mu_\alpha \circ \mu_\beta(\beta) = \mu_\beta \circ \mu_\alpha(\beta).
 \]
 Clearly, for all $\gamma \in T \setminus \{\alpha,\beta\}$ we have $\mu_\alpha \circ \mu_\beta (\gamma) = \gamma = \mu_\beta \circ \mu_\alpha(\gamma)$, which proves the claim.

\end{proof}

\begin{lem}\label{L:mutable1}
 Assume $T$ is a triangulation of $(S,M)$ and $\alpha \in T$ is mutable. Then $\gamma$ is mutable in $T$ if and only if $\mu_{\alpha}(\gamma)$ is mutable in $\mu_{\alpha}(T)$.
\end{lem}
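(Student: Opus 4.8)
The plan is to exploit the quadrilateral characterization of mutability from Remark~\ref{R:mutable}: an arc $\gamma \in T$ is mutable precisely when its quadrilateral $S(\gamma)$ is contained in $T$. Since $\mu_\alpha$ is an involution on triangulations, it suffices to prove one implication, say that if $\gamma$ is mutable in $T$ then $\mu_\alpha(\gamma)$ is mutable in $\mu_\alpha(T)$; the converse follows by applying this to $\mu_\alpha(T)$ and $\mu_\alpha(\gamma)$, using $\mu_\alpha(\mu_\alpha(T)) = T$ and $\mu_\alpha(\mu_\alpha(\gamma)) = \gamma$. So assume $\gamma \in T$ is mutable with quadrilateral $S(\gamma) \subseteq T$.

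First I would dispose of the case $\gamma = \alpha$: here $\mu_\alpha(\gamma) = \alpha'$ is the other diagonal of the same quadrilateral $S(\alpha)$, and since $S(\alpha) \subseteq T$ consists of arcs fixed by $\mu_\alpha$ (none of them equals $\alpha$, and none is the flipped arc $\alpha'$ since $\alpha'$ intersects $\alpha$ while the sides do not), we get $S(\alpha) = S(\alpha') \subseteq \mu_\alpha(T)$, so $\alpha'$ is mutable in $\mu_\alpha(T)$. Next, the case $\alpha \notin \{\gamma\} \cup S(\gamma)$ is exactly covered by Lemma~\ref{L:commutativity}, which already asserts $\gamma$ is mutable in $\mu_\alpha(T)$ (and $\mu_\alpha(\gamma) = \gamma$ in this case). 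The remaining case is $\alpha \in S(\gamma)$, i.e.\ $\alpha$ is one of the sides of the quadrilateral with diagonal $\gamma$; here $\mu_\alpha(\gamma) = \gamma$ as well, but $\gamma$'s quadrilateral changes when we flip the side $\alpha$.

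The heart of the argument is therefore this last case, and I expect it to be the main obstacle. When $\alpha$ is a side of $S(\gamma)$, flipping $\alpha$ to $\alpha'$ replaces the quadrilateral having $\gamma$ as a diagonal and $\alpha$ as a side by a new quadrilateral: concretely, $\alpha$ together with $\gamma$ bounds (on the side of $\alpha$ away from $\gamma$) a triangle in $T$ whose third side is some arc or edge $\delta$; after flipping, $\alpha'$ is the diagonal of the pentagon-shaped region, and one checks that $\gamma$ is still a diagonal of a quadrilateral, now with sides obtained from $S(\gamma)$ by deleting $\alpha$ and inserting $\alpha'$ and possibly adjusting with $\delta$. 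I would carry this out by drawing the local picture: $\gamma$ lies in the union of two triangles $\Delta_1, \Delta_2$ of $T$ forming $S(\gamma)$'s quadrilateral, and $\alpha$ is an edge of one of them, say $\Delta_1 = \{\gamma, \alpha, \delta\}$; then $\alpha$ is in turn an edge of $\Delta_1$ and of the triangle $\Delta_1'$ on its other side, and $\alpha' $ is the diagonal of the quadrilateral $\Delta_1 \cup \Delta_1'$. The key verification is that in $\mu_\alpha(T)$ the arc $\gamma$ bounds, together with the still-present arcs $\delta$, the far side of $S(\gamma)$, and $\alpha'$, a genuine quadrilateral all of whose relevant sides lie in $\mu_\alpha(T)$ — this is a purely local, finite check in the surface, and once the picture is set up it is routine. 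One should also note the degenerate subcases where $\delta$ is an edge of $(S,M)$ rather than an arc, or where two of the sides coincide; these do not affect mutability. Assembling the three cases gives the forward implication, and involutivity gives the theorem.
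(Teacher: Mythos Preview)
Your case split (on $\gamma=\alpha$, $\alpha\notin S(\gamma)$, $\alpha\in S(\gamma)$) and local-quadrilateral analysis is exactly the paper's approach. One slip in the third case: with $\Delta_1=\{\gamma,\alpha,\delta\}$ and $\Delta_1'$ the triangle on the far side of $\alpha$, after the flip the triangle on that side of $\gamma$ becomes $\{\gamma,\alpha',\beta'\}$ where $\beta'$ is a side of $\Delta_1'$, so the new quadrilateral for $\gamma$ is $\{\alpha',\beta'\}$ together with the two sides of $\Delta_2$ --- your $\delta$ is \emph{not} ``still present'' as a side (it now lies across $\alpha'$ from $\gamma$), which is precisely what the paper records as $S(\mu_\alpha(\gamma))=\{\alpha',\beta',\delta,\epsilon\}\cap\A(S,M)$ with their $\delta,\epsilon$ being the sides of your $\Delta_2$.
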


\begin{proof}
 The statement is clear if $\gamma = \alpha$. Assume thus $\gamma \neq \alpha$ and let $S(\gamma)$ be the quadrilateral in $T$ with diagonal $\gamma$. If  $\alpha \notin S(\gamma) \subseteq T$, then we still have $\{\gamma\} \cup S(\gamma) \subseteq T'$, and $\gamma$ is mutable. Otherwise, if $\alpha \in S(\gamma)$, then $\alpha$ and $\gamma$ are the sides of a common triangle with sides $\alpha, \beta, \gamma$ in $T \cup \E(S,M)$ and $S(\gamma) = \{\alpha,\beta,\delta,\epsilon\} \cap \A(S,M)$ for some $\delta, \epsilon \in T \cup \E(S,M)$. Since $\alpha'$ is still a diagonal in $S(\alpha) \subseteq \mu_{\alpha}(T)$ and $\gamma = \mu_{\alpha}(\gamma) \in S(\alpha)$, the arcs $\alpha'$ and $\mu_{\alpha}(\gamma)$ are sides of a common triangle with sides $\alpha', \beta', \gamma$ in $\mu_{\alpha}(T) \cup \E(S,M)$ and we have a quadrilateral $S(\mu_{\alpha}(\gamma)) = \{\alpha', \beta', \delta, \epsilon\} \cap \A(S,M)$ in $\mu_{\alpha}(T)$ with diagonal $\mu_{\alpha}(\gamma)$. 
\end{proof}

\subsection{Mutations in the infinity-gon} If $T$ is a triangulation of $\IG$ either all arcs or all arcs but one are mutable. 

\begin{defi}
 Let $T$ be a triangulation of $\IG$. We say that an arc $(a,b) \in T$ {\em connects a split fountain} if there is a left fountain at $a$ and a right fountain at $b$ in $T$.
\end{defi}

	\begin{prop}\label{prop:mutIG}
		Let $T$ be a triangulation of $\IG$ and let $\theta \in T$. Then $\theta$ is mutable if and only if does not connect a split fountain. 
	\end{prop}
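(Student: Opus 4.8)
The plan is to characterize mutability of an arc $\theta = (a,b) \in T$ via Remark~\ref{R:mutable}: $\theta$ is mutable if and only if it is the diagonal of a quadrilateral whose four sides lie in $T \cup \E(\IG)$. So I need to understand the ``local picture'' of $T$ around $\theta$. On each side of $\theta$ (i.e.\ among the arcs $(i,j)$ with $a \le i < j \le b$ other than $\theta$, lying ``above'' $\theta$, and similarly ``below''), since $T$ is a triangulation and hence maximal, $\theta$ together with $T \cup \E(\IG)$ bounds a region; I want to show this region is bounded above by exactly one arc/edge and below by exactly one arc/edge precisely when no fountain obstruction occurs. Concretely, above $\theta$ there is a maximal element: look at $\{c \mid a < c < b, (a,c) \in T \cup \E(\IG)\}$ and $\{d \mid a < d < b, (d,b) \in T \cup \E(\IG)\}$. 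The arc $\theta$ is mutable iff there exist $c, d$ with $a \le c$, $d \le b$, $c \le d$ (allowing $c=a$ or $d=b$ to be endpoints giving edges), such that $(a,d), (c,b) \in T\cup\E(\IG)$ and $(c,d) \in T \cup \E(\IG)$ — that is, iff a quadrilateral $a, c, d, b$ (or a triangle, if $c=d$ or one side degenerates to an edge) closes up on the upper side, and symmetrically on the lower side.

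First I would handle the ``only if'' direction, which is the easier one: if $\theta = (a,b)$ connects a split fountain, so $T$ contains infinitely many arcs $(p,a)$ with $p \to -\infty$ and infinitely many arcs $(b,q)$ with $q \to +\infty$, then I claim no quadrilateral with diagonal $\theta$ can have all sides in $T \cup \E(\IG)$. Indeed any candidate quadrilateral has vertices $c_1 < a < b < c_2$ (the vertices of the two triangles adjacent to $\theta$ on its two sides in $\IG$ — note that in $\IG$ the two ``sides'' of $\theta$ correspond to going around below, since $\theta$ cuts the line), with sides $(c_1,a), (a, c_2)$ — wait, more carefully: in $\IG$ the region on one side of $(a,b)$ contains all points strictly between $a$ and $b$, and the other side contains all points outside. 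The triangle on the ``outer'' side would need a third vertex $c$ with $c < a$ or $c > b$, giving sides $(c,a)$ and $(c,b)$ — impossible if $a < c$ is needed; the relevant obstruction is that on the outer side, a left fountain at $a$ forces the arcs $(p,a)$ to accumulate so that no single arc $(c,b)$ with $c \le a$ can be in $T$ (it would be crossed, or the region is not a quadrilateral). I will make this precise: a left fountain at $a$ together with a right fountain at $b$ means the ``outer'' region adjacent to $\theta$ is bounded by infinitely many arcs, not finitely many, hence $\theta$ is not the diagonal of a quadrilateral, hence not mutable.

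For the ``if'' direction, suppose $\theta = (a,b)$ does not connect a split fountain. By the quoted Lemma (\cite[Lemma~3.3]{HJ:weakclustertilting}) and the fountain structure of $T$, at least one of the following holds: $T$ has no left fountain at $a$, or $T$ has no right fountain at $b$. I would show that in either case the region adjacent to $\theta$ on the outer side is bounded by finitely many arcs/edges, and then use a finiteness/induction argument: on the inner side (points between $a$ and $b$), there are only finitely many marked points, so by maximality the inner region is a single triangle $a,c,b$ with $(a,c),(c,b) \in T\cup\E(\IG)$; on the outer side, absence of (say) a left fountain at $a$ means $\{p < a \mid (p,a) \in T \cup \E(\IG)\}$ is bounded below, so there is a smallest such $p_0$, and then the triangle containing $\theta$ on the outer side has third vertex $p_0$ (or symmetrically on the $b$-side), giving the fourth side of the quadrilateral. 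Assembling the inner triangle and the outer triangle yields a quadrilateral with diagonal $\theta$ and all sides in $T \cup \E(\IG)$, so $\theta$ is mutable.

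The main obstacle I anticipate is the bookkeeping in the ``if'' direction: carefully arguing from ``no left fountain at $a$'' (a statement about arcs $(p,a)$ with $p$ ranging over an infinite set) to ``the outer region adjacent to $\theta$ is a single triangle,'' since a priori even without a fountain at $a$ the point $a$ could be an endpoint of arcs going the other direction, and one must use maximality of $T$ together with the compatibility criterion ($(i,j),(k,l)$ incompatible iff $i<k<j<l$ or $k<i<l<j$) to pin down exactly which arc forms the outer side. I also need to treat the degenerate cases where a ``side'' of the quadrilateral is an edge of $\IG$ rather than an arc (e.g.\ when $b = a+2$, so the inner triangle degenerates, or when $\theta$ is adjacent to the ``ends'' — though in $\IG$ there are no ends), and the case where $T$ has a fountain but not a split one (e.g.\ a right fountain at $b$ but no left fountain at $a$, where $a$ itself might carry a left fountain that is part of a split fountain connecting a *different* arc). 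These are routine but require care to state cleanly.
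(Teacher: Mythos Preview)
Your overall strategy---characterize mutability via the existence of a bounding quadrilateral, and show that the outer triangle fails precisely when $\theta$ connects a split fountain---is sound and matches the paper's approach in spirit. The ``only if'' direction is essentially correct: if $\theta=(a,b)$ connects a split fountain, any arc crossing $(a,b)$ crosses infinitely many arcs of one of the two fountains, so no replacement is possible. The paper argues this the same way.

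However, your ``if'' direction contains a concrete error. You claim that if there is no left fountain at $a$, then taking $p_0 = \min\{p < a \mid (p,a) \in T \cup \E(\IG)\}$ gives the third vertex of the outer triangle. This is false: one would also need $(p_0,b) \in T \cup \E(\IG)$, which is not guaranteed. For instance, in $t_{lf} = \{(-k,k),(-k,k+1) \mid k \geq 1\}$, take $\theta = (-1,1)$. Then $p_0 = -2$ (from the edge $(-2,-1)$), but $(-2,1) \notin t_{lf} \cup \E(\IG)$; the actual outer triangle has third vertex $2 > b$, coming from the arc $(-1,2) \in t_{lf}$ and the edge $(1,2)$. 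Your parenthetical ``or symmetrically on the $b$-side'' does not help, since you give no mechanism for deciding which side supplies the vertex.

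The right replacement for the $p_0$ step is to show that whenever $(a,b)$ does not connect a split fountain, there exists an arc $(c,d) \in T$ with $c \leq a < b \leq d$ and $(c,d) \neq (a,b)$; then $(a,b)$ lies inside a finite subpolygon of $T$ and the outer triangle exists automatically. The paper does exactly this but outsources the work: for $T$ locally finite or with a non-split fountain it invokes \cite[Lemmas~3.4 and~3.6]{HJ:weakclustertilting} directly, and in the split-fountain case it explicitly exhibits the covering arc (from the left fountain if $j \leq a$, from the right fountain if $i \geq b$, and the connecting arc $(a,b)$ itself if $a \leq i < j \leq b$), then again cites \cite[Lemma~3.6]{HJ:weakclustertilting}. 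Your self-contained route can be made to work once the covering-arc step is fixed, but the paper's proof via the Holm--J{\o}rgensen lemmas is considerably shorter.
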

	\begin{proof}
	If $T$ is locally finite or if it has a right and a left fountain at some $a \in \ZZ$, then by \cite[Lemmas~3.4 and~3.6]{HJ:weakclustertilting}, every arc is mutable. 
	
	On the other hand assume that $T$ has a split fountain, with a left fountain at $a \in \ZZ$ and a right fountain at $b\in \ZZ$. We show that the arc $(a,b)$ is the only non-mutable arc. Indeed, it is not mutable since every arc that intersects $(a,b)$ intersects infinitely many arcs in the right fountain at $b$ or the left fountain at $a$, therefore we cannot replace $(a,b)$ by another arc to obtain again a triangulation. We now show that every other arc is mutable: Every arc in $T \setminus \{(a,b)\}$ is of the form $(i,j) \neq (a,b)$ with $i < j \leq a$ or $b \leq i < j$ or $a \leq i < j \leq b$. If $i < j \leq a$, then there is an arc $(l,a) \in T$ with $l < i < j \leq a$, if $b \leq i < j$ then there is an arc $(b,k) \in T$ with $b \leq i < j < k$ and in the final case we have $a < i < j \leq b$ or $a \leq i < j < b$ with $(a,b) \in T$. In either case, it follows by \cite[Lemma~3.6]{HJ:weakclustertilting} that the arc $(i,j)$ is mutable.
	\end{proof}

\subsection{Mutations in the completed infinity-gon}
		
We will see that it is always possible to mutate triangulations of $\IGC$ at peripheral arcs. However, for strictly asymptotic arcs, the situation is slightly more complicated.

\begin{defi}[Arcs wrapping a fountain]
	Let $T$ be a triangulation of $\IGC$. We say that an arc $\gamma$ in $T$ is {\em wrapping a fountain in $T$} if $T$ contains a left (or right, respectively) 
	fountain at $m$ and $\gamma=\alpha_m$ (or $\gamma=\pi_m$ respectively). 
\end{defi}

\begin{prop}\label{prop:mutIGC}
	Let $T$ be a triangulation of $\IGC$ and let $\theta \in T$. Then $\theta$ is mutable if and only if $\theta$ is neither the generic curve nor wrapping a fountain in $T$.

\end{prop}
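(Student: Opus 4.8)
The plan is to prove the two directions separately, and to exploit the explicit list of incompatibilities for the strictly asymptotic arcs given just after Figure~\ref{fig:arcsInf}, together with Lemmas~\ref{lem:fountaincurve} and~\ref{lem:pushitfurther} and the classification Theorem~\ref{T:classification triangulations of IGC}. For the "only if'' direction, I would first dispatch the generic curve: by Remark~\ref{R:generic} it lies in every triangulation and is compatible with every arc, so it can never be a diagonal of a quadrilateral and is not mutable. Next, suppose $\gamma$ wraps a fountain, say $\gamma = \pi_m$ and $T$ contains a right fountain at $m$ (the adic case being symmetric). Any arc $\theta'$ with $\theta' \neq \pi_m$ that is compatible with all of $T \setminus \{\pi_m\}$ must in particular be compatible with the infinitely many arcs $(m,p) \in T$ with $p$ arbitrarily large in the fountain; running through the incompatibility list for peripheral and strictly asymptotic arcs, such a $\theta'$ would have to be compatible with $\pi_m$ as well, so $T \setminus \{\pi_m\} \cup \{\theta'\}$ is not maximal. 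Hence $\pi_m$ is not mutable. This shows non-mutable implies generic-or-wrapping.

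For the "if'' direction I must show: if $\theta \in T$ is neither the generic curve nor wrapping a fountain, then $\theta$ is mutable, i.e.\ $S(\theta) \subseteq T$ (Remark~\ref{R:mutable}). I would split into cases according to the type of $\theta$. If $\theta$ is a peripheral arc $(i,j)$: since $\theta$ does not wrap a fountain, there are only finitely many arcs of $T$ incident to $i$ on the far side and finitely many incident to $j$ on the far side, and using Theorem~\ref{T:classification triangulations of IGC} to understand the local picture near $i$ and near $j$, one finds the four sides of the quadrilateral with diagonal $\theta$ among the arcs (and edges) of $T$; this is the same bookkeeping as in the proof of Proposition~\ref{prop:mutIG}, with the extra possibility that one or two of the sides is a strictly asymptotic arc $\alpha_i$, $\pi_j$ or the generic curve $z$. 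If $\theta = \pi_m$ and $T$ has \emph{no} right fountain at $m$: by Lemma~\ref{lem:pushitfurther}(1) there is an $m' > m$ with $\pi_{m'} \in T$, and one checks directly that the quadrilateral with diagonal $\pi_m$ has sides among $\{\pi_{m'}, \alpha_m \text{ or an arc }(k,m), \ldots\} \subseteq T$, with the other diagonal a peripheral arc $(m, m')$ (up to adjusting indices); the adic case $\theta = \alpha_m$ is symmetric. The remaining subtle case is $\theta$ a strictly asymptotic arc of the "wrong'' type, e.g.\ $\theta = \pi_m \in T$ while $T$ carries a \emph{left} fountain at $m$ — here $\theta$ is not wrapping (wrapping at $m$ would need a right fountain), and Lemma~\ref{lem:pushitfurther}(2) pins down that $\alpha_m \in T$, which together with the structure near $m$ gives the quadrilateral explicitly.

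The main obstacle I anticipate is the case analysis in the "if'' direction for strictly asymptotic arcs: one must carefully identify the four sides of the quadrilateral having $\theta$ as a diagonal when $\theta$ connects to a limit point, making sure each candidate side genuinely lies in $T \cup \E(\IGC)$, and correctly handle the boundary subcases where $T$ has a fountain at $\pm\infty$ (so that, e.g., the "side'' of the quadrilateral is itself an adic, a Pr\"ufer, or the generic curve). The cleanest route is probably to argue by maximality rather than constructing the quadrilateral by hand: show that among all arcs compatible with $T \setminus \{\theta\}$ there is exactly one arc $\theta' \neq \theta$, using the explicit incompatibility descriptions and Lemmas~\ref{lem:fountaincurve}, \ref{lem:pushitfurther} to rule out "too many'' competitors; then $(T \setminus \{\theta\}) \cup \{\theta'\}$ is automatically a triangulation. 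I would also record, as this is used later, exactly what $\theta'$ is in each case (a peripheral arc, an adic, a Pr\"ufer, or the generic curve), since the description of mutation of strictly asymptotic arcs is presumably needed in the subsequent sections on completed and transfinite mutations.
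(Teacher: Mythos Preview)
Your plan matches the paper's proof closely: the same direction split, the same use of Remark~\ref{R:generic} and Lemma~\ref{lem:fountaincurve} for the ``only if'', and the same case division (peripheral vs.\ strictly asymptotic) for the ``if''. Two small corrections are worth noting.

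First, in the $\theta = \pi_m$ case your ``other diagonal $(m,m')$'' is actually a \emph{side} of the quadrilateral. With $n_1 = \min\{l > m : \pi_l \in T\}$, the two subcases are: (i) if some $\pi_k \in T$ with $k<m$, then with $n_0 = \max\{k<m:\pi_k\in T\}$ the quadrilateral has sides $\pi_{n_0},(n_0,m),(m,n_1),\pi_{n_1}$ and other diagonal $(n_0,n_1)$; (ii) if no such $\pi_k$ exists, then $\alpha_m\in T$ and the sides are $\alpha_m,(m,n_1),\pi_{n_1},z$ with other diagonal $\alpha_{n_1}$. The paper does exactly this split (dually, for $\alpha_m$), citing \cite[Proposition~1.6]{BD:compactifying} for case~(i) rather than writing the quadrilateral out.

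Second, your ``subtle case'' of $\pi_m\in T$ with a \emph{left} fountain at $m$ is not a separate case: Lemma~\ref{lem:fountaincurve} gives $\alpha_m\in T$, and since $\alpha_m$ crosses every $\pi_k$ with $k<m$, this forces subcase~(ii) above. So it is already subsumed.

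For the peripheral case $(i,j)$, the paper does not invoke Theorem~\ref{T:classification triangulations of IGC}; it splits directly on whether some arc $(a,b)\in T$ with $a\le i<j<b$ or $a<i<j\le b$ exists, reducing to the $\IG$ argument of \cite[Lemma~3.6]{HJ:weakclustertilting} if so, and otherwise showing $\{\pi_i,\pi_j\}\subseteq T$ or $\{\alpha_i,\alpha_j\}\subseteq T$ and reading off the quadrilateral. Your plan to follow the $\IG$ bookkeeping with added asymptotic sides amounts to the same thing.
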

\begin{proof}
  Let $T$ be a triangulation of $\IGC$ and let $\theta \in T$. By Remark \ref{R:generic} and Lemma \ref{lem:fountaincurve} if $\theta$ is generic or wrapping a fountain then it is not mutable. 
  
	On the other hand assume $\theta \in T$ is not generic nor wrapping a fountain. Assume first that $\theta$ is strictly asymptotic. Without loss of generality, we assume that 
	$\theta = \alpha_m$ for some vertex $m \in \Z$ where there is no left fountain, the statement for a Pr\"ufer curve follows by symmetry. Then, it follows from Lemma \ref{lem:pushitfurther} that there is an adic arc $\alpha_n$ with 		$n<m$. We let 
	\[
	n_0 = \max \ens{n < m \ | \ \alpha_n \in T}.
	\]
	There are two possibilities. Either there is some $l>m$ such that $\alpha_l \in T$, in which case we set 
	\[
	n_1 = \min \ens{l > m \ | \ \alpha_l \in T}.
	\]
 Then as in the proof of  \cite[Proposition 1.6]{BD:compactifying}, $\theta' = (n_0,n_1)$ is the unique arc distinct from $\theta$ such that $T \setminus \ens{\theta} \sqcup \ens{\theta'}$ is a triangulation of $\IGC$. If there is no adic arc $\alpha_l$ with $l > m$, then $\pi_m$ does not intersect any arc in $T$ and thus $\pi_m \in T$. Therefore $\alpha_m$ is a diagonal in the quadrilateral $S(\alpha_m) = \{\alpha_{n_0}, \pi_m, (n_0,m), z\} \cap \A(\IGC)$ in $T$ and by Remark \ref{R:mutable} it is mutable.
  
  Assume now that $\theta = (i,j)$ is a peripheral arc, thus $-\infty < i < j < \infty$. If there is an arc $(a,b) \in T$ with $a \leq i < j < b$ or $a < i < j \leq b$, then it follows analogously to \cite[Lemma~3.6]{HJ:weakclustertilting} that $(i,j)$ is mutable. 
  
  On the other hand, assume there is no such arc. Then the arcs $\pi_i, \pi_j, \alpha_i, \alpha_j$ do not intersect any peripheral arcs in $T$. If $\pi_i \in T$, then there cannot be an adic $\alpha_k \in T$ with $k \geq i$ and therefore we also have $\pi_j \in T$. If, on the other hand, we have $\pi_i \notin T$, then there must exist an $l > i$ with $\alpha_l \in T$, and since $(i,j) \in T$ must not intersect $\alpha_l \in T$ we even have $l \geq j$. It follows that we cannot have any $\pi_k \in T$ with $k \leq l$ and therefore $\alpha_i$ and $\alpha_j$ do not intersect any strictly asymptotic arcs in $T$ either and thus $\alpha_i, \alpha_j \in T$. 
  
  Therefore we have $\pi_i, \pi_j \in T$ or $\alpha_i, \alpha_j \in T$, without loss of generality assume the former is the case (the latter case follows by symmetry). There exists a $k \in \ZZ$ with $i < k < j$ and $(i,k), (k,j) \in T \cup \E(\IGC)$ and the arc $(i,j)$ is a diagonal in the quadrilateral $S((i,j)) = \{(i,k), (k,j), \pi_i, \pi_j\} \cap \A(\IGC)$ in $T$. It follows by Remark \ref{R:mutable} that $(i,j) \in T$ is mutable.

\end{proof}

\section{Mutations along infinite admissible sequences}

Classically, the exchange graph of a marked surface (with finitely many marked points) is defined as the graph which has as vertices triangulations of the marked surface and as edges diagonal flips. In the finite setting, this exchange graph is connected, in the sense that for any two of its vertices there exists a finite path connecting them. However, if we extend this definition naively to infinitely marked surface, the resulting graph will not be connected anymore.
In particular, triangulations that have very similar structure are not necessarily connected by finite sequences of mutations. Consider for example the two locally finite triangulations
\[
	t_{lf} = \{(-k,k) \mid k \in \mathbb{Z}_{>0}\} \cup \{(-k,k+1) \mid k \in \mathbb{Z}_{>0}\}
\]
and
\[
	t^-_{lf} = \{(-k,k) \mid k \in \mathbb{Z}_{>0}\} \cup \{(-(k+1),k) \mid k \in \mathbb{Z}_{>0}\}
\]
of $\IG$.
They are both locally finite, thus seem to have very similar behaviour under mutation, however there exists no finite sequence of mutations from $t_{lf}$ to $t^-_{lf}$. We are however able to connect (these particular) triangulations via mutations, if we consider mutations along possibly infinite admissible sequences.

\begin{defi}\label{D:infinite mutation}
	Let $T$ be a triangulation of an infinitely marked surface $(S,M)$
	and let $I$ be a countable indexing set, for notational simplicity throughout this paper we take $I = \{1, \ldots, n\}$ if it is finite and $I = \mathbb{Z}_{> 0}$ if it is infinite. A sequence of arcs $\underline{\theta} = (\theta_i)_{i \in I}$ is called
	{\em $T$-admissible} if it satisfies the following:
	\begin{enumerate}
		\item{$\theta_1$ is mutable in $T$}
		\item{For all $1 \neq i \in I$, the arc $\theta_i$ is mutable in $\mu_{\theta_{i-1}} \circ \ldots \circ \mu_{\theta_1}(T)$.}
		\item{For all $\gamma \in T$ there exists an $l_\gamma \in I$ such that for all $k \geq l_\gamma$ we have
			\[
				\mu_{\theta_{k}} \circ \ldots \circ \mu_{\theta_1}(\gamma) = \mu_{\theta_{l_\gamma}} \circ \ldots \circ \mu_{\theta_1}(\gamma).
			\]
			}
	\end{enumerate}
For each arc $\gamma \in T$ we define the {\em mutation of $\gamma$ along $\underline{\theta}$} to be 
$\mu^T_{\underline{\theta}}(\gamma) = \mu_{\theta_{l_\gamma}} \circ \ldots \circ \mu_{\theta_1}(\gamma)$, where $l_{\gamma}$ is as in (3). We set 
	\[
		\mu_{\underline{\theta}}(T) = \{\mu^T_{\underline{\theta}}(\gamma) \mid \gamma \in T\}
	\]
and call it the {\em mutation of $T$ along $\underline{\theta}$.}
\end{defi}

If it is clear from context, we will usually omit the superscript and simply write $\mu_{\underline{\theta}}(\gamma)$ for $\mu^T_{\underline{\theta}}(\gamma)$.

\begin{exmp}\label{E:not a triangulation}
 The mutation of a triangulation $T$ along a $T$-admissible sequence is not necessarily a triangulation. Consider for example the triangulation
 \[
  t(0,0) = \{(0,k) \mid k \in \ZZ_{\geq 2}\} \cup \{(-k,0) \mid k \in \ZZ_{\geq 2}\} \cup \{\alpha_0\} \cup \{\pi_0\}
 \]
 of $\IGC$ and the $t(0,0)$-admissible sequence $\underline{\theta} = ((0,i))_{i \geq 2}$. We have 
 \[
  \mu_{\underline{\theta}}(t(0,0)) = \{(1,k) \mid k \in \ZZ_{\geq 3}\} \cup \{(-k,0) \mid k \in \ZZ_{\geq 2}\} \cup \{\alpha_0\} \cup \{\pi_0\}
 \]
 which is not a triangulation of $\IGC$: the arc $\pi_1$ does not intersect any arc in $\mu_{\underline{\theta}}(t(0,0))$, yet it is not contained in $\mu_{\underline{\theta}}(t(0,0))$.
\end{exmp}

\begin{rmq}\label{R:infinite mutation}

However, the mutation of a triangulation along a $T$-admissible sequence consists of mutually non-intersecting arcs: For any pair of arcs $\beta_1, \beta_2 \in T$ there exists a $k \in \mathbb{Z}$ such that 
$\mu_{\underline{\theta}}(\beta_i) = \mu_{\theta_k} \circ \ldots \circ \mu_{\theta_1}(\beta_i)$ for $i = 1,2$. Since $\mu_{\theta_k} \circ \ldots \circ \mu_{\theta_1}(T)$ is a triangulation, 
$\beta_1$ and $\beta_2$ do not intersect.

Moreover, $\mu_{\underline{\theta}}(T)$ always remains infinite: it follows directly from Definition \ref{D:infinite mutation} that $\mu_{\underline{\theta}}(\gamma) \neq \mu_{\underline{\theta}}(\gamma')$ for all $\gamma \neq \gamma' \in T$.
\end{rmq}

\begin{rmq}\label{R:no mutation}
 If $T$ and $T'$ are triangulations of $(S,M)$ and if there is an arc $\gamma \in T'$ that intersects infinitely many arcs in $T$, then there is no $T$-admissible sequence $\underline{\theta} = (\theta_i)_{i \in I}$ with $\mu_{\underline{\theta}}(T) = T'$.
 
 Indeed, if there were such a $T$-admissible sequence, then we would have an $i \in I$ such that $\gamma \in \mu_{\theta_i} \circ \ldots \circ \mu_{\theta_1}(T) = T_i$. However, since $T$ and $T_i$ only differ in finitely many arcs, and since $\gamma$ intersects infinitely many arcs in the triangulation $T_i$ this leads to a contradiction.
\end{rmq}

\begin{exmp}\label{E:directed}
 Consider the triangulations
 \[
  t(-\infty,\infty) = \{\pi_k \mid k \geq 0\} \cup \{\alpha_k \mid k \leq 0\} \; \; \text{and} \; \; t(\infty,\infty) = \{\pi_k \mid k \in \ZZ\}
 \]
 of $\IGC$. The $t(-\infty,\infty)$-admissible sequence $\underline{\theta}=(\alpha_{-i})_{i \geq 0}$ takes $t(-\infty,\infty)$ to $t(\infty,\infty)$, that is we have $\mu_{\underline{\theta}}(t(-\infty,\infty)) = t(\infty,\infty)$. However, by Remark \ref{R:no mutation} there is no $t(\infty,\infty)$-admissible sequence of arcs along which we can mutate to take $t(\infty,\infty)$ to $t(-\infty,\infty)$; the arc $\alpha_0 \in t(-\infty,\infty)$ for example intersects the infinitely many arcs $\pi_k \in t(\infty,\infty)$ with $k \leq -1$.
 \

\end{exmp}

\subsection{A preorder on triangulations of an infinitely marked surface}

Evidently, as we have seen in Example \ref{E:directed}, mutation along $T$-admissible sequences is ``directed'' in the sense that we might have a $T$-admissible sequence from a triangulation $T$ to a triangulation $T'$, but no way of mutating back from $T'$ to $T$ along a $T'$-admissible sequence. This naturally leads one to wonder if mutation along $T$-admissible sequences induces some sort of order on the set of triangulations of $(S,M)$. 

\begin{notation}
Let $T$ and $T'$ be triangulations of $(S,M)$. We write $T \leq_{s} T'$ if there is a $T$-admissible sequence $\underline{\theta}$ with $\mu_{\underline{\theta}}(T)=T'$.
\end{notation}

In this section we will show that $\leq_s$ induces a preorder on the set of triangulations of $(S,M)$.
The tricky part is showing transitivity. In the following, we introduce some notion and prove some results which will be very useful for this, and in fact will be used throughout the rest of this paper.

\begin{defi}
	Let $T$ be a triangulation of $(S,M)$ and let $\underline{\theta} = (\theta_i)_{i \in I}$
	be a $T$-admissible sequence. We say that $\underline{\theta}$ {\em leaves $\gamma \in T$ untouched} if $\mu_{\theta_l} \circ \ldots \circ \mu_{\theta_1}(\gamma) = \gamma$ for all $l \in I$.
\end{defi}

\begin{lem}\label{L:untouched equivalent}
 Let $T$ be a triangulation of $(S,M)$. A $T$-admissible sequence $\underline{\theta} = (\theta_i)_{i \in I}$ leaves $\gamma \in T$ untouched if and only if $\theta_i \neq \gamma$ for all $i \in I$.
\end{lem}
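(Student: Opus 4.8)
The plan is to prove both implications of Lemma~\ref{L:untouched equivalent}, the nontrivial direction being that if $\theta_i \neq \gamma$ for all $i \in I$, then $\underline{\theta}$ leaves $\gamma$ untouched. The ``only if'' direction is immediate: if $\underline{\theta}$ leaves $\gamma$ untouched, then for each $l$ we have $\mu_{\theta_l} \circ \cdots \circ \mu_{\theta_1}(\gamma) = \gamma$, but also $\mu_{\theta_{l-1}} \circ \cdots \circ \mu_{\theta_1}(\gamma) = \gamma$; since a single mutation at $\theta_l$ fixes every arc other than $\theta_l$ itself, the only way $\mu_{\theta_l}$ can fix the arc $\gamma$ appearing in $\mu_{\theta_{l-1}} \circ \cdots \circ \mu_{\theta_1}(T)$ is automatic \emph{unless} $\theta_l = \gamma$, in which case it would be replaced by $\theta_l'\neq \gamma$. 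So $\theta_l \neq \gamma$ for all $l$.

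For the ``if'' direction, I would argue by induction on $l \in I$ that $\mu_{\theta_l}\circ \cdots \circ \mu_{\theta_1}(\gamma) = \gamma$, i.e.\ that $\gamma$ survives unchanged in every intermediate triangulation $T_l = \mu_{\theta_l}\circ \cdots \circ \mu_{\theta_1}(T)$. The base case $l=0$ (or $l=1$) is trivial. For the inductive step, suppose $\mu_{\theta_{l-1}} \circ \cdots \circ \mu_{\theta_1}(\gamma) = \gamma$, so $\gamma \in T_{l-1}$. Applying $\mu_{\theta_l}$ to $T_{l-1}$ replaces only the arc $\theta_l$ by its partner diagonal $\theta_l'$ and fixes every other arc of $T_{l-1}$. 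Since by hypothesis $\theta_l \neq \gamma$, the arc $\gamma$ is not the one being replaced, hence $\mu_{\theta_l}(\gamma) = \gamma$ and $\gamma \in T_l$. This closes the induction and shows $\mu_{\theta_l}\circ\cdots\circ\mu_{\theta_1}(\gamma) = \gamma$ for all $l$, which is exactly the statement that $\underline{\theta}$ leaves $\gamma$ untouched.

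The one subtlety to be careful about is that the notation $\mu_{\theta_l}\circ\cdots\circ\mu_{\theta_1}(\gamma)$ involves tracking a specific arc $\gamma$ through the mutation process, not just the label; the point is that once $\gamma$ is known to lie in $T_{l-1}$ and $\theta_l\neq\gamma$, the definition of a single mutation $\mu^{T_{l-1}}_{\theta_l}$ fixes $\gamma$ on the nose by the case distinction in its definition. There is no real obstacle here, since everything reduces to the elementary fact that single mutation changes exactly one arc; the main thing is to state the induction cleanly and invoke the definition of mutation correctly. One should also note that $\theta_l$ being mutable in $T_{l-1}$ (part of $T$-admissibility) is not needed for this argument beyond guaranteeing that $T_l$ is well-defined.
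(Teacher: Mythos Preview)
Your proof is correct and follows essentially the same approach as the paper: the ``if'' direction is handled by the same induction on $l$ using that a single mutation changes exactly one arc, and the ``only if'' direction is the same elementary observation (the paper phrases it via the first index $k$ at which $\gamma$ fails to be fixed, which is just the contrapositive of your direct argument). Your remarks about tracking the arc $\gamma$ through the intermediate triangulations and about the role of mutability are accurate and match the paper's implicit use of the definition of $\mu^{T_{l-1}}_{\theta_l}$.
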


\begin{proof}
 Assume first that $\theta_j \neq \gamma$ for all $j \in I$. Then we have
 $\mu_{\theta_1}(\gamma) = \gamma$ and inductively assuming that $\mu_{\theta_i} \circ \ldots \circ \mu_{\theta_1}(\gamma) = \gamma$ for some $i \geq 1$, we obtain 
 \[
  \mu_{\theta_{i+1}} \circ \ldots \circ \mu_{\theta_1}(\gamma) = \mu^{\mu_{\theta_i} \circ \ldots \circ \mu_{\theta_1}(T)}_{\theta_{i+1}}(\gamma) = \gamma.
 \]
 
 To show the converse, assume that $\underline{\theta}$ does not leave $\gamma \in T$ untouched. Then there exists a $k \in I$ such that 
 \[
  \mu_{\theta_k} \circ \ldots \circ \mu_{\theta_1}(\gamma) \neq \gamma \; \; \text{and} \; \;  \mu_{\theta_i} \circ \ldots \circ \mu_{\theta_1}(\gamma) = \gamma \; \text{for all} \; i \leq k.
 \]
 It follows that 
 \[
  \mu_{\theta_k} \circ \ldots \circ \mu^T_{\theta_1}(\gamma) = \mu^{\mu_{\theta_{k-1}} \circ \ldots \circ \mu^T_{\theta_1}(\gamma)}_{\theta_k}(\gamma) \neq \gamma
 \]
  and therefore $\theta_k = \gamma$. This proves the claim.

\end{proof}

\begin{lem}\label{L:untouched}
  Let $T$ be a triangulation of $(S,M)$ and let $\underline{\theta}$ be a $T$-admissible sequence. Assume that $\delta \in T$ is mutable, and that $\underline{\theta}$ leaves all arcs in $S(\delta) \cup \{\delta\}$ untouched. Then $\underline{\theta}$ is a $\mu_{\delta}(T)$-admissible sequence with $\mu_{\underline{\theta}}(\mu_{\delta}(T)) = \mu_{\delta}(\mu_{\underline{\theta}(T)})$.
\end{lem}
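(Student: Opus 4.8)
The plan is to show that mutating by $\delta$ commutes, in an appropriately strong sense, with every finite truncation of the admissible sequence $\underline{\theta}$, and then to pass to the limit. The key input is Lemma~\ref{L:commutativity}: since $\underline{\theta}$ leaves every arc in $\{\delta\} \cup S(\delta)$ untouched, Lemma~\ref{L:untouched equivalent} tells us $\theta_i \neq \gamma$ for all $i \in I$ and all $\gamma \in \{\delta\} \cup S(\delta)$. I would first argue that this means $\delta \notin \{\theta_i\} \cup S(\theta_i)$ inside the relevant triangulation at each stage: indeed $\delta \neq \theta_i$ is immediate, and if $\delta$ were a side of the quadrilateral of $\theta_i$, then (by the symmetry remark just before Lemma~\ref{L:commutativity}) $\theta_i$ would lie in $\{\delta\} \cup S(\delta)$, contradicting what we just established. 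Also $\theta_i \neq$ the replacement arc $\delta'$, again because $\delta'$ intersects $\delta$ while $\theta_i$ (being compatible with all of $S(\delta) \cup \{\delta\}$ at that stage, hence with $\delta$) does not.

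Next I would set up the inductive statement: for each $k \in I$, writing $T_k = \mu_{\theta_k} \circ \cdots \circ \mu_{\theta_1}(T)$, the arc $\delta$ (more precisely $\mu_{\theta_k}\circ\cdots\circ\mu_{\theta_1}(\delta) = \delta$) is mutable in $T_k$ with quadrilateral $S(\delta)$ still intact, $\underline{\theta}$ restricted to $\{1,\dots,k\}$ is $\mu_\delta(T)$-admissible, and
\[
  \mu_{\theta_k} \circ \cdots \circ \mu_{\theta_1}(\mu_\delta(T)) = \mu_\delta\bigl(\mu_{\theta_k} \circ \cdots \circ \mu_{\theta_1}(T)\bigr).
\]
The base case $k=0$ is trivial. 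For the inductive step, Lemma~\ref{L:commutativity} applied to $\delta$ and $\theta_{k+1}$ in $T_k$ — whose hypotheses hold by the paragraph above — gives that $\theta_{k+1}$ is mutable in $\mu_\delta(T_k)$, that $\delta$ remains mutable in $\mu_{\theta_{k+1}}(T_k)$ with $S(\delta)$ untouched (using Lemma~\ref{L:mutable1} or a direct check that $\theta_{k+1}$ does not disturb $S(\delta)$), and that $\mu_{\theta_{k+1}} \circ \mu_\delta = \mu_\delta \circ \mu_{\theta_{k+1}}$ on $T_k$ arc by arc; combining with the inductive hypothesis yields the displayed identity at level $k+1$, and condition (1)--(2) of admissibility for $\underline\theta$ over $\mu_\delta(T)$.

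It then remains to verify admissibility condition (3) for $\underline{\theta}$ as a $\mu_\delta(T)$-admissible sequence and to identify the limit. For condition (3): an arc of $\mu_\delta(T)$ is either $\delta'$ or an arc $\gamma \in T$ with $\gamma \neq \delta$; since $\theta_i \neq \delta'$ for all $i$, the orbit of $\delta'$ stabilises immediately, and for $\gamma \neq \delta$ the stabilisation follows from condition (3) for $\underline\theta$ over $T$ together with the commutation identity above (the value $\mu_{\theta_k}\circ\cdots\circ\mu_{\theta_1}(\gamma)$ computed in $\mu_\delta(T)$ agrees with $\mu_\delta$ applied to the value computed in $T$, which agrees with it since $\gamma,\delta$ are distinct non-$\theta_i$ arcs and $\delta$ is left untouched). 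Finally, $\mu_{\underline\theta}(\mu_\delta(T)) = \bigcup_\gamma \{\text{stable value}\}$ equals, via the same identity, $\mu_\delta(\mu_{\underline\theta}(T))$, where one checks the bookkeeping on $\delta$ versus $\delta'$: $\delta \in \mu_{\underline\theta}(T)$ maps to $\delta'$, matching $\delta' \in \mu_\delta(T)$ being left fixed. The main obstacle I anticipate is purely organisational: keeping straight the three parallel claims in the induction (mutability of $\delta$ with its quadrilateral intact, admissibility of the truncated sequence, and the commutation identity) and making sure the hypotheses of Lemma~\ref{L:commutativity} are re-verified at each stage, rather than any single hard idea — the content is entirely carried by Lemmas~\ref{L:commutativity} and~\ref{L:mutable1}.
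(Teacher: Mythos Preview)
Your proposal is correct and follows essentially the same approach as the paper: both argue by induction on the truncations of $\underline{\theta}$, using Lemma~\ref{L:commutativity} at each step to commute $\mu_\delta$ past $\mu_{\theta_k}$, then separately handle $\delta'$ (showing $\theta_i \neq \delta'$ since $\delta$ and $\delta'$ intersect while $\theta_i$ is compatible with $\delta$) and the remaining arcs $\gamma \neq \delta$ to verify condition~(3) and deduce the final identity. Your write-up is slightly more explicit about maintaining the inductive invariant that $S(\delta)$ stays intact in $T_k$, but the content is the same.
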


\begin{proof}
Let $\underline{\theta} = (\theta_i)_{i \in I}$. Inductively applying Lemma \ref{L:commutativity} implies that for all $k \in I$ the finite sequence $(\delta, \theta_1, \ldots, \theta_k)$ is $T$-admissible and for all $\gamma \in T$ we have
 \[
   \mu_{\delta} \circ \mu_{\theta_k} \circ \ldots \circ \mu^T_{\theta_1}(\gamma) = \mu_{\theta_k} \circ \ldots \circ \mu_{\theta_1} \circ \mu^T_{\delta}(\gamma).
 \] 
 Therefore, for all $k \in I$ the sequence $\underline{\theta}_k = (\theta_1, \ldots, \theta_k)$ is $\mu_{\delta}(T)$-admissible. Assume that $\delta' = \mu^T_{\delta}(\delta)$. The sequence $\underline{\theta}_k$ leaves $\delta' \in \mu_{\delta}(T)$ untouched: Since $\delta \in T$ and $\delta$ and $\delta'$ intersect, we have $\delta' \neq \theta_1 \in T$. Furthermore, since for all $k \geq 1$ the sequence $\underline{\theta}_k$ leaves $\delta$ untouched, we have $\delta \in \mu^T_{\underline{\theta}_k}(T)$ and therefore $\delta' \neq \theta_{k+1} \in \mu^T_{\underline{\theta}_k}(T)$. It follows from Lemma \ref{L:untouched equivalent} that $\underline{\theta}_k$ as a $\mu_{\delta}(T)$-admissible sequence leaves $\delta'$ untouched. Therefore for all $k \in I$ we have 
 \begin{eqnarray}\label{delta}
  \mu_{\theta_k} \circ \ldots \circ \mu^{\mu_{\delta}(T)}_{\theta_1}(\delta') = \delta'.
 \end{eqnarray}
 
 Consider now $\gamma \in \mu_{\delta}(T)$ with $\gamma \neq \delta'$. Then we have $\gamma \in T \setminus \{\delta\}$. Since $\underline{\theta}$ is $T$-admissible, there exists an $l \in I$ such that for all $k \geq l$ we have
 \[
  \mu_{\theta_k} \circ \ldots \circ \mu^T_{\theta_1}(\gamma) = \mu_{\theta_l} \circ \ldots \circ \mu^T_{\theta_1}(\gamma).
 \]
 Because we have $\gamma \in T \setminus \{\delta\}$ and since $\underline{\theta}$ leaves $\delta \in T$ untouched, for all $k \in I$ we  have $\mu_{\underline{\theta}_k}(\gamma) \in \mu_{\underline{\theta}_k}^T(T) \setminus \{\delta\}$.  It follows that for all $k \geq l$ we have
 \begin{align*}
  \mu_{\theta_k} \circ \ldots \circ \mu^{\mu_\delta(T)}_{\theta_1}(\gamma) 	&= \mu_{\theta_k} \circ \ldots \circ \mu_{\theta_1}(\mu^T_\delta(\gamma))
										= \mu_{\delta} \circ \mu_{\theta_k} \circ \ldots \circ \mu^T_{\theta_1}(\gamma)\\
										&= \mu_{\theta_k} \circ \ldots \circ \mu^T_{\theta_1}(\gamma) 
										= \mu_{\theta_l} \circ \ldots \circ \mu^T_{\theta_1}(\gamma) = \mu^T_{\underline{\theta}}(\gamma). 
 \end{align*}
Therefore $\underline{\theta}$ is $\mu_{\delta}(T)$-admissible with
\[
 \mu^{\mu_{\delta}(T)}_{\underline{\theta}}(\gamma) =  \begin{cases}
								    \delta' \; \text{if} \; \gamma = \delta'\\
 								    \mu^T_{\underline{\theta}}(\gamma) \; \text{otherwise},
 								  \end{cases}
\]
and we have
\[
 \mu_{\underline{\theta}}(\mu_{\delta}(T)) = (\mu_{\underline{\theta}}(T) \setminus \{\delta\}) \cup \{\delta'\} = \mu_{\delta}(\mu_{\underline{\theta}}(T)).
\]
\end{proof}

\begin{lem}\label{L:precomposing}
 Let $T$ be a triangulation of $(S,M)$ and let $\underline{\theta} = (\theta_i)_{i \in I}$ be a $T$-admissible sequence. If $\delta$ is a mutable arc in $\mu_{\underline{\theta}}(T)$ then there exists an $r \in I$ such that for all $l \geq r$ the sequence
  \[
  \underline{\theta} \cup_l (\delta) = (\theta_1, \ldots, \theta_l, \delta, \theta_{l+1}, \theta_{l+2}, \ldots)
  \]
  is a $T$-admissible sequence with $\mu_{\underline{\theta}\cup_l (\delta)}(\gamma) = \mu_{\delta} (\mu_{\underline{\theta}}(\gamma))$ for all $\gamma \in T$.
\end{lem}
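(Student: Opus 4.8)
The plan is to splice the single mutation at $\delta$ into $\underline{\theta}$ at a stage $l$ that is late enough that the whole (finite) quadrilateral $S(\delta)\cup\{\delta\}$ has already reached its final position; beyond that stage the tail of $\underline{\theta}$ never touches $S(\delta)\cup\{\delta\}$, so it commutes with $\mu_\delta$ in the sense of Lemma~\ref{L:untouched}, and the statement follows. Concretely, write $T_k := \mu_{\theta_k}\circ\cdots\circ\mu_{\theta_1}(T)$ with $T_0=T$. Since $\delta$ is mutable in $\mu_{\underline{\theta}}(T)$, the set $S(\delta)\cup\{\delta\}$ is a finite subset of $\mu_{\underline{\theta}}(T)$; by Remark~\ref{R:infinite mutation} the map $\gamma\mapsto\mu_{\underline{\theta}}(\gamma)$ is injective on $T$, so there is a finite $F\subseteq T$ with $\mu_{\underline{\theta}}(F)=S(\delta)\cup\{\delta\}$. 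I would take $r:=\max_{\sigma\in F}l_\sigma$, where $l_\sigma\in I$ is the stabilization index of $\sigma$ from Definition~\ref{D:infinite mutation}(3). For any $l\geq r$ one then has $\mu_{\theta_k}\circ\cdots\circ\mu_{\theta_1}(\sigma)=\mu_{\underline{\theta}}(\sigma)$ for all $\sigma\in F$ and all $k\geq l$; in particular $S(\delta)\cup\{\delta\}=\mu_{\underline{\theta}}(F)\subseteq T_l$, so $\delta$ is a mutable arc of the triangulation $T_l$.

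Next, fix $l\geq r$ and let $\underline{\theta}^{>l}:=(\theta_{l+1},\theta_{l+2},\dots)$, viewed as a sequence of arcs to be mutated starting from $T_l$. It is routine to check from Definition~\ref{D:infinite mutation} for $\underline{\theta}$ that $\underline{\theta}^{>l}$ is $T_l$-admissible and that $\mu^{T_l}_{\underline{\theta}^{>l}}\bigl(\mu_{\theta_l}\circ\cdots\circ\mu_{\theta_1}(\gamma)\bigr)=\mu_{\underline{\theta}}(\gamma)$ for every $\gamma\in T$. The crucial claim is that no term $\theta_{l+j}$ ($j\geq1$) lies in $S(\delta)\cup\{\delta\}$: otherwise, writing $\sigma\in F$ for its $\mu_{\underline{\theta}}$-preimage, we would have $\mu_{\theta_{l+j-1}}\circ\cdots\circ\mu_{\theta_1}(\sigma)=\mu_{\underline{\theta}}(\sigma)=\theta_{l+j}$ (since $l+j-1\geq l_\sigma$), and one further mutation at $\theta_{l+j}$ would replace this arc by a different one, so $\mu_{\theta_{l+j}}\circ\cdots\circ\mu_{\theta_1}(\sigma)\neq\mu_{\underline{\theta}}(\sigma)$, contradicting $l+j\geq l_\sigma$. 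Hence by Lemma~\ref{L:untouched equivalent} the sequence $\underline{\theta}^{>l}$ leaves every arc of $S(\delta)\cup\{\delta\}$ untouched. I expect this to be the only genuinely delicate point: $r$ must be pushed past the stabilization indices of \emph{all} the finitely many preimages of the quadrilateral, after which Definition~\ref{D:infinite mutation}(3) forces the tail to avoid those arcs.

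Finally, apply Lemma~\ref{L:untouched} with $(T,\delta,\underline{\theta})$ replaced by $(T_l,\delta,\underline{\theta}^{>l})$: the tail $\underline{\theta}^{>l}$ is $\mu_\delta(T_l)$-admissible, and reading off the elementwise identity from its proof (the relation $\mu_\delta\circ\mu_{\theta_k}\circ\cdots\circ\mu_{\theta_1}=\mu_{\theta_k}\circ\cdots\circ\mu_{\theta_1}\circ\mu_\delta$ valid for sequences untouching $S(\delta)\cup\{\delta\}$) one gets $\mu^{\mu_\delta(T_l)}_{\underline{\theta}^{>l}}(\mu_\delta(\gamma'))=\mu_\delta\bigl(\mu^{T_l}_{\underline{\theta}^{>l}}(\gamma')\bigr)$ for all $\gamma'\in T_l$. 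Since $\delta$ is mutable in $T_l$, the finite sequence $(\theta_1,\dots,\theta_l,\delta)$ is $T$-admissible and carries $T$ to $\mu_\delta(T_l)$; concatenating it with the $\mu_\delta(T_l)$-admissible tail $\underline{\theta}^{>l}$ preserves conditions (1)--(3) of Definition~\ref{D:infinite mutation} (the only work being a re-indexing of the $l_\gamma$'s), so $\underline{\theta}\cup_l(\delta)$ is $T$-admissible. For $\gamma\in T$ and $\gamma_l:=\mu_{\theta_l}\circ\cdots\circ\mu_{\theta_1}(\gamma)$ we then compute
\[
\mu_{\underline{\theta}\cup_l(\delta)}(\gamma)=\mu^{\mu_\delta(T_l)}_{\underline{\theta}^{>l}}\bigl(\mu_\delta(\gamma_l)\bigr)=\mu_\delta\bigl(\mu^{T_l}_{\underline{\theta}^{>l}}(\gamma_l)\bigr)=\mu_\delta\bigl(\mu_{\underline{\theta}}(\gamma)\bigr),
\]
which is the asserted equality, completing the argument.
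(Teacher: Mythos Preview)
Your proof is correct and follows essentially the same approach as the paper's: choose $r$ so that $S(\delta)\cup\{\delta\}$ has stabilized by stage $r$, note that the tail $(\theta_i)_{i>l}$ leaves these arcs untouched, and invoke Lemma~\ref{L:untouched} to commute $\mu_\delta$ past the tail. You are in fact a bit more explicit than the paper in constructing $r$ via the preimage set $F$ and in arguing by contradiction that no $\theta_{l+j}$ lies in $S(\delta)\cup\{\delta\}$, whereas the paper simply asserts the existence of such an $r$ with properties~(\ref{E:stagnant1}) and~(\ref{E:stagnant2}).
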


\begin{proof} 
Set $T'=\mu_{\underline{\theta}}(T)$. 
Since $\delta \in T'$ is mutable, we can consider the quadrilateral $S(\delta)$ in $T'$ with diagonal $\delta$. Furthermore, because $\underline{\theta}$ is $T$-admissible with $\mu_{\underline{\theta}}(T)=T'$, there exists an $r \in I$ such that
\begin{eqnarray} \label{E:stagnant1}
  \{\delta\}\cup S(\delta) \subseteq \mu_{\theta_{r}} \circ \ldots \circ \mu_{\theta_1}(T) = T_r
\end{eqnarray}
and such that 
\begin{eqnarray}\label{E:stagnant2}
  \mu_{\theta_k} \circ \ldots \circ \mu^{T_r}_{\theta_{r+1}}(\gamma) = \gamma
\end{eqnarray}
for all $\gamma \in S(\delta) \cup \{\delta\}$ and $r \leq k \in I$. Fix now an $l \in I$ with $l \geq r$ and set
\[
\underline{\theta} \cup_l (\delta) = (\theta_1, \ldots, \theta_l, \delta, \theta_{l+1}, \theta_{l+2}, \ldots).
\]
This sequence is $T$-admissible: It is clear that $\theta_1$ is mutable in $T$ and that $\theta_i \in \mu_{\theta_{i-1}} \circ \ldots \circ \mu_{\theta_1}(T)$ is mutable for $2 \leq i \leq l$. Furthermore, by (\ref{E:stagnant1}), we have that $\delta$ is mutable in $\mu_{\theta_{l}} \circ \ldots \circ \mu_{\theta_1}(\tilde{T})$. Finally, setting $T_l = \mu_{\theta_l} \circ \ldots \circ \mu_{\theta_1}(T)$, it follows from (\ref{E:stagnant2}) that the $T_l$-admissible sequence 
\[
 \underline{\theta}_{l+1}=(\theta_i)_{i \in I \setminus \{1, \ldots, l\}}
\]
leaves all arcs in $\{\delta\} \cup S(\delta)$ untouched. By Lemma \ref{L:untouched} we obtain that $\underline{\theta}_{l+1}$ is a $\mu_{\delta}(T_l)$-admissible sequence, and therefore the sequence $\underline{\theta} \cup_l (\delta)$ is $T$-admissible. Furthermore, again by Lemma \ref{L:untouched}, for all $\gamma \in T$ we obtain
\begin{eqnarray*}
 \mu_{\underline{\theta}\cup_l (\delta)}(\gamma) 	&=& \mu_{\underline{\theta}_{l+1}} (\mu^{T_l}_{\delta}(\mu_{\theta_l} \circ \ldots \circ \mu_{\theta_1}(\gamma)))\\
							&=& \mu_{\delta}(\mu_{\underline{\theta}_{l+1}}^{T_l} (\mu_{\theta_l} \circ \ldots \circ \mu_{\theta_1}(\gamma)))
							= \mu_{\delta}(\mu_{\underline{\theta}}(\gamma)),
\end{eqnarray*}
which proves the claim.

\end{proof}

\begin{rmq}\label{R:precomposing}
 With the notation as in Lemma \ref{L:precomposing}, assume that $S(\delta)$ is the quadrilateral in $\mu_{\underline{\theta}}(T)$ with diagonal $\delta$. In the proof of Lemma \ref{L:precomposing}, we picked $r \in I$ big enough so that not only the the desired property is satisfied but so that we furthermore have $\{\delta\} \cup S(\delta) \subseteq \mu_{\theta_r} \circ \ldots \circ \mu_{\theta_1}(T)$ and $(\theta_i)_{i >r}$ leaves every arc in $\{\delta\} \cup S(\delta)$ untouched. We will use this aspect of the construction in the proof of Proposition \ref{P:transitivity}.
\end{rmq}

\begin{prop}\label{P:transitivity}
Let $T$, $T'$ and $T''$ be triangulations of $(S,M)$. Assume there exists a $T$-admissible sequence $\underline{\alpha}$ such that $\mu_{\underline{\alpha}}(T) = T'$ and a $T'$-admissible sequence $\underline{\beta}$ such that $\mu_{\underline{\beta}}(T') = T''$. Then there exists a $T$-admissible sequence $\underline{\gamma}$ such that $\mu_{\underline{\gamma}}(T)=T''$.
\end{prop}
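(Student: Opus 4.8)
The plan is to build $\underline{\gamma}$ by interleaving the $T$-admissible sequence $\underline{\alpha}$ with (shifted copies of) the entries of $\underline{\beta}$, inserting each $\beta_j$ sufficiently far along $\underline{\alpha}$ so that the part of $\underline{\alpha}$ still to come does not disturb the quadrilateral needed to perform the mutation at $\beta_j$. Concretely, I would proceed by induction, repeatedly applying Lemma~\ref{L:precomposing} (together with the strengthened statement recorded in Remark~\ref{R:precomposing}). Since $\beta_1$ is mutable in $\mu_{\underline{\alpha}}(T)=T'$, Lemma~\ref{L:precomposing} gives an index $r_1 \in I$ so that for every $l \geq r_1$ the sequence $\underline{\alpha}\cup_l(\beta_1)$ is $T$-admissible and mutates $T$ to $\mu_{\beta_1}(T')$; moreover, by Remark~\ref{R:precomposing}, we may take $r_1$ large enough that the tail of $\underline{\alpha}$ beyond $r_1$ leaves $\{\beta_1\}\cup S(\beta_1)$ untouched. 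Now $\beta_2$ is mutable in $\mu_{\beta_1}(T')$, so we apply Lemma~\ref{L:precomposing} again to the $T$-admissible sequence $\underline{\alpha}\cup_{r_1}(\beta_1)$, obtaining an index $r_2$ past which we may insert $\beta_2$, and so on. Iterating, at stage $j$ we have a $T$-admissible sequence $\underline{\gamma}^{(j)}$ obtained from $\underline{\alpha}$ by inserting $\beta_1,\dots,\beta_j$ at positions $r_1 < r_2 < \cdots < r_j$, with $\mu_{\underline{\gamma}^{(j)}}(T) = \mu_{\beta_j}\circ\cdots\circ\mu_{\beta_1}(T') = \mu_{\underline{\beta}_j}(T')$ where $\underline{\beta}_j = (\beta_1,\dots,\beta_j)$.

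The candidate sequence $\underline{\gamma}$ is then the "limit" of the $\underline{\gamma}^{(j)}$: since the insertion positions $r_j$ are strictly increasing, every finite initial segment of the $\underline{\gamma}^{(j)}$ stabilises, so $\underline{\gamma}$ is a well-defined sequence of arcs, equal to $(\alpha_1,\dots,\alpha_{r_1},\beta_1,\alpha_{r_1+1},\dots,\alpha_{r_2},\beta_2,\alpha_{r_2+1},\dots)$ (and if $\underline{\beta}$ is finite with $n$ terms, simply $\underline{\gamma} = \underline{\gamma}^{(n)}$, which already finishes the argument). I must then check the three conditions of Definition~\ref{D:infinite mutation} for $\underline{\gamma}$ and verify $\mu_{\underline{\gamma}}(T)=T''$. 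Conditions (1) and (2) are local: any finite initial segment of $\underline{\gamma}$ agrees with a finite initial segment of some $\underline{\gamma}^{(j)}$, each of which is $T$-admissible, so each $\gamma_i$ is mutable at the appropriate stage. For condition (3) and the identification of the mutation, fix $\delta \in T$. Since $\underline{\alpha}$ is $T$-admissible, $\mu_{\theta}\circ\cdots(\delta)$ along $\underline{\alpha}$ stabilises to $\mu_{\underline{\alpha}}(\delta) =: \delta' \in T'$; since $\underline{\beta}$ is $T'$-admissible, $\mu$ along $\underline{\beta}$ stabilises $\delta'$ to $\mu_{\underline{\beta}}(\delta') \in T''$. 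I would choose an index of $\underline{\gamma}$ beyond which both of these stabilisations have already occurred (using that, beyond $r_j$, inserting $\beta_j$ acts on the relevant arc exactly as $\mu_{\beta_j}$ does in $T'$, by the untouched-ness built into Remark~\ref{R:precomposing} and the commutation Lemma~\ref{L:commutativity}); beyond that index the value $\mu_{\gamma_k}\circ\cdots\circ\mu_{\gamma_1}(\delta)$ is constant and equals $\mu_{\underline{\beta}}(\mu_{\underline{\alpha}}(\delta))$. Hence $\mu_{\underline{\gamma}}(T) = \{\mu_{\underline{\beta}}(\mu_{\underline{\alpha}}(\delta)) \mid \delta \in T\} = \mu_{\underline{\beta}}(T') = T''$.

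The main obstacle is bookkeeping in the passage to the limit: one must make sure that after inserting $\beta_1,\dots,\beta_j$ the remaining original arcs $\alpha_{r_j+1},\alpha_{r_j+2},\dots$ genuinely still act the way they did in $\underline{\alpha}$, i.e.\ that inserting the $\beta$'s does not retroactively change the effect of the $\alpha$-tail, and conversely that the $\alpha$-tail does not interfere with the newly inserted $\beta_j$ — this is exactly what the "untouched" hypothesis of Lemma~\ref{L:untouched} and the careful choice of $r_j$ in Remark~\ref{R:precomposing} buy us, but it requires threading the estimate through every induction step and then checking it survives the limit. A secondary subtlety is that condition (3) for $\underline{\gamma}$ is not automatic from (3) for $\underline{\alpha}$ and $\underline{\beta}$ separately: an arc $\delta$ might be moved by the $\alpha$-part and then moved back, or moved further, by the $\beta$-part, so one genuinely needs the stabilisation of both component sequences plus the observation that no $\beta_j$ is inserted "early enough" to interact with an $\alpha$-mutation that has not yet stabilised. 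Once the indices $r_j$ are chosen monotonically and large enough at every stage, all of this goes through, and the construction of $\underline{\gamma}$ is complete.
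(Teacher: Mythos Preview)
Your proposal is correct and follows essentially the same approach as the paper: both interleave $\underline{\alpha}$ with the entries of $\underline{\beta}$, using Lemma~\ref{L:precomposing} together with Remark~\ref{R:precomposing} to choose strictly increasing insertion indices, and then verify admissibility of the limiting sequence via Lemma~\ref{L:commutativity} to reorder finite initial segments into the form $(\alpha_1,\ldots,\alpha_m,\beta_1,\ldots,\beta_n)$. The paper's write-up is slightly more explicit in the verification of condition~(3), splitting into the cases where $\delta' = \mu_{\underline{\alpha}}(\delta)$ is mutable in $T'$ or not, but this is bookkeeping rather than a different idea.
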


\begin{proof}
If $\underline{\alpha}$ is a finite sequence, i.e.\ $\underline{\alpha} = (\alpha_1, \ldots, \alpha_n)$ for some $n \in \ZZ_{>0}$ the statement is trivial -- we can just set $\underline{\gamma} = (\alpha_1, \ldots, \alpha_n, \underline{\beta})$. Furthermore, if $\underline{\beta}$ is a finite sequence then the statement follows by iteratively applying Lemma \ref{L:precomposing}. Assume thus that $\underline{\alpha} = (\alpha_i)_{i \in \ZZ_{>0}}$ and $\underline{\beta} = (\beta_i)_{i \in \ZZ_{>0}}$. We build a $T$-admissible sequence $\underline{\gamma}$ with $\mu_{\underline{\gamma}}(T)=T''$ by interlacing the sequences $\underline{\alpha}$ and $\underline{\beta}$ in the following way:

Since $\beta_1 \in T'$ is mutable, by Lemma \ref{L:precomposing} there exists an $l_1 \in I_\alpha$ such that
\[
 \underline{\alpha} \cup_{l_1} (\beta_1) = (\alpha_1, \ldots, \alpha_{l_1}, \beta_1, \alpha_{l_1+1}, \ldots)
\]
is $T$-admissible with $\mu_{\underline{\alpha} \cup_{l_1} (\beta_1)}(\gamma) = \mu_{\beta_1} (\mu_{\underline{\alpha}}(\gamma))$ for all $\gamma \in T$. By Remark \ref{R:precomposing} we may assume that $l_1$ is big enough such that, if $S(\beta_1)$ is the quadrilateral in $\mu_{\underline{\alpha}}(T)$ with diagonal $\beta_1$, we have $\{\beta_1\} \cup S(\beta_1) \subseteq \mu_{\alpha_{l_1}} \circ \ldots \circ \mu_{\alpha_1}(T)$ and the $\mu_{\alpha_{l_1}} \circ \ldots \circ \mu_{\alpha_1}(T)$-admissible sequence $(\alpha_j)_{j > l_1}$ leaves all arcs in $\{\beta_1\} \cup S(\beta_1)$ untouched. By iteratively applying Lemma \ref{L:precomposing} for all $i \geq 2$ we can pick $l_{i} \in \ZZ_{>0}$ with $l_i > l_{i-1}$ and set $l_0 = 0$, such that
\begin{eqnarray*}
 \underline{\alpha} \cup (\beta_1, \ldots, \beta_{i}) &=& \underline{\alpha} \cup (\beta_1, \ldots, \beta_{i-1}) \cup_{l_i} (\beta_{i}) \\
							  &=& (\alpha_1, \ldots, \alpha_{l_1}, \beta_1, \alpha_{l_1+1}, \ldots, \alpha_{l_{i}}, \beta_{i}, \alpha_{l_{i}+1}, \ldots)\\
							  &=& ((\alpha_{l_{k-1}+1}, \ldots, \alpha_{l_k}, \beta_{k})_{1 \leq k \leq i}, (\alpha_j)_{j \geq l_i + 1})
\end{eqnarray*}
is $T$-admissible with 
\[
 \mu_{\underline{\alpha} \cup (\beta_1, \ldots, \beta_i)}(\gamma) = \mu_{\beta_i}\circ \ldots \circ \mu_{\beta_1}(\mu_{\underline{\alpha}}(\gamma)).
\]
For $i \geq 2$ assume that $S(\beta_i)$ is the quadrilateral in  $\mu_{\underline{\alpha} \cup (\beta_1, \ldots, \beta_{i-1})}(T)$ with diagonal $\beta_i$. By Remark \ref{R:precomposing} we can assume without loss of generality that for each $i \in \ZZ_{>0}$ we picked $l_i \in \ZZ_{>0}$ big enough such that $(\alpha_j)_{j > l_i}$ leaves all arcs in $S(\beta_i) \cup \{\beta_i\}$ untouched. Set
\[
 \underline{\gamma} = (\gamma_i)_{i \in \ZZ_{>0}} = ((\alpha_{l_{i-1}+1}, \ldots, \alpha_{l_i}))_{i \in I_\beta}.
\]
In the following we prove that this is the desired $T$-admissible sequence with $\mu_{\underline{\gamma}}(T) = T''$.

Notice that if we consider finite length sequences of the form $(\gamma_1, \ldots, \gamma_k)$ for $k \geq 1$ then as sets we have $\{\gamma_1, \ldots, \gamma_k\} = \{\alpha_1, \ldots, \alpha_m, \beta_1, \ldots, \beta_n\}$ for some $m,n \in \ZZ_{>0}$. Iteratively applying Lemma \ref{L:commutativity}, and using the fact that $(\alpha_j)_{j > l_i}$ leaves all arcs in $\{\beta_i\} \cup S(\beta_i)$ untouched, we can push the $\beta_i$ towards the end of the sequence and obtain a $T$-admissible sequence
\[
 (\alpha_1, \ldots, \alpha_m, \beta_1, \ldots, \beta_n)
\]
and for all $\delta \in T$ we have
\[
 \mu_{\beta_n} \circ \ldots \mu_{\beta_1} \circ \mu_{\alpha_m} \circ \ldots \circ \mu_{\alpha_1}(\delta) = \mu_{\gamma_k} \circ \ldots \circ \mu_{\gamma_1}(\delta).
\]

We now show that $\underline{\gamma}$ is a $T$-admissible sequence. Clearly $\gamma_1 = \alpha_1$ is mutable in $T$. For $i \geq 2$, there exists a $j \in \ZZ_{>0}$ such that the the first $i$ entries of the sequence $\underline{\alpha} \cup \{\beta_1, \ldots, \beta_j\}$ coincide with the sequence $(\gamma_1, \ldots, \gamma_i)$. Since $\underline{\alpha} \cup \{\beta_1, \ldots, \beta_j\}$ is $T$-admissible, it follows that $\gamma_i \in \mu_{\gamma_{i-1}} \circ \ldots \circ \mu_{\gamma_1}(T)$ is mutable. To show that the sequence is $T$-admissible, it thus remains to show that for each $\delta \in T$ there exists an $l >0$ such that for all $k \geq l$ we have
\[
 \mu_{\gamma_k} \circ \ldots \circ \mu_{\gamma_1}(\delta) = \mu_{\gamma_l} \circ \ldots \circ \mu_{\gamma_1}(\delta).
\]
Let $\mu_{\underline{\alpha}}(\delta) = \delta' \in T'$. Assume first that $\delta'$ is not mutable in $T'$. Then it is not mutable in $\mu_{\beta_l} \circ \ldots \circ \mu_{\beta_1}(T')$ for any $l \geq 1$ by Lemma \ref{L:mutable1}. It follows that $\beta_i \neq \delta'$ for all $i \geq 1$.
Since $\underline{\alpha}$ is $T$-admissible there exists an $l \in \ZZ_{>0}$ such that for all $k \geq l$ we have
\[
\mu_{\alpha_k} \circ \ldots \circ \mu_{\alpha_1}(\delta) = \mu_{\underline{\alpha}}(\delta) = \delta'.
\]
Pick $m \in \ZZ_{>0}$ such that $\{\gamma_1, \ldots , \gamma_m\} = \{\alpha_1, \ldots, \alpha_l, \beta_1, \ldots, \beta_p\}$ for some $p \in \ZZ_{>0}$. For all $k \geq m$ we have $\{\gamma_1, \ldots, \gamma_k\} =  \{\alpha_1, \ldots, \alpha_s, \beta_1, \ldots, \beta_t\}$ for some $s \geq l$ and $t \geq p$ and we obtain
\begin{align}
 \mu_{\gamma_k}\circ \ldots \circ \mu_{\gamma_1}(\delta) &= \mu_{\beta_t} \circ \ldots \circ \mu_{\beta_1} \circ \mu_{\alpha_s} \circ \ldots \circ \mu_{\alpha_1}(\delta) \label{E:correct1}\\
 											&= \mu_{\beta_t} \circ \ldots \circ \mu_{\beta_1} (\delta')
											= \delta', \nonumber
\end{align}
where the last equality holds since $\delta' \notin \{\beta_1, \ldots, \beta_t\}$.
This proves the claim in this case.

On the other hand, if $\delta'$ is mutable in $T'$ then we can consider the quadrilateral $S(\delta')$ in $T'$ with diagonal $\delta'$. There exists an $l \in \ZZ_{>0}$ such that 
\[
 \{\delta'\} \cup S(\delta') \subseteq \mu_{\alpha_l} \circ \ldots \circ \mu_{\alpha_1}(T) = T_l
\]
and for all $k \geq l$
\[
 \mu_{\alpha_k} \circ \ldots \circ \mu^{T_l}_{\alpha_{l+1}}(x) = x
\]
for all $x \in \{\delta'\} \cup S(\delta')$.  If $\delta'' \neq \delta'$ is the other diagonal of $S(\delta')$ then it follows from the definition of mutation that $\mu_{\delta'}^{\mu_{\alpha_k} \circ \ldots \circ \mu_{\alpha_1}(T)}(\delta') = \delta''$ for all $k \geq l$. Since $\underline{\beta}$ is $T'$-admissible there exists a $r \in \ZZ_{>0}$ such that $\mu_{\beta_k} \circ \ldots \circ \mu_{\beta_1}(\delta') = \delta''$ for all $k \geq r$. Pick $m \in \ZZ_{>0}$ such that $\{\gamma_1, \ldots , \gamma_m\} = \{\alpha_1, \ldots, \alpha_q, \beta_1, \ldots, \beta_p\}$ for some $p \geq r$ and $q \geq l$. Let $k \geq m$ with $\{\gamma_1, \ldots, \gamma_k\} =  \{\alpha_1, \ldots, \alpha_s, \beta_1, \ldots, \beta_t\}$ for some $s \geq l$ and $t \geq p$. We obtain
\begin{align}
 \mu_{\gamma_k}\circ \ldots \circ \mu_{\gamma_1}(\delta) &= \mu_{\beta_t} \circ \ldots \circ \mu_{\beta_1} \circ \mu_{\alpha_s} \circ \ldots \circ \mu_{\alpha_1}(\delta) \label{E:correct2}\\
 											&= \mu_{\beta_t} \circ \ldots \circ \mu^{\mu_{\alpha_s} \circ \ldots \circ \mu_{\alpha_1}(\delta)}_{\beta_1} (\delta')
											= \delta''. \nonumber
\end{align}
This proves that the sequence $\underline{\gamma}$ is $T$-admissible. Furthermore, (\ref{E:correct1}) and (\ref{E:correct2}) ensure that 
\[
\mu_{\underline{\gamma}}(\delta) = \mu_{\underline{\beta}} (\mu_{\underline{\alpha}}(\delta)) \; \; \text{for all $\delta \in T$}.
\]

\end{proof}

\begin{theorem}\label{T:partial order}
 The relation $\leq_{s}$ defines a preorder on the set of triangulations of $(S,M)$.
\end{theorem}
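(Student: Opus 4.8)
The plan is to show that $\leq_s$ is reflexive and transitive, since these are precisely the two axioms of a preorder.

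Reflexivity is immediate: for any triangulation $T$ of $(S,M)$, the empty sequence is trivially $T$-admissible, and mutating $T$ along it yields $T$ itself; hence $T \leq_s T$. (Alternatively, if one prefers nonempty sequences, one can take any mutable arc $\theta$ and use the finite sequence $(\theta, \theta)$, which is $T$-admissible by Lemma~\ref{L:mutable1} and returns $T$ since single mutations are involutive.)

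Transitivity is exactly the content of Proposition~\ref{P:transitivity}: if $T \leq_s T'$ and $T' \leq_s T''$, then there is a $T$-admissible sequence $\underline{\alpha}$ with $\mu_{\underline{\alpha}}(T) = T'$ and a $T'$-admissible sequence $\underline{\beta}$ with $\mu_{\underline{\beta}}(T') = T''$, and Proposition~\ref{P:transitivity} produces a $T$-admissible sequence $\underline{\gamma}$ with $\mu_{\underline{\gamma}}(T) = T''$, so $T \leq_s T''$. I would simply invoke Proposition~\ref{P:transitivity} directly.

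There is essentially no obstacle here: all the real work has already been carried out in the preceding lemmas, with the genuine difficulty concentrated in the interlacing construction in the proof of Proposition~\ref{P:transitivity} (building $\underline{\gamma}$ by weaving $\underline{\alpha}$ and $\underline{\beta}$ together while using Lemma~\ref{L:precomposing} and Remark~\ref{R:precomposing} to keep everything admissible and untouched). Thus the proof of Theorem~\ref{T:partial order} is just a two-line bookkeeping argument: note reflexivity from the empty (or doubled) admissible sequence, and cite Proposition~\ref{P:transitivity} for transitivity.
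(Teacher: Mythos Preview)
Your proposal is correct and matches the paper's own proof essentially verbatim: the paper simply states that reflexivity is clear and that transitivity follows from Proposition~\ref{P:transitivity}. Your additional remark about using the doubled sequence $(\theta,\theta)$ in case one is uneasy about the empty sequence is a harmless elaboration, not a departure.
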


\begin{proof}
 Reflexivity is clear and transitivity follows from Proposition \ref{P:transitivity}.
\end{proof}
	
\section{Strong mutation equivalence}

The preorder $\leq_s$ induces an equivalence relation on the set of triangulations of a fixed infinitely marked surface.

\begin{defi}
Let $T$ and $T'$ be triangulations of $(S,M)$. We say that $T$ and $T'$ are {\em strongly mutation equivalent} if $T \leq_s T'$ and $T' \leq_s T$.
\end{defi}

This section is dedicated to understanding when two triangulations of $\IG$, respectively of $\IGC$, are strongly mutation equivalent. Before we provide a complete classification of strong mutation equivalence classes in both cases, we introduce useful notation and make some observations.

\begin{defi}
	Let $T$ be a triangulation of $\IG$ (respectively of $\IGC$) and set $\E = \E(\IG)$ (respectively $\E = \E(\IGC)$). A {\em finite subpolygon of $T$} is a finite set of vertices $P = \{x_1, \ldots, x_k\} \subseteq \mathbb{Z} \cup \{\pm \infty\}$ with $k \geq 3$ that can be ordered such that $x_1 < x_2 < \ldots < x_k$ and with $(x_1,x_k) \in T \cup \E$ and for all $1 \leq i < k$ we have $(x_i,x_{i+1}) \in T \cup \E$.
	
	If $P$ is a finite subpolygon of $T$ as above, we denote by $\s(P)$ the set $\s(P) = \{(x_i,x_{j}) \mid 1 \leq i < j \leq k\}$. We call 
	\[ 
	\E(P) =\{(x_i,x_{i+1}) \mid 1 \leq i < k-1\} \cup \{(x_1,x_k)\} 
	\]
        the {\em edges of $P$} and 
        \[
         \A(P) = \s(P) \setminus \E(P)
        \]
       the {\em arcs of $P$}. 
\end{defi}

\begin{notation}
Let $T$ be a triangulation of $\IG$ (respectively of $\IGC$) and let $P \subseteq \mathbb{Z} \cup \{\pm \infty\}$ be a set of vertices. Then we denote by $T \mid_P$ the set of arcs
\[
  T \mid_P = \{(a,b) \in T \mid a,b \in P\}.
\]
\end{notation}

\begin{rmq}\label{R:subtriangulation}
Locally, triangulations of $\IG$ and $\IGC$ behave like triangulations of finite polygons: If $P$ is a finite subpolygon of $T$, then  $T \mid_P$ is a triangulation of the polygon with vertices $P$,
i.e.\ a maximal set of non intersecting arcs with endpoints in $P$, and we call it a {\em finite subtriangulation of $T$}.
\end{rmq}

\begin{rmq}\label{R:fin. subtr.}
Assume $T$ and $T'$ are both triangulations of $\IG$, respectively of $\IGC$, with finite subtriangulation $T \mid_P$ and $T' \mid_{P'}$ 
for some finite subpolygons $P$ of $T$ and $P'$ of $T'$ such that $P' \subseteq P$. Then -- via mutations in the finite subpolygon with vertices $P$ -- there exists a finite $T$-admissible sequence $\underline{\theta}$ such that $\mu_{\underline{\theta}}(T)\mid_{P'} = T'\mid_{P'}$ and $\underline{\theta}$ leaves all arcs in $T \setminus \A(P)$ untouched.
\end{rmq} 
	
The following results will be useful to describe strong mutation equivalence classes.

\begin{lem}\label{L:union of subpolygons}
 Let $T$ be a triangulation of $\IG$, respectively of $\IGC$ with finite subpolygons $P_i$ for $i \in \ZZ_{>0}$ such that $\A(P_i) \cap \s(P_j) = \varnothing$ for $i \neq j$. Let $T'$ be a triangulation of $\IG$, respectively of $\IGC$, with finite subpolygons $P'_i$ for $i \in \ZZ_{>0}$ such that $P'_i \subseteq P_i$ for all $i \in \ZZ_{>0}$.
 Then there exists a $T$-admissible sequence $\underline{\theta}$ such that
 \[
  \mu_{\underline{\theta}}(T) \mid_{\bigcup_{i \in \ZZ_{>0}}P'_i} = T' \mid_{\bigcup_{i \in \ZZ_{>0}}P'_i}
 \]
 and such that $\underline{\theta}$ leaves all arcs in $T \setminus \bigcup_{i \in \ZZ_{>0}}\A(P_i)$ untouched.

\end{lem}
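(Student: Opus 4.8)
The plan is to combine Remark~\ref{R:fin. subtr.} with the interlacing/transitivity technology built up in Section~3, applied countably many times in a way that controls which arcs get touched. Roughly: for each $i \in \ZZ_{>0}$, Remark~\ref{R:fin. subtr.} (applied to the finite subpolygon $P_i$ of $T$ and the subpolygon $P_i' \subseteq P_i$ of $T'$) produces a \emph{finite} $T$-admissible sequence $\underline{\theta}^{(i)}$ which achieves $\mu_{\underline{\theta}^{(i)}}(T)\mid_{P_i'} = T'\mid_{P_i'}$ while leaving every arc in $T \setminus \A(P_i)$ untouched. The disjointness hypothesis $\A(P_i) \cap \s(P_j) = \varnothing$ for $i \neq j$ is exactly what ensures these finite sequences do not interfere with one another: each $\underline{\theta}^{(i)}$ only ever mutates arcs in $\A(P_i)$ and only introduces arcs in $\A(P_i) \subseteq \s(P_i)$, so in particular $\underline{\theta}^{(i)}$ leaves all arcs in $\A(P_j)$ untouched for every $j \neq i$, and dually the arcs it produces lie outside $\s(P_j)$.

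The main step is then to concatenate the $\underline{\theta}^{(i)}$ into a single sequence $\underline{\theta}$. First I would observe that because the $\underline{\theta}^{(i)}$ act on pairwise "independent" regions of $T$ (they touch disjoint sets of arcs, and the quadrilaterals $S(\gamma)$ involved for $\gamma$ mutated in step $i$ are contained in $\s(P_i)$, hence disjoint from what step $j$ touches), an inductive application of Lemma~\ref{L:commutativity} shows that for any finite $n$ the concatenation $\underline{\theta}^{(1)} \cup \underline{\theta}^{(2)} \cup \cdots \cup \underline{\theta}^{(n)}$ is $T$-admissible and its effect on any $\gamma \in T$ is: apply $\underline{\theta}^{(i)}$ if $\gamma \in \A(P_i)$, and leave $\gamma$ fixed otherwise. (This is really the finite-stage content of Lemma~\ref{L:untouched} / the argument in Proposition~\ref{P:transitivity}, specialized to the situation where the "later" part of the sequence leaves untouched everything the current finite block disturbs.) Now define $\underline{\theta}$ to be the infinite concatenation $\underline{\theta} = (\underline{\theta}^{(1)}, \underline{\theta}^{(2)}, \underline{\theta}^{(3)}, \ldots)$, which makes sense as a sequence indexed by $\ZZ_{>0}$ since each block is finite.

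It remains to check that $\underline{\theta}$ is $T$-admissible and has the claimed effect. Conditions (1) and (2) of Definition~\ref{D:infinite mutation} follow from the finite-stage statement above, applied to the finite truncations of $\underline{\theta}$. For condition (3): given $\gamma \in T$, either $\gamma \in \A(P_i)$ for some (necessarily unique, by disjointness) $i$, in which case $\mu_{\theta_k}\circ\cdots\circ\mu_{\theta_1}(\gamma)$ stabilizes once $k$ exceeds the length of the block $\underline{\theta}^{(i)}$ — because all later blocks $\underline{\theta}^{(j)}$, $j \neq i$, leave the image arc (which lies in $\A(P_i)$, disjoint from $\s(P_j)$) untouched; or $\gamma \notin \bigcup_i \A(P_i)$, in which case $\gamma$ is left untouched by every block, so $\mu_{\theta_k}\circ\cdots\circ\mu_{\theta_1}(\gamma) = \gamma$ for all $k$. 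This simultaneously proves the last clause of the lemma ($\underline{\theta}$ leaves every arc in $T \setminus \bigcup_i \A(P_i)$ untouched) and, combined with $\mu_{\underline{\theta}^{(i)}}(T)\mid_{P_i'} = T'\mid_{P_i'}$, gives $\mu_{\underline{\theta}}(T)\mid_{\bigcup_i P_i'} = \bigcup_i \big(T'\mid_{P_i'}\big) = T'\mid_{\bigcup_i P_i'}$, where the last equality uses that an arc of $T'$ with both endpoints in some $P_j'$ automatically has both endpoints in $\bigcup_i P_i'$, and conversely an arc with endpoints in $\bigcup_i P_i'$ lies in a single $P_j'$ because the $P_i$ are "separated" (two endpoints in different $P_i$'s would, together with the edges of the subpolygons, force an intersection — or more simply, one should record in the hypotheses/setup that the subpolygons are nested-or-disjoint enough that no arc straddles two of them; this is implicit in $\A(P_i)\cap\s(P_j)=\varnothing$ and will need to be spelled out).

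The step I expect to be the genuine obstacle is the bookkeeping in the last paragraph: making rigorous the claim that the infinite concatenation "does what each finite block does, independently," i.e.\ verifying condition~(3) and the untouched-claim uniformly. The cleanest route is probably not to re-prove commutativity from scratch but to package each finite block via Lemma~\ref{L:precomposing} / the interlacing argument of Proposition~\ref{P:transitivity}: inductively assume $\underline{\theta}^{(1)} \cup \cdots \cup \underline{\theta}^{(i-1)}$ is already built and $T$-admissible with the right effect and leaving $\A(P_i)$ untouched, then splice $\underline{\theta}^{(i)}$ in — since $\underline{\theta}^{(i)}$ is finite this is just finitely many applications of Lemma~\ref{L:precomposing}, and disjointness guarantees the splicing does not disturb the earlier blocks. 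The only subtlety is confirming that each successive block can indeed be appended at the \emph{end} (rather than interlaced), which again is where $\A(P_i)\cap\s(P_j)=\varnothing$ does the work: the finite sequence $\underline{\theta}^{(i)}$, viewed inside $\mu_{\underline{\theta}^{(1)}\cup\cdots\cup\underline{\theta}^{(i-1)}}(T)$, is still admissible and has the same effect, because that triangulation agrees with $T$ on all of $\s(P_i)$.
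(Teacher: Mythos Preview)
Your proposal is correct and is essentially the paper's own argument: obtain finite sequences $\underline{\theta}^{(i)}$ from Remark~\ref{R:fin. subtr.}, concatenate them, and use the hypothesis $\A(P_i)\cap\s(P_j)=\varnothing$ to show the blocks do not interfere, so that each $\gamma\in T$ stabilises after its own block. The only cosmetic difference is packaging: the paper carries out a direct induction on the length $m$ of the truncated sequence (tracking explicitly, via a case split ``within a block'' versus ``start of a new block'', that $\mu_{\theta_m}\circ\cdots\circ\mu_{\theta_1}(\gamma)$ equals $\mu_{\underline{\theta}^{(i)}}(\gamma)$ or $\gamma$ according to whether $\gamma\in\A(P_i)$), whereas you gesture at the same verification more informally and float Lemma~\ref{L:precomposing}/Proposition~\ref{P:transitivity} as an alternative route; the direct induction is cleaner here and is what the paper does.
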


\begin{proof}
 By Remark \ref{R:fin. subtr.} for all $i \in \ZZ_{>0}$ there exists a finite length $T$-admissible sequence $\underline{\theta}^i$ with $\mu_{\underline{\theta^i}}(T) \mid_{P'_i} = T' \mid_{P'_i}$ and such that $\underline{\theta}^i$ leaves all arcs in $T \setminus \A(P_i)$ untouched. We label the arcs in the sequence $\underline{\theta}^i$ by $\underline{\theta}^i = (\theta_j)_{l_{i-1} < j \leq l_i}$ where we set $l_0 = 0$ and for $i \geq 1$ we pick $l_i \in \ZZ_{>0}$ such that $(l_i-l_{i-1})$ is the length of the admissible sequence $\underline{\theta}^i$.
 We make the following observation
 \begin{itemize}
  \item[(i)] Let $\gamma \in T$. Then we have $\gamma \in \A(P_i)$ if and only if $\mu_{\theta_k} \circ \ldots \circ \mu_{\theta_{l_{i-1}+1}}(\gamma) \in \A(P_i)$ for all $l_{i-1}< k \leq l_i$. 
 \end{itemize}
 It is clear that if $\gamma \notin \A(P_i)$, then, since $\underline{\theta}^i$ leaves $\gamma$ untouched, we also have $\mu_{\theta_k} \circ \ldots \circ \mu_{\theta_{l_{i-1}+1}}(\gamma) = \gamma \notin \A(P_i)$ for all $l_{i-1}< k \leq l_i$. On the other hand, assume that $\gamma \in T \cap \A(P_i)$. Set $T_0=T$ and for $1 \leq m \leq k$ set $T_k = \mu_{\theta_k} \circ \ldots \circ \mu_{\theta_1}(T)$. We show the claim by induction. For $i \geq 0$ assume that $\gamma \in T_i \cap \A(P_i)$. Since $\underline{\theta}^i$ leaves all arcs in $\E(P_i)$ untouched, we obtain that $P_i$ is a finite subpolygon of $T_i$. Consider the quadrilateral $S(\gamma)$ in $T_i$ with diagonal $\gamma$. We have $S(\gamma) \subseteq \s(P_i)$ and therefore the other diagonal $\gamma' \neq \gamma$ in $S(\gamma)$ also lies in $\A(P_i)$. It follows that $\mu_{\theta_{m+1}}(\gamma) \in \{\gamma, \gamma'\}$ lies in $\A(P_i)$.
 
 Set $\underline{\theta} = (\theta_i)_{i \geq 1}$. Clearly the sequence $(\theta_1)$ of length one is $T$-admissible and for all $\gamma \in T$ we have
 \[
  \mu_{\theta_1}(\gamma) = \begin{cases} \mu_{\theta_1}(\gamma) \; \text{if $\gamma \in \A(P_1)$}\\
                                                              \gamma \; \text{otherwise}.
                                                             \end{cases}
 \]
 
 We show that for all $m \geq 1$ the sequence $(\theta_i)_{1 \leq i \leq m}$ is $T$-admissible and, setting $j \geq 1$ such that $l_{j-1} < m \leq l_j$, for all $\gamma \in T$ we have
 \begin{eqnarray}\label{E:condition}
  \mu_{\theta_m} \circ \ldots \circ \mu_{\theta_1}(\gamma) = \begin{cases}
                                                              \mu_{\theta_m} \circ \ldots \circ \mu_{\theta_{l_{j-1}+1}}(\gamma) \; \text{if $\gamma \in \A(P_j)$}\\
                                                              \mu_{\theta_{l_i}} \circ \ldots \circ \mu_{\theta_{l_{i-1}+1}}(\gamma) \; \text{if $\gamma \in \A(P_i)$ for $1 \leq i < j$}\\
                                                              \gamma \; \text{otherwise}.
                                                             \end{cases}
 \end{eqnarray}

 Assume this condition holds for $m \geq 1$, and let $j \geq 1$ be such that $l_{j-1} < m \leq l_j$. We show that it also holds for $m+1$. Consider thus the sequence $(\theta_i)_{1 \leq i \leq m+1}$. We distinguish two cases.
 
 Case 1: Assume that $l_{j-1} < m < m+1 \leq l_j$. Then $\theta_{m+1}$ is mutable in $\mu_{\theta_m} \circ \ldots \circ \mu_{\theta_{l_{j-1}+1}}(T)$. Consider the quadrilateral $S(\theta_{m+1})$ in $\mu_{\theta_m} \circ \ldots \circ \mu_{\theta_{l_{j-1}+1}}(T)$ with diagonal $\theta_{m+1}$. We show that in fact we have $\{\theta_{m+1}\} \cup S(\theta_{m+1}) \subseteq \mu_{\theta_m} \circ \ldots \circ \mu_{\theta_1}(T)$. 
 
 Assume thus that $\alpha \in \{\theta_{m+1}\} \cup S(\theta_{m+1})$. There exists a $\beta \in T$ with $\alpha = \mu_{\theta_m} \circ \ldots \circ \mu_{\theta_{l_{j-1}+1}}(\beta)$. If $\alpha \in \A(P_i)$ then by (i) we have $\beta \in \A(P_i)$ and therefore 
 \[
  \alpha = \mu_{\theta_m} \circ \ldots \circ \mu_{\theta_{l_{j-1}+1}}(\beta) = \mu_{\theta_m} \circ \ldots \circ \mu_{\theta_1}(\beta) \in \mu_{\theta_m} \circ \ldots \circ \mu_{\theta_1}(T).
 \]
 Assume on the other hand that $\alpha \in \E(P_i)$. Then, since $P_i$ is a subpolygon of $T$, we have $\alpha \in T$. Furthermore, we have $\E(P_i) \cap \A(P_j) = \varnothing$ for all $j \geq 1$: this is clear for $j = i$ and follows from the assumption $\s(P_i) \cap \A(P_j) = \varnothing$ for $i \neq j$. It follows that $\alpha \notin \{\theta_i \mid i \geq 1\}$ and therefore the sequence $(\theta_i)_{1 \leq i \leq m}$ leaves $\alpha$ untouched and we have $\alpha \in \mu_{\theta_m} \circ \ldots \circ \mu_{\theta_1}(T)$. It follows that $\{\theta_{m+1}\} \cup S(\theta_{m+1}) \subseteq \mu_{\theta_m} \circ \ldots \circ \mu_{\theta_1}(T)$ and therefore the sequence $(\theta_i)_{1 \leq i \leq m+1}$ is $T$-admissible. Furthermore, since $\theta_{m+1} \in \A(P_j)$ it leaves all arcs that are not in $\A(P_j)$ untouched. By (i) and since $\A(P_i) \cap \A(P_j) = \varnothing$ for $i \neq j$, we have $\mu_{\theta_{l_i}} \circ \ldots \circ \mu_{\theta_{l_{i-1}+1}}(\gamma) \notin \A(P_j)$ if $\gamma \in \A(P_i)$ with $i \neq j$. It follows that
 \[
  \mu_{\theta_{m+1}} \circ \ldots \circ \mu_{\theta_1}(\gamma) = \begin{cases}
                                                              \mu_{\theta_{m+1}} \circ \ldots \circ \mu_{\theta_{l_{j-1}+1}}(\gamma) \; \text{if $\gamma \in \A(P_j)$}\\
                                                              \mu_{\theta_{l_i}} \circ \ldots \circ \mu_{\theta_{l_{i-1}+1}}(\gamma) \; \text{if $\gamma \in \A(P_i)$ for $1 \leq i < j$}\\
                                                              \gamma \; \text{otherwise}.
                                                             \end{cases}
 \]
 
 Case 2: Assume that $m +1 = l_j +1$. Then $\theta_{m+1} \in \A(P_{j+1})$ is mutable in $T$. Consider the quadrilateral $S(\theta_{m+1})$ in $T$ with diagonal $\theta_{m+1}$. We have $S(\theta_{m+1}) \subseteq \s(P_{j+1})$. Since $\s(P_{j+1}) \cap \A(P_i) = \varnothing$ for all $1 \leq i \leq j$, the sequence $(\theta_i)_{1 \leq i \leq m}$ leaves $\theta_{m+1} \in T$ untouched. By iteratively applying Lemma \ref{L:commutativity} we obtain that $(\theta_i)_{1 \leq i \leq m+1}$ is $T$-admissible with
 \[
  \mu_{\theta_{m+1}} \circ \ldots \circ \mu_{\theta_1}(\gamma) = \begin{cases}
                                                              \mu_{\theta_{m+1}}(\gamma) \; \text{if $\gamma \in \A(P_{j+1})$}\\
                                                              \mu_{\theta_{l_i}} \circ \ldots \circ \mu_{\theta_{l_{i-1}+1}}(\gamma) \; \text{if $\gamma \in \A(P_i)$ for $1 \leq i \leq j$}\\
                                                              \gamma \; \text{otherwise}.
                                                             \end{cases}
 \]
 
 Therefore, for every $m \in \ZZ_{>0}$ the sequence $(\theta_i)_{1 \leq i \leq m}$ is $T$-admissible and satisfies condition (\ref{E:condition}). Consider now the sequence $\underline{\theta}=(\theta_i)_{i \in \ZZ_{>0}}$. Pick $\gamma \in T$. Then, if $\gamma \in \A(P_i)$ for some $i \in \ZZ_{>0}$ for all $k \geq l_i$ we have
 \[
  \mu_{\theta_k} \circ \ldots \circ \mu_{\theta_1}(\gamma) = \mu_{\theta_{l_i}} \circ \ldots \circ \mu_{\theta_1}(\gamma) = \mu_{\theta_{l_i}} \circ \ldots \circ \mu_{\theta_{l_{i-1}+1}}(\gamma)
 \]
 and if $\gamma \notin \bigcup_{i \in \ZZ_{>)}}\A(P_i)$, for all $k \geq 1$ we have $\mu_{\theta_k} \circ \ldots \circ \mu_{\theta_1}(\gamma) = \gamma$. It follows that $\underline{\theta}$ is $T$-admissible with
 \[
  \mu_{\underline{\theta}}(T) \mid_{\bigcup_{i \in \ZZ_{>0}}P'_i} = T' \mid_{\bigcup_{i \in \ZZ_{>0}}P'_i}
 \]
 and that $\underline{\theta}$ leaves all arcs in $T \setminus \bigcup_{i \in \ZZ_{>0}}\A(P_i)$ untouched.

\end{proof}

	\begin{lem} \label{L:mutation in fountain sections}
	Let $T$ and $T'$ be triangulations of $\IG$, respectively of $\IGC$.
	\begin{enumerate}
	
	  \item If $T$ and $T'$ both have a right fountain at $b \in \ZZ$, then there exists a $T$-admissible sequence $\underline{\theta}$ with 
	  \[\mu_{\underline{\theta}}(T) \mid_{[b,\infty)} = T' \mid_{[b,\infty)}\] and such that $\underline{\theta}$ leaves all arcs in $T \setminus T\mid_{[b,\infty)}$ untouched.
	  
	  \item If $T$ and $T'$ both have a left fountain at $a \in \ZZ$, then there exists a $T$-admissible sequence $\underline{\theta}$ with \[\mu_{\underline{\theta}}(T) \mid_{(-\infty,a]} = T' \mid_{(-\infty,a]}\] and 
	  such that $\underline{\theta}$ leaves all arcs in $T \setminus T\mid_{(-\infty,a]}$ untouched.
	  
	\item If $T$ and $T'$ are locally finite, then there exists a $T$-admissible sequence $\underline{\theta}$ with \[\mu_{\underline{\theta}}(T) = T'.\]
	  
	  \item If $T$ and $T'$ are triangulations of $\IGC$ and both have a right fountain at $b = \infty$ (respectively at $b = -\infty$), then there exists a $T$-admissible sequence $\underline{\theta}$ and a $k \in \ZZ$ such that $\pi_k \in T'$ (respectively $\alpha_k \in T'$) with 
	  \[\mu_{\underline{\theta}}(T) \mid_{\{b\} \cup [k,\infty)} = T' \mid_{\{b\} \cup [k,\infty)}\] and such that $\underline{\theta}$ leaves all arcs in $T \setminus T\mid_{\{b\} \cup [k,\infty)}$ untouched.
	  
	  \item If $T$ and $T'$ are triangulations of $\IGC$ and both have a left fountain at $a = \infty$ (respectively at $a = -\infty$), then there exists a $T$-admissible sequence $\underline{\theta}$ and a $k \in \ZZ$ such that $\pi_k \in T'$ (respectively $\alpha_k \in T'$) with  
	  \[\mu_{\underline{\theta}}(T) \mid_{\{a\} \cup (-\infty, k]} = T' \mid_{\{a\} \cup (-\infty, k]}\] and such that $\underline{\theta}$ leaves all arcs in $T \setminus T\mid_{\{a\} \cup (-\infty, k]}$ untouched.

	\end{enumerate}

	\end{lem}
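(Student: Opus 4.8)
\emph{The plan.} All five statements produce a $T$-admissible sequence that does a prescribed job on part of $T$ while leaving the rest alone, and in each case I would assemble it from Lemma~\ref{L:union of subpolygons}, after first passing — when necessary — through a well-chosen ``standard'' triangulation and composing via transitivity (Proposition~\ref{P:transitivity}). The recurring issue is that the finite subpolygons cutting $T$ into pieces and those cutting $T'$ into pieces are a priori misaligned; the device is to choose an intermediate triangulation whose sector decomposition refines both.

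\emph{The fountain cases (1), (2), (4), (5).} I treat (1); the others are symmetric. Let $\tilde T$ be the triangulation that agrees with $T$ on every arc with an endpoint $<b$ and whose restriction to $[b,\infty)$ is the \emph{full fan at $b$}, namely $\{(b,p)\mid p\geq b+2\}$ (still a triangulation, with a right fountain at $b$). First I would show $T\leq_s\tilde T$: writing $\{b+1\}\cup\{p\mid(b,p)\in T\}$ increasingly as $b+1=t_1<t_2<\cdots$, the subpolygons $P_i=\{b\}\cup[t_i,t_{i+1}]$ of $T$ satisfy $\A(P_i)\cap\s(P_j)=\varnothing$ for $i\neq j$ (consecutive ones meet only in the common edge $(b,t_{i+1})$), cover $[b,\infty)\cap\ZZ$, and are also subpolygons of $\tilde T$; Lemma~\ref{L:union of subpolygons} with $P'_i=P_i$ then yields a $T$-admissible $\underline\theta_1$ with $\mu_{\underline\theta_1}(T)=\tilde T$ that leaves every arc with an endpoint $<b$ untouched (each $\A(P_i)$ consists of arcs with both endpoints $\geq b$). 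Running the same argument with $T'$ in place of $T$ — here one uses that the full fan contains \emph{every} $(b,v)$, so the sector decomposition $\{b\}\cup[s_i,s_{i+1}]$ coming from $T'$'s right fountain also consists of subpolygons of $\tilde T$ — shows $\tilde T\leq_s T''$, where $T''$ agrees with $T$ off $[b,\infty)$ and with $T'$ on $[b,\infty)$, again leaving arcs with an endpoint $<b$ untouched. Transitivity produces a single $T$-admissible $\underline\theta$ with $\mu_{\underline\theta}(T)=T''$; as every arc mutated along either intermediate sequence has both endpoints $\geq b$, so does every entry of $\underline\theta$, whence $\underline\theta$ leaves $T\setminus T\mid_{[b,\infty)}$ untouched (Lemma~\ref{L:untouched equivalent}). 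This is precisely (1). For (2) one uses the full fan at $a$ on $(-\infty,a]$; for (4) and (5) one first applies Lemma~\ref{lem:fountaincurve}/Lemma~\ref{lem:pushitfurther} to locate an integer $k$ with $\pi_k\in T'$ (resp.\ $\alpha_k\in T'$), and then runs the same argument with the fan of Pr\"ufer curves $\{\pi_p\mid p\geq k\}$ (resp.\ adic curves) in the role of the full fan, all subpolygons passing through $\pi_k$ (resp.\ $\alpha_k$) and the generic curve.

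\emph{The locally finite case (3).} Here there is no fountain to anchor the argument. Since $T'$ is locally finite it has a bi-infinite spine $\cdots<c_{-1}<c_0<c_1<\cdots$ — a path of arcs-or-edges of $T'$ with $c_i\to\pm\infty$ — dividing $T'$ into a chain of finite polygons. I would build $\underline\theta$ by exhaustion: inductively mutate the current triangulation (which, being obtained from $T$ by finitely many mutations, is still locally finite) on a finite subpolygon so as to agree with $T'$ on $[c_{-n},c_n]$, choosing these mutations among arcs disjoint from the region $[c_{-(n-1)},c_{n-1}]$ already settled at the previous stage, so that settled arcs are never touched again (Remark~\ref{R:fin. subtr.} provides the finite fix-up, local finiteness of the current triangulation providing a large enough subpolygon to perform it in). Local finiteness of $T'$ then guarantees that every arc of $T$ eventually lands inside some $[c_{-n},c_n]$ and thereafter stabilises, so condition~(3) of Definition~\ref{D:infinite mutation} holds and $\mu_{\underline\theta}(T)=T'$.

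\emph{Main obstacle.} In the fountain cases the only real subtlety is the misalignment of sector decompositions, which the full fan (resp.\ the fan of Pr\"ufer/adic curves) resolves once and for all; the remaining checks — that the subpolygon families are pairwise disjoint in the sense $\A(P_i)\cap\s(P_j)=\varnothing$, exhaust the relevant half-line, and are subpolygons of the intermediate triangulation — are routine. The bulk of the work, and where I expect the fight to be, is (3): one must organise the exhaustion so that extending the match from $[c_{-(n-1)},c_{n-1}]$ to $[c_{-n},c_n]$ never mutates an already-fixed arc and so that each \emph{arc} of $T$ (not merely each vertex) stabilises — this last point being exactly what makes $\underline\theta$ genuinely $T$-admissible rather than a mere limit of finite mutations.
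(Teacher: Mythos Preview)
Your treatment of parts (1) and (2) is exactly the paper's: pass through the full fan at $b$ (resp.\ $a$), using the fountain arcs in $T$ to cut $[b,\infty)$ into finite subpolygons, apply Lemma~\ref{L:union of subpolygons}, then transitivity. For (4) and (5) you again propose a fan intermediate (this time of Pr\"ufer or adic curves), whereas the paper actually runs an \emph{interleaving} argument here, more in the spirit of its proof of (3): it picks $\pi_{k_0}<\pi_{k'_0}<\pi_{k_1}<\pi_{k'_1}<\cdots$ alternately from $T$ and $T'$, applies Lemma~\ref{L:union of subpolygons} on the odd-indexed polygons $[k_{i-1},k_{i+1}]\cup\{\infty\}$ of $T$, and then once more on the polygons $[k'_i,k'_{i+1}]\cup\{\infty\}$. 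Your fan approach works too and is arguably cleaner; just note that Lemmas~\ref{lem:fountaincurve} and~\ref{lem:pushitfurther} are not what you need to locate $k$ --- a right fountain at $\infty$ \emph{by definition} contains infinitely many Pr\"ufer curves, so the existence of $\pi_k\in T'$ is immediate.

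For part (3) your route diverges from the paper, and there is a real gap. The paper does \emph{not} argue by inductive exhaustion along a spine of $T'$; instead it chooses interleaved spine arcs $(a_i,b_i)\in T$ and $(a'_i,b'_i)\in T'$ with $a'_{i+1}<a_i<a'_i<b'_i<b_i<b'_{i+1}$, and applies Lemma~\ref{L:union of subpolygons} \emph{twice}: first on the odd annular polygons $P_i=[a_{i+1},a_{i-1}]\cup[b_{i-1},b_{i+1}]$ of $T$ (targeting the subpolygons $P'_i=[a'_{i+1},a'_i]\cup[b'_i,b'_{i+1}]$ of $T'$), producing an intermediate $\tilde T$ that now shares a full chain of $T'$-spine arcs; then again on the consecutive annuli $Q_i=[a'_i,a'_{i-1}]\cup[b'_{i-1},b'_i]$, which are subpolygons of \emph{both} $\tilde T$ and $T'$. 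Admissibility is thus inherited from the lemma and never has to be argued by hand.

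Your inductive scheme, by contrast, asserts that ``each arc of $T$ eventually lands inside some $[c_{-n},c_n]$ and thereafter stabilises,'' but nothing in the construction forces this. At stage $n$ you mutate inside an annular polygon on the vertex set $[d_n,c_{-(n-1)}]\cup[c_{n-1},e_n]$, where $(d_n,e_n)$ is some long arc of the current (locally finite) triangulation; an arc not yet settled can be carried to an arc with an endpoint near $d_n$ or $e_n$, and at the next stage that endpoint may lie in the new outer annulus, so it gets moved again. The count of newly settled arcs at each stage goes to infinity, but this does not prevent a \emph{particular} arc from being pushed outward indefinitely --- the bijection $T\to T_n$ can, a priori, keep one orbit unstable while settling ever more of the others. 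To salvage your approach you would need to control $d_n,e_n$ (or the mutations themselves) so that each arc's successive images are trapped in a bounded region; the paper's interleaving trick does precisely this by fixing the outer polygon boundaries in advance from the spine of $T$.
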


	\begin{proof}
	
	We start by showing (1):
	Let $T$ be any triangulation with a right fountain at $b \in \ZZ$ and consider the strictly increasing sequence $(k_i)_{i \geq 1}$, where 
	\[
	\{k_i \mid i \geq 1\} = \{m \mid (b,m) \in T\}. 
	\]
 Let further $T'$ be a triangulation such that $T' \mid_{[b,\infty[} = \{(b,k) \mid k \geq b+2\}$. Then, setting $b = k_0$, the sets $P_i = \{b\} \cup [k_{i-1},k_i]$ are finite subpolygons of both $T$ and $T'$ and $\A(P_i) \cap \s(P_j) = \varnothing$ if $i \neq j$. Furthermore, we have $\bigcup_{i \in \ZZ_{>0}}P_i = [b,\infty)$. It follows from Lemma \ref{L:union of subpolygons} that there exists both a $T$-admissible sequence $\underline{\theta}$ with $\mu_{\underline{\theta}}(T)\mid_{[b,\infty)} = T' \mid_{[b,\infty)}$ that leaves all arcs in $T \setminus T\mid_{[b,\infty)} \subseteq T \setminus \bigcup_{i \in \ZZ_{>)}}\A(P_i)$ untouched and symmetrically there exists a $T'$-admissible sequence $\underline{\theta}'$ with $\mu_{\underline{\theta}'}(T')\mid_{[b,\infty)} = T \mid_{[b,\infty)}$ that leaves all arcs in $T' \setminus T'\mid_{[b,\infty)} \subseteq T' \setminus \bigcup_{i \in \ZZ_{>)}}\A(P_i)$ untouched. It follows from Proposition \ref{P:transitivity} that the statement (1) holds for any other triangulation $T'$ with a right fountain at $b$.

	  Item (2) follows from (1) by symmetry.
	  
	  We now show (3): We can pick a sequence $((a_i,b_i))_{i \geq 0}$ in $T$ and a sequence $((a'_i,b'_i))_{i \geq 0}$ in $T'$ such that 
	  \[
	    a'_{i+1} < a_i < a'_i < b'_i < b_i < b'_{i+1}
	  \]
	  for all $i \geq 1$. 
	  For $i \in \ZZ_{>0}$ odd, we set 
	  \[
	    P_i = [a_{i+1},a_{i-1}] \cup [b_{i-1},b_{i+1}].
	  \]
	  These are all finite subpolygons of $T$ and we have $\A(P_i) \cap \s(P_j) = \varnothing$ for odd $i$ and odd $j$ with $i \neq j$. Furthermore, for $i \in \ZZ_{>0}$ odd consider the finite subpolygons 
	  \[
	   P'_i = [a'_{i+1},a'_i] \cup [b'_i,b'_{i+1}] \subseteq P_i
	  \]
	  of $T'$. By Lemma \ref{L:union of subpolygons}, there exists a $T$-admissible sequence $\underline{\theta}$ such that 
	  \[
	   \mu_{\underline{\theta}}(T) \mid_{\bigcup_{i \in \ZZ_{>0} \; \text{odd}}P'_i} = T' \mid_{\bigcup_{i \in \ZZ_{>0} \; \text{odd}}P'_i}.
	  \]
	 Set $\tilde{T} = \mu_{\underline{\theta}}(T)$. Set $Q_0 = [a'_0,b'_0]$ and for $i \in \ZZ_{>0}$ consider the sets 
	 \[
	  Q_i = [a'_i, a'_{i-1}] \cup [b'_{i-1},b'_i].
	 \]

	 Clearly they are finite subpolygons of $T'$. However, they are finite subpolygons of $\tilde{T}$ as well: Indeed, we have $\E(Q_i) \subseteq \E(\IGC) \cup \{(a'_i,b'_i), (a'_{i-1},b'_{i-1})\}$ and $(a'_i,b'_i), (a'_{i-1},b'_{i-1}) \in T' \mid_{P'_{i-1} \cup P'_i} = \tilde{T}\mid_{P'_{i-1} \cup P'_i}$. Furthermore, we have $\A(Q_i) \cap \s(Q_j) = \varnothing$ for all $i \neq j$ and $\bigcup_{i \in \ZZ_{\geq 0}}Q_i = \ZZ \cup \{\pm \infty\}$. By Lemma \ref{L:union of subpolygons} we obtain a $\tilde{T}$-admissible sequence $\underline{\tilde{\theta}}$ with $\mu_{\underline{\tilde{\theta}}}(\tilde{T}) = T'$. By Proposition \ref{P:transitivity} we can precompose the sequence $\underline{\tilde{\theta}}$ with $\underline{\theta}$ to obtain a $T$-admissible sequence $\underline{\gamma}$ with $\mu_{\underline{\gamma}}(T) = T'$. This shows the claim.
	  
	  We now show (4). Assume that both $T$ and $T'$ have a right fountain at $\infty$, the statement can be proved analogously if they have a right fountain at $-\infty$. We can pick a sequence $(\pi_{k_i})_{i \in \ZZ_{\geq 0}}$ from $T$ and a sequence $(\pi_{k'_i})_{i \in \ZZ_{\geq 0}}$ from $T'$ such that for all $i \geq 0$ we have $\pi_{k_i} < \pi_{k'_i} < \pi_{k_{i+1}}$. For $i \in \ZZ_{>0}$ odd consider the sets $P_i = [k_{i-1},k_{i+1}] \cup \{\infty\}$. These are finite subpolygons of $T$ and we have $\A(P_i) \cap \s(P_j) = \varnothing$ for $i \neq j$. Moreover, for $i \geq 0$ odd, the sets $P'_i = [{k'_{i-1}},{k'_{i}}] \cup \{\infty\} \subseteq P_i$ are finite subpolygons of $T'$. By Lemma \ref{L:union of subpolygons} there exists a $T$-admissible sequence $\underline{\theta}$, with $\mu_{\underline{\theta}}(T) \mid_{\bigcup_{i \in \ZZ_{>0} \; \text{odd}}P_i} = T' \mid_{\bigcup_{i \in \ZZ_{>0} \; \text{odd}}P_i}$. Set $\tilde{T} = \mu_{\underline{\theta}}(T)$. The sets $Q_i = [k'_i, {k'_{i+1}}] \cup \{\infty\}$ for $i \geq 0$ are finite subpolygons of both $\tilde{T}$ and $T'$. Furthermore, we have $\A(P_i) \cap \s(P_j) = \varnothing$ for $i \neq j$ and $\bigcup_{i \in \ZZ_{\geq 0}} Q_i = [k'_0, \infty]$. By Lemma \ref{L:union of subpolygons} there exists a $\tilde{T}$-admissible sequence $\underline{\tilde{\theta}}$ with $\mu_{\underline{\tilde{\theta}}}(\tilde{T}) \mid_{[k'_0, \infty]} = T'  \mid_{[k'_0, \infty]}$ and applying \ref{P:transitivity} we can precompose the sequence $\underline{\tilde{\theta}}$ with $\underline{\theta}$ to obtain a $T$-admissible sequence $\underline{\gamma}$ with $\mu_{\underline{\gamma}}(T) = \mid_{[k_0, \infty]} = T'  \mid_{[k_0, \infty]}$. This shows the claim.

	  Item (5) follows from (4) by symmetry.

	\end{proof}

\subsection{Strong mutation equivalence in the infinity-gon}

\begin{theorem}\label{T:strong equiv.cl.}
		Under strong mutation equivalence, every triangulation of the $\infty$-gon $\IG$ belongs to exactly one of the following equivalence classes.
		\begin{itemize}

		\item
			The class $[T_{lf}]$ of locally finite triangulations.

		\item
			The class $[T(a,b)]$ of triangulations with a left fountain at $a$ and a right fountain at $b$ for a unique pair $a,b \in \ZZ$ with $a \leq b$.
			
		\end{itemize}	
	\end{theorem}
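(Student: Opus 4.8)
The plan is to show that the classes listed — the class of all locally finite triangulations, and for each pair $a\le b$ in $\ZZ$ the class $[T(a,b)]$ of triangulations with a left fountain at $a$ and a right fountain at $b$ — are pairwise distinct strong mutation equivalence classes covering every triangulation of $\IG$. Covering is immediate from \cite[Lemma~3.3]{HJ:weakclustertilting}: a triangulation is either locally finite or has a unique left fountain, at some $a\in\ZZ$, and a unique right fountain, at some $b\in\ZZ$; moreover $a\le b$, since if $a>b$ a left-fountain arc $(p,a)$ with $p<b$ would intersect a right-fountain arc $(b,q)$ with $q>a$. It therefore remains to prove (I) any two triangulations of the same type are strongly mutation equivalent, and (II) triangulations of distinct types are not.

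For (I), if $T$ and $T'$ are both locally finite, Lemma~\ref{L:mutation in fountain sections}(3) gives a $T$-admissible sequence taking $T$ to $T'$, hence $T\leq_{s}T'$, and by symmetry $T'\leq_{s}T$. If instead $T$ and $T'$ both have a left fountain at $a$ and a right fountain at $b$, I would first apply Lemma~\ref{L:mutation in fountain sections}(2) to get $T\leq_{s}T_1$ with $T_1\mid_{(-\infty,a]}=T'\mid_{(-\infty,a]}$ and everything else untouched — so $T_1$ still has a left fountain at $a$, a right fountain at $b$, and, when $a<b$, still contains $(a,b)$ — and then Lemma~\ref{L:mutation in fountain sections}(1) to get $T_1\leq_{s}T_2$ with additionally $T_2\mid_{[b,\infty)}=T'\mid_{[b,\infty)}$, the tail over $(-\infty,a]$ being disjoint from $[b,\infty)$ and hence still matched. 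Since a left (respectively right) fountain at $a$ (respectively $b$) forbids any arc with one endpoint $<a$ and one $>a$ (respectively $<b$ and $>b$) — such an arc would cross a fountain arc — every arc of $T_2$ and of $T'$ lies entirely in $(-\infty,a]$, in $[a,b]$, or in $[b,\infty)$. If $b\le a+1$ this already yields $T_2=T'$; if $b\ge a+2$ the only possible discrepancy sits inside the finite polygon on $\{a,\dots,b\}$, which both $T_2$ and $T'$ close off with $(a,b)$, and Remark~\ref{R:fin. subtr.} provides a finite $T_2$-admissible sequence taking $T_2$ to $T'$ without disturbing the matched tails. Transitivity (Proposition~\ref{P:transitivity}) then gives $T\leq_{s}T'$, and symmetrically $T'\leq_{s}T$.

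For (II), by (I) every triangulation is strongly equivalent to a representative of its type, so it suffices to compare representatives, and for $T\not\equiv T'$ it is enough to rule out one of the two directions. Here I would invoke Remark~\ref{R:no mutation}: if some arc of $T$ intersects infinitely many arcs of $T'$, then $T'\not\leq_{s}T$. Let $t_{lf}$ be the locally finite triangulation from the introduction and let $T(a,b)$ denote any triangulation with a left fountain at $a$ and a right fountain at $b$. For $T(a,b)$ against $t_{lf}$: for $k>|a|$ the arc $(-k,k)\in t_{lf}$ crosses every left-fountain arc $(p,a)\in T(a,b)$ with $p<-k$, of which there are infinitely many, so $T(a,b)\not\leq_{s}t_{lf}$. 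For $T(a,b)$ against $T(a',b')$ with $a'<a$ (the cases $a'>a$, and $a=a'$ with $b\ne b'$, being entirely analogous, the latter via the reflection of $\IG$ that swaps left and right fountains): a left-fountain arc $(p,a)\in T(a,b)$ with $p<a'$ crosses every left-fountain arc $(p'',a')\in T(a',b')$ with $p''<p$, hence infinitely many, so $T(a',b')\not\leq_{s}T(a,b)$. This separates all the listed classes.

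The substance of the argument, and the main obstacle, is the bookkeeping in (I): one must track that the intermediate triangulations $T_1,T_2$ retain precisely the fountain structure required for the successive applications of Lemma~\ref{L:mutation in fountain sections}, verify that the three sectors $(-\infty,a]$, $[a,b]$, $[b,\infty)$ genuinely exhaust all arcs (this is exactly where the impossibility of an arc straddling a fountain vertex is used), and ensure the concluding finite-polygon correction leaves the already-matched tails intact — which is what the ``leaves untouched'' clauses in Lemmas~\ref{L:union of subpolygons} and~\ref{L:mutation in fountain sections} and in Remark~\ref{R:fin. subtr.} are designed to supply. Part (II), once the reduction to representatives from part (I) is available, is comparatively short.
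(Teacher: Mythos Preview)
Your proof is correct and follows essentially the same approach as the paper: the covering comes from the Holm--J{\o}rgensen classification, equivalence within a class from Lemma~\ref{L:mutation in fountain sections} together with Remark~\ref{R:fin. subtr.} and Proposition~\ref{P:transitivity}, and separation of classes from Remark~\ref{R:no mutation}. Your write-up is in fact more explicit than the paper's in tracking the intermediate triangulations and the three-sector decomposition in part~(I), and in part~(II) you choose slightly different witnessing arcs (left-fountain rather than right-fountain), but the structure of the argument is the same.
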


	\begin{proof}
		By Theorem \ref{T:classification triangulations of IGC} each triangulation belongs to one of the listed classes.
		It follows directly from Lemma \ref{L:mutation in fountain sections}, Remark \ref{R:fin. subtr.} and Proposition \ref{P:transitivity} that if $T$ and $T'$ are in the same class $[T_{lf}(0)]$ or $[T(a,b)]$ for some
		fixed pair $(a,b)$, then they are strongly mutation equivalent.
		
		Assume now that $T \in [T(a,b)]$ and $T'$ does not have a right fountain at $b$. We use Remark \ref{R:no mutation} in each of the following cases.
		\begin{itemize}
		\item The triangulation $T'$ is locally finite. Then there exists an arc $(i,j) \in T'$ with $i < b < j$, which intersects the infinitely many arcs in the right fountain at $b$ in $T$, so we cannot have $T \leq_s T'$.
				
		\item The triangulation $T'$ has a right fountain at $b'<b$. Then there is an arc $(b',k) \in T'$ with $b < k$, which intersects the infinitely many arcs in the right fountain at $b$ in $T$, so we cannot have $T \leq_s T'$.
		
		\item The triangulation $T'$ has a right fountain at $b'>b$. Then by the previous bullet point we cannot have $T' \leq_s T$.
		\end{itemize}

		By symmetry it follows that $T$ and $T'$ are not strongly mutation equivalent if $T'$ does not have a left fountain at $a$. Therefore, $T$ and $T'$ are not mutation equivalent if they do not belong to the same class 
		$[T_{lf}]$ or $[T(a,b)]$ for some fixed pair $(a,b)$.
		
	\end{proof}

\begin{rmq}\label{R:representants IG}
We can pick representatives of each of the strong equivalence classes of $\IG$ as follows:
			\begin{itemize}
				\item{In $[T_{lf}]$ (cf.\ Figure \ref{fig:locallyfinite}): \[
							t_{lf} = \{(-k,k) \mid k \in \mathbb{Z}_{>0}\} \cup \{(-k,k+1) \mid k \in \mathbb{Z}_{>0}\}.
				                     \]
 
				}
				\item{In $[T(a,b)]$ (cf.\ Figure \ref{fig:fountain}):
				\[
				 t(a,b) = \{(k,a) \mid k \in \mathbb{Z}_{\leq a-2}\} \cup \{(b,k) \mid k \in \mathbb{Z}_{\geq b+2}\} \cup \{(a,k) \mid a+2 \leq k \leq b\}.
				\]
				}

			\end{itemize}
		
\end{rmq}

\begin{figure}
\begin{center}
\begin{tikzpicture}[scale =.65]
\tikzstyle{every node}=[font=\small]
\draw (-7.5,0) -- (7.5,0);
\draw[dotted] (-8.5,0) -- (-7.5,0);
\draw[dotted] (7.5,0) -- (8.5,0);
\draw (0,0.1) -- (0,-0.1) node[below]{$0$};
\draw (1,0.1) -- (1,-0.1) node[below]{$1$};
\draw (-1,0.1) -- (-1,-0.1) node[below]{$-1$};
\draw (2,0.1) -- (2,-0.1) node[below]{$\ldots$};
\draw (-2,0.1) -- (-2,-0.1) node[below]{$\ldots$};
\draw (3,0.1) -- (3,-0.1); 
\draw (4,0.1) -- (4,-0.1); 
\draw (-3,0.1) -- (-3,-0.1) node[below]{$a$};
\draw (5,0.1) -- (5,-0.1) node[below]{$b$};
\draw (6,0.1) -- (6,-0.1);
\draw (-4,0.1) -- (-4,-0.1);
\draw (-5,0.1) -- (-5,-0.1);
\draw (-6,0.1) -- (-6,-0.1);

\path (-1,0) edge [out= 60, in= 120] (1,0);
\path (-2,0) edge [out= 60, in= 120] (2,0);
\path (-3,0) edge [out= 60, in= 120] (3,0);
\path (-4,0) edge [out= 60, in= 120] (4,0);
\path (-5,0) edge [out= 60, in= 120] (5,0);
\path (-6,0) edge [out= 60, in= 120] (6,0);

\path (-1,0) edge [out= 60, in= 120] (2,0);
\path (-2,0) edge [out= 60, in= 120] (3,0);
\path (-3,0) edge [out= 60, in= 120] (4,0);
\path (-4,0) edge [out= 60, in= 120] (5,0);
\path (-5,0) edge [out= 60, in= 120] (6,0);

\node (a) at (7,1) {$\ldots$};
\node (a) at (-7,1) {$\ldots$};

\end{tikzpicture}
\end{center}
\caption{The triangulation $t_{lf}$ of $\IG$}\label{fig:locallyfinite}
\end{figure}

\begin{figure}
\begin{center}
\begin{tikzpicture}[scale =.65]
\tikzstyle{every node}=[font=\small]
\draw (-7.5,0) -- (7.5,0);
\draw[dotted] (-8.5,0) -- (-7.5,0);
\draw[dotted] (7.5,0) -- (8.5,0);
\draw (0,0.1) -- (0,-0.1);
\draw (1,0.1) -- (1,-0.1)node[below]{$b$};
\draw (-1,0.1) -- (-1,-0.1);
\draw (2,0.1) -- (2,-0.1) ;
\draw (-2,0.1) -- (-2,-0.1) node[below]{$a$};
\draw (3,0.1) -- (3,-0.1);
\draw (4,0.1) -- (4,-0.1); 
\draw (-3,0.1) -- (-3,-0.1);
\draw (5,0.1) -- (5,-0.1);
\draw (6,0.1) -- (6,-0.1);
\node (a) at (-7,1) {$\ldots$};
\node (a) at (-7,1) {$\ldots$};
\draw (-4,0.1) -- (-4,-0.1);
\draw (-5,0.1) -- (-5,-0.1);
\draw (-6,0.1) -- (-6,-0.1);

\path (1,0) edge [out= 60, in= 120] (3,0);
\path (1,0) edge [out= 60, in= 120] (4,0);
\path (1,0) edge [out= 60, in= 120] (5,0);
\path (1,0) edge [out= 60, in= 120] (6,0);

\path (-4,0) edge [out= 60, in= 120] (-2,0);
\path (-5,0) edge [out= 60, in= 120] (-2,0);
\path (-6,0) edge [out= 60, in= 120] (-2,0);

\node (a) at (7,1) {$\ldots$};
\node (a) at (-7,1) {$\ldots$};
\path (-2,0) edge [out= 60, in= 120] (1,0);
\path (-2,0) edge [out= 60, in= 120] (0,0);

\node (a) at (7,1) {$\ldots$};
\node (a) at (-7,1) {$\ldots$};

\end{tikzpicture}
\end{center}
\caption{The triangulation $t(a,b)$ of $\IG$}\label{fig:fountain}
\end{figure}

\subsection{Strong mutation equivalence in the completed infinity-gon}

We first provide a classification of the strong mutation equivalence classes of triangulations of the completed infinity-gon. The preorder $\leq_s$ induces a partial order on the set of strong mutation equivalence classes of triangulations of the completed infinity-gon. At the end of this section we describe the structure of the Hasse diagram of this poset. 

\begin{theorem}\label{T:strong equiv.cl.completed}
		Under strong mutation equivalence, every triangulation of $\IGC$ belongs to exactly one of the following equivalence classes.
		\begin{itemize}

		\item
			The class $[T_{lf}]$ of locally finite triangulations.

		\item
			The class $[T(a,b)]$ of triangulations with a left fountain at $a$ and a right fountain at $b$ for a unique pair $(a,b)$ with $a,b \in \mathbb{Z} \cup \{\pm \infty\}$ and $a \leq b$ or $a = \infty, b \in \mathbb{Z}$ or $a \in \mathbb{Z}, b = -\infty$.
			
		\end{itemize}	
	\end{theorem}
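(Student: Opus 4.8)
That the listed classes exhaust the triangulations of $\IGC$ and are pairwise disjoint is immediate from Theorem~\ref{T:classification triangulations of IGC}: a triangulation is either locally finite, or has a left fountain at a unique $a$ and a right fountain at a unique $b$ with $(a,b)$ in exactly the stated range, and these alternatives are mutually exclusive. Since $\leq_s$ is a preorder (Theorem~\ref{T:partial order}), strong mutation equivalence is a genuine equivalence relation and we may use transitivity freely. It remains to prove (A) that any two triangulations in the same listed class are strongly mutation equivalent, and (B) that triangulations from two different listed classes are not.

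\textbf{Sufficiency (A).} I would fix a representative in each class, extending Remark~\ref{R:representants IG}: the locally finite $t_{lf}$, and for each admissible pair $(a,b)$ a triangulation $t(a,b)$ built from a left fountain at $a$, a right fountain at $b$, the coarsest triangulation of the region in between, and the asymptotic arcs forced to be present --- by Lemma~\ref{lem:fountaincurve} the curves $\alpha_a$ (if $a\in\ZZ$) and $\pi_b$ (if $b\in\ZZ$), and always the generic curve $z$ by Remark~\ref{R:generic}. By transitivity it then suffices to show $T\leq_s t$ and $t\leq_s T$ for the representative $t$ of the class containing $T$. The locally finite case is exactly Lemma~\ref{L:mutation in fountain sections}(3). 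For a pair $(a,b)$ I would decompose $T$ and $t(a,b)$ into a compatible family of finite subpolygons: the right-fountain region is standardized by Lemma~\ref{L:mutation in fountain sections}(1) if $b\in\ZZ$ and by part~(4) if $b=\pm\infty$, the left-fountain region by part~(2) or part~(5) accordingly, and the essentially finite portion left over between the two fountain regions --- a union of finite subpolygons whose vertex sets may include $\pm\infty$ --- is standardized using Remark~\ref{R:subtriangulation}, Remark~\ref{R:fin. subtr.} and Lemma~\ref{L:union of subpolygons}; the pieces are then glued together by Proposition~\ref{P:transitivity}, just as in the proofs of Lemma~\ref{L:mutation in fountain sections} and Theorem~\ref{T:strong equiv.cl.}. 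Two points require care here: when a fountain sits at a limit point, parts~(4)--(5) only standardize a tail $\{b\}\cup[k,\infty)$, so one iterates (standardize a tail, then treat the remaining finite middle segment as a further union of subpolygons); and when both fountains are at finite integers, the classification together with maximality forces a ``pinching'' triangle incident to $-\infty$ and $+\infty$, which must be slid to a standard position by finitely many mutations in a finite subpolygon before the middle can be matched up.

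\textbf{Necessity (B).} The engine is Remark~\ref{R:no mutation}: if $T\leq_s T'$ then no arc of $T'$ crosses infinitely many arcs of $T$, so if $T$ and $T'$ are strongly mutation equivalent then neither contains an arc crossing infinitely many arcs of the other; call this being \emph{mutually finitely intersecting}. I would then show mutually finitely intersecting triangulations lie in the same class. The two key observations are: (i) a locally finite triangulation $T$ is straddled over every integer $n$ by an infinite nested chain of arcs --- indeed, if the chain of arcs $(i,j)\in T$ with $i<n<j$ were empty or finite with largest member $(i,j)$, then the arc $(i-1,j+1)$ would be compatible with every arc of $T$ (it meets only arcs crossing $(i,j)$, or larger straddling arcs, or asymptotic arcs, all impossible), contradicting maximality --- so such a $T$ crosses infinitely many arcs through any adic or Pr\"ufer curve, and hence cannot be mutually finitely intersecting with a non-locally-finite triangulation, which always contains such a curve (by Lemma~\ref{lem:fountaincurve}, or directly when a fountain is at $\pm\infty$). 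And (ii) for two non-locally-finite triangulations with fountain data $(a,b)\neq(a',b')$, say $b\neq b'$: locate the relevant asymptotic arc via Lemma~\ref{lem:fountaincurve} and read off from the explicit incompatibility rule of $\IGC$ that the ``innermost'' of the two right fountains is crossed infinitely often by a single arc of the other triangulation, obstructing one of $T\leq_s T'$, $T'\leq_s T$; this mirrors the three-bullet case distinction in the proof of Theorem~\ref{T:strong equiv.cl.}, with additional cases for $b,b'\in\{\pm\infty\}$ (and symmetrically when $a\neq a'$).

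\textbf{Main obstacle.} The delicate part is the limit-point bookkeeping in~(B): the set of arcs crossing infinitely many members of a right fountain at $+\infty$ (a family of Pr\"ufer curves unbounded above) is genuinely different from the integer case --- for instance, no single adic curve need cross infinitely many of them --- so one must instead exploit the companion left fountain guaranteed by Theorem~\ref{T:classification triangulations of IGC}, whereas a single Pr\"ufer curve \emph{does} cross infinitely many members of a right fountain at $-\infty$. Arranging the case analysis over all ordered pairs of classes, including degenerate ones where both fountains sit at the same limit point, so that no case is missed, is the real work; the finite-fountain pinching-triangle issue in~(A) is the other place one must be attentive, but, once noticed, it fits the subpolygon machinery already established and the rest of the sufficiency argument is routine.
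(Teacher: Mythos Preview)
Your proposal is correct and follows essentially the same route as the paper: sufficiency via Lemma~\ref{L:mutation in fountain sections} (parts (1)--(5) as appropriate) together with a finite-subpolygon argument for the middle region and Proposition~\ref{P:transitivity} to glue, and necessity via Remark~\ref{R:no mutation}. You in fact give more detail on the necessity direction than the paper does (it simply says ``follows similarly to the proof of Theorem~\ref{T:strong equiv.cl.}''), and your observation that the limit-point cases of~(B) require using the companion fountain rather than a single asymptotic arc is exactly the extra care needed beyond the $\IG$ argument.
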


	\begin{proof}
	
	If $T, T' \in [T_{lf}]$ they are mutation equivalent by Lemma \ref{L:mutation in fountain sections}(3). 
	
	If $T,T' \in [T(a,b)]$ for $a, b \in \ZZ$, then we have $\pi_b, \alpha_a \in T \cap T'$. By Lemma \ref{L:mutation in fountain sections} (1) and (2), using Proposition \ref{P:transitivity} we have a $T$-admissible sequence $\underline{\theta}$ with $\mu_{\underline{\theta}}(T) \mid_{(-\infty,a] \cup [b,\infty)} = T' \mid_{(-\infty,a] \cup [b,\infty)}$ that leaves all other arcs in $T$ untouched. If $a=b$ then we are done, and otherwise the set $P=[a,b] \cup \{\pm \infty\}$ is a finite subpolygon of both $\mu_{\underline{\theta}}(T)$ and $T'$ and there exists a $\mu_{\underline{\theta}}(T)$-admissible sequence $\underline{\theta}'$ with $\mu_{\underline{\theta}'} \circ \mu_{\underline{\theta}}(T) = T'$. The statement follows by Proposition \ref{P:transitivity}.
	
	Assume now that $T, T' \in [T(a,b)]$ for $a \in \ZZ$ and $b = \infty$ (respectively $b = -\infty$). By Lemma \ref{L:mutation in fountain sections} (1) and (4), using Proposition \ref{P:transitivity} we have a $T$-admissible sequence $\underline{\theta}$ and a $k \in \ZZ$, such that $\pi_k \in T'$ (respectively $\alpha_k \in T'$) with $\mu_{\underline{\theta}}(T) \mid_{(-\infty,a] \cup [k,\infty) \cup \{b\}} = T' \mid_{(-\infty,a] \cup [k,\infty) \cup \{b\}}$ that leaves all other arcs in $T$ untouched. If $k = a$ then we are done, since if two triangulations agree on the set $(-\infty,\infty]$ then they must on all of $[-\infty,\infty]$. Otherwise, we have $a < k$ and the set $[a,k] \cup \{-\infty\} \cup \{b\}$ is a finite subpolygon of both $\mu_{\underline{\theta}}(T)$ and $T'$ and it follows as above that $T$ and $T'$ are mutation equivalent.
	
	With an analogous argument we can show that if $T, T' \in [T(a,b)]$ with $a \in \{\pm \infty\}$ and $b \in \ZZ$, respectively with $a,b \in \{\pm \infty\}$, then they are mutation equivalent.
	
	The rest of the proof follows similarly to the proof of Theorem \ref{T:strong equiv.cl.} by applying Remark \ref{R:no mutation}.

	\end{proof}

\begin{rmq}\label{R:representants}
We can pick representatives of each of the strong equivalence classes of $\IGC$ as follows. Recall that we omit the generic curve for brevity. 
			\begin{itemize}
				\item{For $[T_{lf}]$:
				$t_{lf} = \{(-k,k) \mid k \in \mathbb{Z}_{>0}\} \cup \{(-k,k+1) \mid k \in \mathbb{Z}_{>0}\}$.}

					\item For $[T(a,b)]$ with $a,b \in \ZZ$ and $a \leq b$:
					\begin{eqnarray*}
						\overline{t}(a,b) & = & \{(k,a) \mid k \in \mathbb{Z}_{\leq a-2}\} \; \cup \; \{\alpha_a\} \cup \\&& \{(b,k) \mid k \in \mathbb{Z}_{\geq b+2}\} \; \cup \; \{\pi_k \mid a \leq k \leq b\}.
					\end{eqnarray*}

					\item For $[T(-\infty,b)]$ with $b \in \ZZ$ (cf.\ Figure \ref{fig:-infty,b}):
					\[
						\overline{t}(-\infty,b) = \{\alpha_k \mid k \leq b\} \cup \{(b,k) \mid k \in \mathbb{Z}_{\geq b+2}\} \cup \{\pi_b\}.
					\]

					\item For $[T(\infty,b)]$ with $b \in \ZZ$ (cf.\ Figure \ref{fig:infty,b}):
					\[
						\overline{t}(\infty,b) = \{\pi_k \mid k \leq b\} \cup \{(b,k) \mid k \in \mathbb{Z}_{\geq b+2}\}.
					\]

					\item For $[T(a, \infty)]$ with $a \in \ZZ$:
					\[
						\overline{t}(a, \infty)= \{(k,a) \mid k \in \mathbb{Z}_{\leq a-2}\} \cup \{\alpha_a\} \cup \{\pi_k \mid k \geq a\}.
					\]

					\item For $[T(a, -\infty)]$ with $a \in \ZZ$:
					\[
						\overline{t}(a, -\infty) = \{(k,a) \mid k \in \mathbb{Z}_{\leq a-2}\}  \cup \{\alpha_k \mid k \geq a\}.
					\]

					\item For $[T(-\infty,\infty)]$ (cf.\ Figure \ref{fig:-infty,infty}):
					\[
						\overline{t}(-\infty,\infty) = \{\alpha_k \mid k \leq 0\} \cup \{\pi_k \mid k \geq 0\}.
					\]
					
					\item For $[T(-\infty,-\infty)]$, respectively $[T(\infty,\infty)]$:
					\[
						\overline{t}(-\infty,-\infty) = \{\alpha_k \mid k \in \mathbb{Z}\}, \; \; \text{respectively} \; \; \overline{t}(\infty,\infty) = \{\pi_k \mid k \in \mathbb{Z}\}.
					\]

	\end{itemize}
\end{rmq}

\begin{figure}
\begin{center}
\begin{tikzpicture}[scale =.65]
\tikzstyle{every node}=[font=\small]
\draw (-7.5,0) -- (8.5,0);
\draw[dotted] (-8.5,0) -- (-5.5,0);
\draw[dotted] (8.5,0) -- (9.5,0);

\draw (0,0.1) -- (0,-0.1);
\draw (1,0.1) -- (1,-0.1);
\draw (-1,0.1) -- (-1,-0.1);
\draw (2,0.1) -- (2,-0.1) ;
\draw (-2,0.1) -- (-2,-0.1) node[below]{$b$};
\draw (3,0.1) -- (3,-0.1);
\draw (4,0.1) -- (4,-0.1); 
\draw (5,0.1) -- (5,-0.1);
\draw (6,0.1) -- (6,-0.1);
\node (a) at (-7,1) {$\ldots$};
\node (a) at (7,1) {$\ldots$};
\draw (-4,0.1) -- (-4,-0.1);
\draw (-5,0.1) -- (-5,-0.1);
\draw (-6,0.1) -- (-6,-0.1);

\draw (-3,0.1) -- (-3,-0.1);
\node (-infty) at (-8,2.5) {$\bullet$};
\node (-infty) at (-9,2.5) {$-\infty$};
\node (infty) at (8,2.5) {$\bullet$};
\node (infty) at (9,2.5) {$\infty$};

\path (-2,0) edge [out= 60, in= 120] (0,0);
\path (-2,0) edge [out= 60, in= 120] (1,0);
\path (-2,0) edge [out= 60, in= 120] (2,0);
\path (-2,0) edge [out= 60, in= 120] (3,0);
\path (-2,0) edge [out= 60, in= 120] (4,0);
\path (-2,0) edge [out= 60, in= 120] (5,0);
\path (-2,0) edge [out= 60, in= 120] (6,0);
\draw (8,2.5) -- (0,2.5) to[out=180,in = 90] (-2,0);
\draw (-8,2.5) -- (-4,2.5) to[out=0,in = 90] (-2,0);
\draw (-8,2.5) -- (-5,2.5) to[out=0,in = 90] (-3,0);
\draw (-8,2.5) -- (-6,2.5) to[out=0,in = 90] (-4,0);
\draw (-8,2.5) -- (-7,2.5) to[out=0,in = 90] (-5,0);
\draw (-8,2.5) -- (-8,2.5) to[out=0,in = 90] (-6,0);

\end{tikzpicture}
\end{center}
\caption{The triangulation $\overline{t}(-\infty,b)$ of $\IGC$}\label{fig:-infty,b}
\end{figure}

\begin{figure}
\begin{center}
\begin{tikzpicture}[scale =.65]
\tikzstyle{every node}=[font=\small]
\draw (-7.5,0) -- (8.5,0);
\draw[dotted] (-8.5,0) -- (-5.5,0);
\draw[dotted] (8.5,0) -- (9.5,0);

\draw (0,0.1) -- (0,-0.1);
\draw (1,0.1) -- (1,-0.1);
\draw (-1,0.1) -- (-1,-0.1);
\draw (2,0.1) -- (2,-0.1) ;
\draw (-2,0.1) -- (-2,-0.1) node[below]{$b$};
\draw (3,0.1) -- (3,-0.1);
\draw (4,0.1) -- (4,-0.1); 
\draw (5,0.1) -- (5,-0.1);
\draw (6,0.1) -- (6,-0.1);
\node (a) at (-7,1) {$\ldots$};
\node (a) at (7,1) {$\ldots$};
\draw (-4,0.1) -- (-4,-0.1);
\draw (-5,0.1) -- (-5,-0.1);
\draw (-6,0.1) -- (-6,-0.1);

\draw (-3,0.1) -- (-3,-0.1);
\node (-infty) at (-8,2.5) {$\bullet$};
\node (-infty) at (-9,2.5) {$-\infty$};
\node (infty) at (8,2.5) {$\bullet$};
\node (infty) at (9,2.5) {$\infty$};

\path (-2,0) edge [out= 60, in= 120] (0,0);
\path (-2,0) edge [out= 60, in= 120] (1,0);
\path (-2,0) edge [out= 60, in= 120] (2,0);
\path (-2,0) edge [out= 60, in= 120] (3,0);
\path (-2,0) edge [out= 60, in= 120] (4,0);
\path (-2,0) edge [out= 60, in= 120] (5,0);
\path (-2,0) edge [out= 60, in= 120] (6,0);
\draw (8,2.5) -- (0,2.5) to[out=180,in = 90] (-2,0);
\draw (8,2.5) -- (-1,2.5) to[out=180,in = 90] (-3,0);
\draw (8,2.5) -- (-2,2.5) to[out=180,in = 90] (-4,0);
\draw (8,2.5) -- (-3,2.5) to[out=180,in = 90] (-5,0);
\draw (8,2.5) -- (-4,2.5) to[out=180,in = 90] (-6,0);

\end{tikzpicture}
\end{center}
\caption{The triangulation $\overline{t}(\infty,b)$ of $\IGC$}\label{fig:infty,b}
\end{figure}

\begin{figure}
\begin{center}
\begin{tikzpicture}[scale =.65]
\tikzstyle{every node}=[font=\small]
\draw (-7.5,0) -- (8.5,0);
\draw[dotted] (-8.5,0) -- (-5.5,0);
\draw[dotted] (8.5,0) -- (9.5,0);

\draw (0,0.1) -- (0,-0.1) node[below]{$0$};
\draw (1,0.1) -- (1,-0.1);
\draw (-1,0.1) -- (-1,-0.1);
\draw (2,0.1) -- (2,-0.1) ;
\draw (-2,0.1) -- (-2,-0.1);
\draw (3,0.1) -- (3,-0.1);
\draw (4,0.1) -- (4,-0.1); 
\draw (5,0.1) -- (5,-0.1);
\draw (6,0.1) -- (6,-0.1);
\node (a) at (-7,1) {$\ldots$};
\node (a) at (7,1) {$\ldots$};
\draw (-4,0.1) -- (-4,-0.1);
\draw (-5,0.1) -- (-5,-0.1);
\draw (-6,0.1) -- (-6,-0.1);

\draw (-3,0.1) -- (-3,-0.1);
\node (-infty) at (-8,2.5) {$\bullet$};
\node (-infty) at (-9,2.5) {$-\infty$};
\node (infty) at (8,2.5) {$\bullet$};
\node (infty) at (9,2.5) {$\infty$};

\draw (-8,2.5) -- (-2,2.5) to[out=0,in = 90] (0,0);
\draw (-8,2.5) -- (-3,2.5) to[out=0,in = 90] (-1,0);
\draw (-8,2.5) -- (-4,2.5) to[out=0,in = 90] (-2,0);
\draw (-8,2.5) -- (-5,2.5) to[out=0,in = 90] (-3,0);
\draw (-8,2.5) -- (-6,2.5) to[out=0,in = 90] (-4,0);
\draw (-8,2.5) -- (-7,2.5) to[out=0,in = 90] (-5,0);
\draw (-8,2.5) -- (-8,2.5) to[out=0,in = 90] (-6,0);

\draw (8,2.5) -- (2,2.5) to[out=180,in = 90] (0,0);
\draw (8,2.5) -- (3,2.5) to[out=180,in = 90] (1,0);
\draw (8,2.5) -- (4,2.5) to[out=180,in = 90] (2,0);
\draw (8,2.5) -- (5,2.5) to[out=180,in = 90] (3,0);
\draw (8,2.5) -- (6,2.5) to[out=180,in = 90] (4,0);
\draw (8,2.5) -- (7,2.5) to[out=180,in = 90] (5,0);
\draw (8,2.5) -- (8,2.5) to[out=180,in = 90] (6,0);

\end{tikzpicture}
\end{center}
\caption{The triangulation $\overline{t}(-\infty,\infty)$ of $\IGC$}\label{fig:-infty,infty}
\end{figure}

\begin{prop}\label{P:Hasse diagram}
 The preorder $\leq_s$ induces a partial order on the set of strong mutation equivalence classes of triangulations of the completed infinity-gon. The graph from Figure \ref{fig:Hasse diagram} is, for each $a,b \in \ZZ$ with $a \leq b$, a subdiagram of the Hasse diagram of this poset.
\end{prop}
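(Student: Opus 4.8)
The first assertion is formal: any preorder descends to a partial order on the quotient by the equivalence relation ``$x\le y$ and $y\le x$''. Here $\le_s$ is well defined on strong mutation equivalence classes, since $T_1\sim T_1'$, $T_2\sim T_2'$ and $T_1\le_s T_2$ give $T_1'\le_s T_1\le_s T_2\le_s T_2'$ by transitivity (Theorem~\ref{T:partial order}); reflexivity is clear, transitivity is inherited, and antisymmetry on classes is exactly the definition of strong mutation equivalence. So I would dispatch this in two lines and spend the work on the Hasse diagram.

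Write $\overline t_X$ for the canonical representative of a class $[X]$ from Remark~\ref{R:representants}. Besides $[T(a,b)]$ itself, the classes in Figure~\ref{fig:Hasse diagram} are the ``asymptotic degenerations'' obtained by pushing one or both fountains to $\pm\infty$, namely $[T(a,\infty)]$, $[T(a,-\infty)]$, $[T(-\infty,b)]$, $[T(\infty,b)]$, $[T(-\infty,\infty)]$, $[T(\infty,\infty)]$, $[T(-\infty,-\infty)]$. For each asserted edge $[X]\lessdot[Y]$ I would check three things. (i) $[X]\le_s[Y]$: exhibit a concrete $\overline t_X$-admissible sequence with $\mu_{\underline\theta}(\overline t_X)=\overline t_Y$. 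In every case $\underline\theta$ mutates a single monotone family of strictly asymptotic arcs, one at a time, each arc being touched exactly once, so that admissibility --- in particular condition~(3) of Definition~\ref{D:infinite mutation} --- is immediate: for instance $(\pi_{b+k})_{k\ge 1}$ carries $\overline t(a,\infty)$ to $\overline t(a,b)$ via $\pi_{b+k}\mapsto(b,b+k+1)$, the sequence $(\pi_{a+k})_{k\ge 0}$ carries $\overline t(a,\infty)$ to $\overline t(a,-\infty)$ via $\pi_{a+k}\mapsto\alpha_{a+k+1}$, the relation $\overline t(-\infty,\infty)\le_s\overline t(\infty,\infty)$ is Example~\ref{E:directed}, and the remaining edges follow from these by transitivity and the reflection symmetry $x\mapsto -x$ of $\IGC$, which swaps $\alpha_i\leftrightarrow\pi_{-i}$, $\infty\leftrightarrow-\infty$ and left/right fountains. (ii) Strictness $[Y]\not\le_s[X]$: point to one arc of $\overline t_X$ meeting infinitely many arcs of $\overline t_Y$ and invoke Remark~\ref{R:no mutation}; e.g.\ $\pi_{b+1}\in\overline t(a,\infty)$ crosses every $(b,r)\in\overline t(a,b)$ with $r>b+1$, and $\alpha_0\in\overline t(-\infty,\infty)$ crosses every $\pi_k\in\overline t(\infty,\infty)$ with $k<0$. (iii) That the edge is a covering relation.

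Step (iii) is the crux. Suppose $[X]\le_s[Z]\le_s[Y]$ with $[Z]\notin\{[X],[Y]\}$ and derive a contradiction. Applying Remark~\ref{R:no mutation} to $[Z]\le_s[Y]$: no arc of $\overline t_Y$ may meet infinitely many arcs of $\overline t_Z$, and running through the fountain arcs of $\overline t_Y$ together with the curves $\pi_c,\alpha_c$ it contains (Lemma~\ref{lem:fountaincurve}) forces $Z$ to have no peripheral arc crossing a fountain vertex of $Y$ and no adic, resp.\ Pr\"ufer, curve beyond that vertex; applying it symmetrically to $[X]\le_s[Z]$ constrains the fountains of $Z$ from the other side. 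Together these pin the left- and right-fountain positions of $Z$ down to those of $X$ or of $Y$, so $[Z]\in\{[X],[Y]\}$ by the classification of strong mutation equivalence classes (Theorem~\ref{T:strong equiv.cl.completed}) --- contradiction. The same mechanism also rules out the non-drawn edges; in particular $[T_{lf}]$ is comparable to no fountain class, because every representative of a fountain class contains a Pr\"ufer or an adic curve (Lemma~\ref{lem:fountaincurve}), which crosses infinitely many arcs of any locally finite triangulation, while conversely any locally finite triangulation contains infinitely many peripheral arcs crossing any given vertex, hence crossing any fountain. The main obstacle is precisely this step: one must run the ``nothing strictly in between'' analysis uniformly over the a priori infinitely many candidate intermediate classes $[T(a',b')]$, carefully tracking which arcs the two-sided applications of Remark~\ref{R:no mutation} force into, or exclude from, $Z$; by comparison the admissible-sequence constructions in (i) and the strictness checks in (ii) are mechanical once the representatives are fixed.
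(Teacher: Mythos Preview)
Your proposal does more than the paper asks, and more than the paper's own proof delivers. The paper's argument is minimal: it notes that the induced relation on classes is trivially a partial order, and then for each edge drawn in Figure~\ref{fig:Hasse diagram} it simply exhibits an explicit admissible sequence between the chosen representatives (e.g.\ $\underline\theta_1=(\alpha_{-i})_{i\ge0}$ takes $\overline t(-\infty,\infty)$ to $\overline t(\infty,\infty)$, $\underline\theta_7=(\pi_{b+i})_{i\ge1}$ takes $\overline t(a,\infty)$ to $\overline t(a,b)$, and so on). That is all: the paper does \emph{not} verify strictness, does not verify that the edges are covering relations, and explicitly defers the check that there are no further edges to a remark after the proof, left to the reader. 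So your steps (ii) and (iii) go beyond what the paper proves, though they are arguably what ``subdiagram of the Hasse diagram'' literally requires.

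Two wobbles in your write-up are worth flagging. First, your second illustrative sequence, $(\pi_{a+k})_{k\ge0}$ taking $\overline t(a,\infty)$ to $\overline t(a,-\infty)$, is a valid relation $[T(a,\infty)]\le_s[T(a,-\infty)]$ but does not correspond to any edge in Figure~\ref{fig:Hasse diagram}; including it suggests you may have misread which edges the figure actually draws. Second, your claim that ``$[T_{lf}]$ is comparable to no fountain class'' is false as stated: the figure itself records edges $[T(\infty,\infty)]\le_s[T_{lf}]$ and $[T(-\infty,-\infty)]\le_s[T_{lf}]$, and the paper exhibits the sequences $\underline\theta_5,\underline\theta_6$ witnessing them. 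What is true, and what your argument via Remark~\ref{R:no mutation} does show, is that $[T_{lf}]$ is incomparable to any $[T(a',b')]$ with at least one of $a',b'$ an integer. If you trim your proof to the paper's scope, your step (i) alone, with the correct list of edges, suffices.
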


\begin{figure}
\begin{center}
 \begin{tikzpicture}
  \node[draw, rectangle] (0) at (0,0) {$[T(-\infty, \infty)]$};
  \node[draw, rectangle] (1a) at (-2,1.5) {$[T(-\infty,-\infty)]$};
  \node[draw, rectangle] (1b) at (2,1.5) {$[T(\infty,\infty)]$};
  \node[draw, rectangle] (2a) at (-5,3) {$[T(-\infty,b)]$};
  \node[draw, rectangle] (2b) at (5,3) {$[T(a, \infty)]$};
  \node[draw, rectangle] (3a) at (-2,3) {$[T(a, -\infty)]$};
  \node[draw, rectangle] (3b) at (2,3) {$[T(\infty,b)]$};
  \node[draw, rectangle] (4) at (0,3) {$[T_{lf}]$};
  \node[draw, rectangle] (5) at (0,4.5) {$[T(a,b)]$};
  
  \draw (0) -- (1a);
  \draw (0) -- (1b);
  \draw (1a) -- (3a);
  \draw (1b) -- (3b);
  \draw (1a) -- (4);
  \draw (1b) -- (4);
  \draw (0) edge[bend left] (2a);
  \draw (0) edge[bend right] (2b);
  \draw (2a) -- (5);
  \draw (2b) -- (5);
  
 \end{tikzpicture}
 \end{center}
 \caption{For $a \leq b$ with $a,b \in \mathbb{Z}$ this forms a subdiagram of the Hasse diagram of strong mutation equivalence classes of triangulations of $\IGC$ with respect to the order $\leq_s$}\label{fig:Hasse diagram}
 \end{figure}

\begin{proof}
 Clearly the relation $\leq_s$ describes a partial order on the set of strong mutation equivalence classes. To show that our diagram is a subgraph of the Hasse diagram of this poset, using the notation from \ref{R:representants} we pick representatives and explicitely write down admissible sequences along which we can mutate one into another. 
 
 Setting $\underline{\theta}_1 = (\alpha_{-i})_{i \geq 0}$ and $\underline{\theta}_2 = (\pi_{i})_{i \geq 0}$ we have 
  \[
   \mu_{\underline{\theta}_1}(\overline{t}(-\infty,\infty)) = \overline{t}(\infty,\infty) \; \; \text{and} \; \; \mu_{\underline{\theta}_2}(\overline{t}(-\infty,\infty)) = \overline{t}(-\infty,-\infty).
  \]
 Let $t_a = \{\alpha_k \mid k \leq a\} \cup \{\pi_k \mid k \geq a\}$ and $t_b = \{\alpha_k \mid k \leq b\} \cup \{\pi_k \mid k \geq b\}$. We have $t_a, t_b \in [T(-\infty,\infty)]$.
  Setting $\underline{\theta}_3 = (\alpha_{-i})_{i \geq a-1}$ and $\underline{\theta}_4 = (\pi_{i})_{i \geq b+1}$ we have 
  \[
   \mu_{\underline{\theta}_3}(t_a) = \overline{t}(a,\infty) \; \; \text{and} \; \; \mu_{\underline{\theta}_4}(t_b) = \overline{t}(-\infty,b).
  \]
  Setting $\underline{\theta}_5 = (\pi_0, (\pi_i, \pi_{-i})_{i \geq 1})$ and $\underline{\theta}_6 = (\alpha_{0}, (\alpha_i, \alpha_{-i})_{i \geq 0})$ we have 
  \[
   \mu_{\underline{\theta}_5}(\overline{t}(\infty,\infty)) = t_{lf} \; \; \text{and} \; \; \mu_{\underline{\theta}_6}(\overline{t}(-\infty,-\infty)) = t_{lf}.
  \]
 Setting $\underline{\theta}_7 = (\pi_{b+i})_{i \geq 1}$ and $\underline{\theta}_8 = (\alpha_{a-i})_{i \geq 1 }$ we have 
  \[
   \mu_{\underline{\theta}_7}(\overline{t}(\infty,\infty)) = \overline{t}(\infty,b) \; \; \text{and} \; \; \mu_{\underline{\theta}_8}(\overline{t}(-\infty,-\infty)) = \overline{t}(a,-\infty).
  \]
  Setting $t = \{\alpha_k \mid k \leq a\} \cup \{\pi_k \mid a \leq k \leq b\} \mid \{(b,k) \mid k \geq b+2\} \in [T(-\infty,b)]$, we get
  \[
    \mu_{\underline{\theta}_7}(\overline{t}(a,\infty)) = \overline{t}(a,b) \; \; \text{and} \; \; \mu_{\underline{\theta}_8}(t) = \overline{t}(a,b).
  \]

\end{proof}

In fact, it is straightforward to check that there are no other edges in the Hasse diagram, using Remark \ref{R:no mutation}. As we will not need this in the rest of the paper, we leave this as an exercise to the interested reader.

\section{Completed mutations}

The restriction when solely considering mutations along admissible sequences is twofold: First, not all triangulations of $\IGC$ are strongly mutation equivalent and second, mutating a triangulation along an admissible sequence does not in general yield a triangulation.
In this section we fix the latter issue by providing a method to complete the mutation $\mu_{\underline{\theta}}(T)$ of a triangulation $T$ along a $T$-admissible sequence $\underline{\theta}$ to a triangulation.

\begin{lem}\label{L:finite parts}
		Let $T$ be a triangulation of $\IGC$ and let $\underline{\theta} = (\theta_i)_{i \in I}$ be a $T$-admissible sequence. If $(m,l) \in \mu_{\underline{\theta}}(T)$ is a peripheral arc, then the set of arcs 
			\[
				\mu_{\underline{\theta}}(T)\mid_{[m,l]}=\{(a,b) \in \mu_{\underline{\theta}}(T) \mid m \leq a < b \leq l\}
			\]
		with endpoints in $[m,l]$ is a triangulation of the polygon with endpoints $m, m+1, \ldots, l$.
	\end{lem}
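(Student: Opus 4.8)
There are only finitely many arcs with both endpoints in $\{m,m+1,\ldots,l\}$, so $D := \mu_{\underline{\theta}}(T)\mid_{[m,l]}$ is a finite set, and it consists of pairwise non-crossing arcs by Remark~\ref{R:infinite mutation}; the content of the lemma is that $D$ is moreover \emph{maximal} among such sets. The plan is to identify $D$ with a finite subtriangulation of a sufficiently long finite initial segment of the mutation sequence, where Remark~\ref{R:subtriangulation} applies. By Remark~\ref{R:infinite mutation} the assignment $\mu_{\underline{\theta}}\colon T \to \mu_{\underline{\theta}}(T)$ is a bijection; for each $\delta \in D \cup \{(m,l)\}$ write $\gamma_{\delta} \in T$ for its unique preimage, and fix $N \in I$ at least as large as every stabilisation index $l_{\gamma_{\delta}}$ of Definition~\ref{D:infinite mutation}(3). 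Put $T_k = \mu_{\theta_k}\circ\cdots\circ\mu_{\theta_1}(T)$; this is a triangulation for each $k$, and each $\mu_{\theta_k}$ restricts to a bijection $T_{k-1}\to T_k$. For all $k \ge N$ we have $\mu_{\theta_k}\circ\cdots\circ\mu_{\theta_1}(\gamma_{\delta}) = \mu_{\underline{\theta}}(\gamma_{\delta}) = \delta$; in particular $(m,l)\in T_k$, so $[m,l]$ is a finite subpolygon of $T_k$, and $\theta_j \neq (m,l)$ for every $j > N$ (otherwise the image of $\gamma_{(m,l)}$ would change after stage $N$, contradicting condition (3)). By Remark~\ref{R:subtriangulation}, $T_N\mid_{[m,l]}$ is a triangulation of the polygon on $m,m+1,\ldots,l$, so it suffices to prove $D = T_N\mid_{[m,l]}$.

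\textbf{The key claim.} Since $\mu_{\theta_N}\circ\cdots\circ\mu_{\theta_1}$ is a bijection $T \to T_N$, the set $T_N\mid_{[m,l]}$ equals $\{\,\mu_{\theta_N}\circ\cdots\circ\mu_{\theta_1}(\eta) : \eta\in T,\ \mu_{\theta_N}\circ\cdots\circ\mu_{\theta_1}(\eta)\ \text{has both endpoints in}\ [m,l]\,\}$. It therefore suffices to show that any $\eta \in T$ whose image under $\mu_{\theta_N}\circ\cdots\circ\mu_{\theta_1}$ has both endpoints in $[m,l]$ must be one of the $\gamma_{\delta}$, $\delta \in D$; for then $T_N\mid_{[m,l]} = \{\mu_{\underline{\theta}}(\gamma_{\delta}) : \delta \in D\} = D$ (the reverse containment being clear, as $\mu_{\theta_N}\circ\cdots\circ\mu_{\theta_1}(\gamma_{\delta}) = \delta$). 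This in turn follows from the one-step statement: \emph{if $(p,q) := \mu_{\theta_{j-1}}\circ\cdots\circ\mu_{\theta_1}(\eta)$ has both endpoints in $[m,l]$ for some $j-1 \ge N$, then so does $\mu_{\theta_j}\circ\cdots\circ\mu_{\theta_1}(\eta)$} — applied repeatedly, this shows the trajectory of $\eta$ stays inside $[m,l]$ from stage $N$ on, so $\mu_{\underline{\theta}}(\eta)$ has both endpoints in $[m,l]$ and $\eta = \gamma_{\delta}$ for some $\delta$. The one-step statement is trivial unless $\theta_j = (p,q)$, so suppose $\theta_j = (p,q)$, where $m \le p < q \le l$ and, since $\theta_j \neq (m,l)$, also $(p,q) \neq (m,l)$. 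Because $(m,l)\in T_{j-1}$ is compatible with $(p,q)$ and with every arc of the quadrilateral $S((p,q)) \subseteq T_{j-1}$ (Remark~\ref{R:mutable}), the whole quadrilateral lies inside the region of the disc cut off by $(m,l)$ and the boundary segment through $m+1,\ldots,l-1$; hence its four vertices all lie in $\{m,\ldots,l\}$, and so the other diagonal $\mu_{\theta_j}((p,q))$ again has both endpoints in $[m,l]$.

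\textbf{Conclusion and main difficulty.} Combining the two containments gives $T_N\mid_{[m,l]} = D$, and since the left-hand side is a triangulation of the polygon with vertices $m,m+1,\ldots,l$ by Remark~\ref{R:subtriangulation}, so is $D$, which proves the lemma. The only genuinely delicate point is the one-step statement of the second paragraph: no mutation performed after stage $N$ can push an arc out of the region cut off by the peripheral arc $(m,l)$. Its proof combines admissibility condition (3), which makes $(m,l)$ a permanent member of all $T_k$ with $k \ge N$, with the local, quadrilateral-based description of mutation from Remark~\ref{R:mutable}; everything else is bookkeeping with the bijections $\mu_{\theta_k}$ and $\mu_{\underline{\theta}}$.
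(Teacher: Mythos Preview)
Your proof is correct and follows essentially the same strategy as the paper: pick a stage $N$ after which $(m,l)$ is permanently present, and identify $\mu_{\underline{\theta}}(T)\mid_{[m,l]}$ with the finite subtriangulation $T_N\mid_{[m,l]}$, which is a triangulation of the polygon by Remark~\ref{R:subtriangulation}. The only real difference is that you spell out carefully the ``one-step invariance'' (no mutation after stage $N$ can move an arc out of the region cut off by $(m,l)$), whereas the paper's three-line argument asserts the equality $\mu_{\underline{\theta}}(T)\mid_{[m,l]} = T_{k_2}\mid_{[m,l]}$ directly and leaves this point implicit; your version is the more complete of the two.
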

	
	\begin{proof}
		For notational simplicity, for any $k \in \mathbb{Z}$ set $T_k = \mu_{\theta_{k}} \circ \ldots \circ \mu_{\theta_1}(T)$.
		There exists a $k_1 \in \mathbb{Z}_{>0}$ such that $(m,n) \in T_l$ for all $l \geq k_1$. There are finitely many arcs 
		$\gamma_1, \ldots \gamma_j$ in the subtriangulation $T_{k_1}\mid_{[m,l]}$ of $T_{k_1}$. Thus there exists a $k_2 \geq k_1$ such that for all $l \geq k_2$ and all $1 \leq i \leq j$ we have
			\[
				\mu_{\theta_l} \circ \ldots \circ \mu_{\theta_1}(\gamma_i) = \mu_{\theta_{k_2}} \circ \ldots \circ \mu_{\theta_1}(\gamma_i).
			\]
		Therefore $\mu_{\underline{\theta}}(T)\mid_{[m,l]} =T_{k_2}\mid_{[m,l]}$, and since $T_{k_2}$ is a triangulation with finite subpolygon $[m,l]$ by Remark \ref{R:subtriangulation} this proves the claim.
	\end{proof}

 We now provide a method to complete a mutated triangulation by adding arcs until we obtain a triangulation. Such a completion is by no means unique and we could just complete by randomly adding arcs that do not intersect any of the arcs already contained in our mutated triangulation. However, the existence of strictly asymptotic arcs in $\IGC$ lends itself to a somewhat natural completion via Pr\"ufer curves and adic curves.  We use the following auxiliary sets:
	\begin{eqnarray*}
		\mathcal{P}(\mu_{\underline{\theta}}(T)) &=& \{\pi_k \text{ Pr\"ufer curve} \mid \pi_k \text{ intersects no arc in } \mu_{\underline{\theta}}(T)\} \\ 
		\mathcal{A}(\mu_{\underline{\theta}}(T)) &=& \{\alpha_k \text{ adic curve} \mid \alpha_k \text{ intersects no arc in } \mu_{\underline{\theta}}(T)\} \\ 
		\tilde{\mathcal{P}}(\mu_{\underline{\theta}}(T)) &=& \{\pi_k \text{ Pr\"ufer curve} \mid \pi_k \text{ intersects no arc in } \mu_{\underline{\theta}}(T) \cup \mathcal{A}						(\mu_{\underline{\theta}}(T))\} \\ 
		\tilde{\mathcal{A}}(\mu_{\underline{\theta}}(T)) &=& \{\alpha_k \text{ adic curve} \mid \alpha_k \text{ intersects no arc in } \mu_{\underline{\theta}}(T) \cup \mathcal{P}						(\mu_{\underline{\theta}}(T))\}.
	\end{eqnarray*}
	
	\begin{defi}
		We call the set of arcs
			\[
				\overline{\mu_{\underline{\theta}}(T)}^P = \mu_{\underline{\theta}}(T) \cup \mathcal{P}(\mu_{\underline{\theta}}(T)) \cup \tilde{\mathcal{A}}(\mu_{\underline{\theta}}(T))
			\]
		the {\em Pr\"ufer-completion of $\mu_{\underline{\theta}}(T)$}. Analogously, we call the set of arcs 
			\[
				\overline{\mu_{\underline{\theta}}(T)}^a = \mu_{\underline{\theta}}(T) \cup \mathcal{A}(\mu_{\underline{\theta}}(T)) \cup \tilde{\mathcal{P}}(\mu_{\underline{\theta}}(T))
			\]
		the {\em adic completion of $\mu_{\underline{\theta}}(T)$}.
	\end{defi}
	
	\begin{rmq}
	 In general the Pr\"ufer and adic completion do not coincide.
	\end{rmq}

From now on we only consider Pr\"ufer completions. Adic completions are the dual concept and all of the following results hold for adic completions by symmetry. From now on we write 
$\overline{\mu_{\underline{\theta}}}(T) = \overline{\mu_{\underline{\theta}}(T)}^P$ and call it the {\em completed mutation of $T$ along $\underline{\theta}$.}

	\begin{theorem}\label{T:completions are triangulations}
		Let $T$ be a triangulation of $\IGC$ and let $\underline{\theta}$ be a $T$-admissible sequence. Then the completed mutation of $T$ along $\underline{\theta}$ is a triangulation of $\IGC$. 
	\end{theorem}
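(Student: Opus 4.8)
\emph{Strategy and compatibility.}
The plan is to verify directly that $\overline{\mu_{\underline{\theta}}}(T)$ is a maximal set of pairwise compatible arcs of $\IGC$. Pairwise compatibility is immediate from the construction: the arcs of $\mu_{\underline{\theta}}(T)$ are mutually non-intersecting by Remark~\ref{R:infinite mutation}; any two Pr\"ufer curves share the point $\infty$ and any two adic curves share the point $-\infty$, so these pairs are compatible; and every remaining pair --- an arc of $\mu_{\underline{\theta}}(T)$ against a curve of $\mathcal{P}(\mu_{\underline{\theta}}(T))$ or of $\tilde{\mathcal{A}}(\mu_{\underline{\theta}}(T))$, and a curve of $\mathcal{P}(\mu_{\underline{\theta}}(T))$ against one of $\tilde{\mathcal{A}}(\mu_{\underline{\theta}}(T))$ --- is compatible by the very definitions of the sets $\mathcal{P}$ and $\tilde{\mathcal{A}}$.

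\emph{Maximality: the easy cases.}
Let $\gamma \in \A(\IGC)$ be compatible with every arc of $\overline{\mu_{\underline{\theta}}}(T)$; I must show $\gamma \in \overline{\mu_{\underline{\theta}}}(T)$, and I argue by the type of $\gamma$. If $\gamma = z$, then $z$ is not mutable (Proposition~\ref{prop:mutIGC}), hence does not occur in $\underline{\theta}$ and is left untouched by Lemma~\ref{L:untouched equivalent}; since $z \in T$ (Remark~\ref{R:generic}) this gives $z \in \mu_{\underline{\theta}}(T)$. If $\gamma = \pi_k$ is a Pr\"ufer curve, then $\gamma$ is in particular compatible with every arc of $\mu_{\underline{\theta}}(T)$, so $\gamma \in \mathcal{P}(\mu_{\underline{\theta}}(T))$. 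If $\gamma = \alpha_k$ is an adic curve, then $\gamma$ is compatible with every arc of $\mu_{\underline{\theta}}(T) \cup \mathcal{P}(\mu_{\underline{\theta}}(T))$, so $\gamma \in \tilde{\mathcal{A}}(\mu_{\underline{\theta}}(T))$. In each case $\gamma$ lies in $\overline{\mu_{\underline{\theta}}}(T)$.

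\emph{Maximality: the peripheral case.}
The remaining, and principal, case is $\gamma = (m,l)$ peripheral; here the plan is to reduce to maximality inside a finite polygon. The key first observation is that, since $(m,l)$ crosses no arc of $\mu_{\underline{\theta}}(T)$, every arc of $\mu_{\underline{\theta}}(T)$ meeting a vertex of $\{m+1,\dots,l-1\}$ has both of its endpoints in $[m,l]$; in particular only finitely many arcs of $\mu_{\underline{\theta}}(T)$ meet that set and none of those vertices carries a left or right fountain. Writing $T_k = \mu_{\theta_k}\circ\cdots\circ\mu_{\theta_1}(T)$, I would then produce a finite subpolygon $P$ of $\overline{\mu_{\underline{\theta}}}(T)$ with $m,l \in P$ on which $\overline{\mu_{\underline{\theta}}}(T)$ restricts to a triangulation of the polygon on $P$: if $[m,l] \subseteq [p,q]$ for some peripheral arc $(p,q) \in \mu_{\underline{\theta}}(T)$ one takes $P=[p,q]$ and invokes Lemma~\ref{L:finite parts}; otherwise one uses the generic curve $z$ as the outer side and suitable asymptotic curves of $\overline{\mu_{\underline{\theta}}}(T)$ to close off a finite region $P = \{-\infty, y_1, y_1+1,\dots,y_r,\infty\}$ with $y_1 \le m$, $y_r \ge l$ and $\alpha_{y_1},\pi_{y_r} \in \overline{\mu_{\underline{\theta}}}(T)$, running the stabilization argument behind Lemma~\ref{L:finite parts} on the triangulations $T_k$ to check that $\overline{\mu_{\underline{\theta}}}(T)\mid_P$ triangulates the polygon on $P$. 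In either case $(m,l)$ is a peripheral arc with endpoints in $P$ compatible with all of $\overline{\mu_{\underline{\theta}}}(T)\mid_P$, so by maximality of that finite triangulation $(m,l) \in \overline{\mu_{\underline{\theta}}}(T)\mid_P \subseteq \overline{\mu_{\underline{\theta}}}(T)$; combining the four cases proves the theorem.

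\emph{Where the difficulty lies.}
I expect the main obstacle to be exactly the construction of $P$, and the verification that $\overline{\mu_{\underline{\theta}}}(T)$ --- not merely $\mu_{\underline{\theta}}(T)$ --- restricts to a genuine triangulation on it in the case with no peripheral roof. Controlling the asymptotic curves there requires Lemmas~\ref{lem:fountaincurve} and~\ref{lem:pushitfurther}, together with the fact that a single mutation can neither create nor destroy a fountain, so that the behaviour of the $T_k$ near $[m,l]$ is sufficiently stable to guarantee that the completion supplies precisely the adic and Pr\"ufer curves needed to bound a finite region around $[m,l]$; it is also here that one must re-run the argument of Lemma~\ref{L:finite parts} for polygons that involve the points at $\pm\infty$ and whose sides include newly added asymptotic curves, noting that all such sides are eventually present and untouched along the sequence $(T_k)$. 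Once $P$ is in place the rest is routine.
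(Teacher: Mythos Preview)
Your overall architecture matches the paper's: compatibility is by construction, maximality is checked case by case on the type of $\gamma$, and the strictly asymptotic and generic cases are immediate. The roofed peripheral subcase is also identical to the paper's, via Lemma~\ref{L:finite parts}.

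The genuine gap is in the unroofed peripheral subcase. Your plan is to enclose $[m,l]$ in a finite subpolygon $P$ whose outer sides include asymptotic curves supplied by the completion, and then ``re-run the argument of Lemma~\ref{L:finite parts}'' on the triangulations $T_k$, claiming that ``all such sides are eventually present and untouched along the sequence $(T_k)$.'' This last assertion is false in general: an asymptotic curve lying in $\mathcal{P}(\mu_{\underline{\theta}}(T))\setminus \mu_{\underline{\theta}}(T)$ (or in $\tilde{\mathcal{A}}\setminus \mu_{\underline{\theta}}(T)$) need not belong to any $T_k$. Example~\ref{E:not a triangulation} already exhibits this: $\pi_1$ is added only at the completion step, while every $T_k$ still contains arcs $(0,j)$ with $j$ large that cross $\pi_1$. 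Consequently $P$ is not a finite subpolygon of $T_k$ for large $k$, and the stabilisation mechanism behind Lemma~\ref{L:finite parts} does not apply.

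The paper avoids building such a polygon altogether. Once one checks (as you essentially note) that in the unroofed case $\pi_m,\pi_l\in\mathcal{P}(\mu_{\underline{\theta}}(T))$ or $\alpha_m,\alpha_l\in\tilde{\mathcal{A}}(\mu_{\underline{\theta}}(T))$, the decisive observation is the contrapositive: for every $m<j<l$ the curve $\pi_j$ (respectively $\alpha_j$) \emph{crosses} $(m,l)$, hence is \emph{not} in the completion, hence must cross some arc of $\mu_{\underline{\theta}}(T)$; that arc is forced to have both endpoints in $[m,l]$. Starting from $j=m+1$ and taking $n=\max\{\,j\mid (m,j)\in\mu_{\underline{\theta}}(T)\,\}$, a short maximality argument then yields $n=l$, so $(m,l)\in\mu_{\underline{\theta}}(T)$ outright. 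This is both shorter and sidesteps the issue of asymptotic sides that never appear in any $T_k$.
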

	
	\begin{proof}
		Assume that an arc $\gamma$ intersects no arc in $\overline{\mu_{\underline{\theta}}}(T)$. We will show that then $\gamma$ itself must lie in 
		$\overline{\mu_{\underline{\theta}}}(T)$. If $\gamma$ is a Pr\"ufer curve, then $\gamma \in \mathcal{P}(\mu_{\underline{\theta}}(T))$ and if it is an adic curve, then 
		$\gamma \in \tilde{\mathcal{A}}(\mu_{\underline{\theta}}(T))$, 
		thus in particular $\gamma$ lies in $\overline{\mu_{\underline{\theta}}}(T)$.
		
		Assume thus that $\gamma = (m,l)$ with $m<l \in \mathbb{Z}$ is a peripheral arc. If there exists a peripheral arc $(m',l') \in \mu_{\underline{\theta}}(T)$ with $m' \leq m < l \leq l'$, 		
		then by Lemma \ref{L:finite parts} we have
		$\gamma = (m,l) \in \mu_{\underline{\theta}}(T) \subseteq \overline{\mu_{\underline{\theta}}}(T)$ and we are done. 
		
		Otherwise, if there exists no such arc $(m',l')$, it is straight-forward to check that we have $\pi_m, \pi_l \in \mathcal{P}(\mu_{\underline{\theta}}(T))$ or $\alpha_m, \alpha_l \in \tilde{\mathcal{A}}(\mu_{\underline{\theta}}(T))$. Without loss of generality assume the former is the case. 
		Set 
			\[
				n = \max \{j \mid (m,j) \in \mu_{\underline{\theta}}(T)\}.
			\] 
		Observe that the set over which we take the maximum is not empty: Because $\gamma$ intersects no arc in $\overline{\mu_{\underline{\theta}}}(T)$, we have 
		$\pi_{m+1} \notin \mathcal{P}(\mu_{\underline{\theta}}(T))$, therefore there is an arc 
		$(a,b) \in \mu_{\underline{\theta}}(T)$ that intersects $\pi_{m+1}$ but not $(m,l)$ nor $\pi_m$ nor $\pi_l$, so $m \leq a < m+1 < b \leq l$, and thus 
		$(a,b) = (m,b) \in \mu_{\underline{\theta}}(T)$. 
		
		Assume as a contradiction that $n \neq l$. By the same argument as above for $n$ instead of $m+1$, there is an arc $(a,b) \in \mu_{\underline{\theta}}(T)$ with 
		$m \leq a < n < b \leq l$. However, if $m < a$ then this would imply that $(m,n)$ and $(a,b)$ intersect, contradicting the assumption. Therefore we have $a = m$ and 
		$(m,b) \in \mu_{\underline{\theta}}(T)$ contradicting the maximality of $n$. Thus in fact
		we must have $n = l$ and $(m,l) \in \mu_{\underline{\theta}}(T) \subseteq \overline{\mu_{\underline{\theta}}}(T)$.
	\end{proof}

\begin{rmq}\label{R:one limit point completed}
 In the combinatorial model of the $\infty$-gon with only one limit point at $\infty$ (cf.\ Remark \ref{R:one limit point}) we can define a unique completion: Assume that $T$ is a triangulation of the $\infty$-gon with one added point at $\infty$ and let $\underline{\theta}$ be a $T$-admissible sequence. Denoting the arc connecting a point $a \in \ZZ$ with $\infty$ by $(a,\infty)$, we define the {\em completed mutation of $T$ along $\underline{\theta}$} to be
 \[
  \overline{\mu_{\underline{\theta}}}(T) = \mu_{\underline{\theta}}(T) \cup \{(a, \infty) \mid a \in \ZZ \;\text{and $(a,\infty)$ does not intersect any arc in}\; \mu_{\underline{\theta}}(T)\}.
 \]
 This is a triangulation of the $\infty$-gon with one point at $\infty$; this follows analogously to Theorem \ref{T:completions are triangulations}.
 
 Because of its links with the representation theory of the polynomial ring, we are in particular interested in our example of the completed $\infty$-gon $\IGC$ where 
we have  two limit points at $\pm \infty$. Note however that, with minor adaptations, all statements in the rest of this paper hold for the $\infty$-gon with one limit point at $\infty$.
\end{rmq}

An important example of completed mutations is moving a right fountain one step to the right, and dually, moving a left fountain one step to the left.

\begin{lem}\label{L:pushing}
 Let $a, b \in \ZZ$ and let $T \in [T(a,b)]$ be a triangulation of $\IGC$. Then there exist $T$-admissible sequences $\underline{a}^-$ and $\underline{b}^+$ such that $\overline{\mu_{\underline{a}^-}}(T) \in [T(a-1,b)]$ and $\overline{\mu_{\underline{b}^+}}(T) \in [T(a,b+1)]$.
\end{lem}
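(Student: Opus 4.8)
\emph{The plan.} I would reduce everything to the representative $\overline{t}(a,b)$ of the class $[T(a,b)]$ from Remark~\ref{R:representants} and exhibit the pushing sequences explicitly there. The point making this reduction legitimate is that the completed mutation $\overline{\mu_{\underline\theta}}(T)$ depends only on the \emph{set} $\mu_{\underline\theta}(T)$, not on the sequence $\underline\theta$. So, given an arbitrary $T\in[T(a,b)]$, Theorem~\ref{T:strong equiv.cl.completed} supplies a $T$-admissible sequence $\underline\tau$ with $\mu_{\underline\tau}(T)=\overline{t}(a,b)$, and if $\underline\sigma^{\pm}$ is an $\overline{t}(a,b)$-admissible sequence realising the required push, then interlacing $\underline\tau$ with $\underline\sigma^{\pm}$ exactly as in the proof of Proposition~\ref{P:transitivity} (that construction only uses that $\overline{t}(a,b)$ is a triangulation and that $\underline\sigma^{\pm}$ is $\overline{t}(a,b)$-admissible, \emph{not} that the outcome is a triangulation) produces a $T$-admissible sequence $\underline b^{+}$, resp.\ $\underline a^{-}$, with $\mu_{\underline b^{+}}(T)=\mu_{\underline\sigma^{+}}(\overline{t}(a,b))$, resp.\ $\mu_{\underline a^{-}}(T)=\mu_{\underline\sigma^{-}}(\overline{t}(a,b))$, as sets; hence $\overline{\mu_{\underline b^{+}}}(T)=\overline{\mu_{\underline\sigma^{+}}}(\overline{t}(a,b))$ and $\overline{\mu_{\underline a^{-}}}(T)=\overline{\mu_{\underline\sigma^{-}}}(\overline{t}(a,b))$, and it suffices to treat $\overline{t}(a,b)$.

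\emph{The explicit sequences.} Recall $\overline{t}(a,b)=\{(k,a)\mid k\le a-2\}\cup\{\alpha_a\}\cup\{(b,k)\mid k\ge b+2\}\cup\{\pi_k\mid a\le k\le b\}$. For the right push I would take $\underline\sigma^{+}=((b,b+1+i))_{i\ge 1}$. One checks by induction that at step $i$ the arc $(b,b+1+i)$ is the diagonal of the quadrilateral on the vertices $b,\,b+1,\,b+1+i,\,b+2+i$, so $\underline\sigma^{+}$ is $\overline{t}(a,b)$-admissible and each $(b,b+1+i)$ gets replaced by $(b+1,b+2+i)$; thus
\[
\mu_{\underline\sigma^{+}}(\overline{t}(a,b))=\{(k,a)\mid k\le a-2\}\cup\{\alpha_a\}\cup\{(b+1,k)\mid k\ge b+3\}\cup\{\pi_k\mid a\le k\le b\}.
\]
Reading off the non-crossing conditions, the Pr\"ufer curves crossing no arc of this set are exactly $\{\pi_k\mid a\le k\le b+1\}$, and $\alpha_a$ is the only adic curve crossing none of those; hence $\overline{\mu_{\underline\sigma^{+}}}(\overline{t}(a,b))=\overline{t}(a,b+1)\in[T(a,b+1)]$. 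Dually, for the left push I would take $\underline\sigma^{-}=((a-1-i,a))_{i\ge 1}$, which replaces each $(a-1-i,a)$ by $(a-2-i,a-1)$, so that
\[
\mu_{\underline\sigma^{-}}(\overline{t}(a,b))=\{(k,a-1)\mid k\le a-3\}\cup\{\alpha_a\}\cup\{(b,k)\mid k\ge b+2\}\cup\{\pi_k\mid a\le k\le b\},
\]
and here the Pr\"ufer completion adds no new Pr\"ufer curve (the candidate $\pi_{a-1}$ already crosses $\alpha_a$) and adds precisely the adic curve $\alpha_{a-1}$. By Theorem~\ref{T:completions are triangulations} the outcome is a triangulation; it contains the left fountain $\{(k,a-1)\mid k\le a-3\}$ and the right fountain $\{(b,k)\mid k\ge b+2\}$, so by Theorems~\ref{T:classification triangulations of IGC} and~\ref{T:strong equiv.cl.completed} it lies in $[T(a-1,b)]$.

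\emph{The main obstacle.} The admissibility verifications for $\underline\sigma^{\pm}$ and the crossing bookkeeping in the completions are routine. What genuinely needs care is twofold. First, $\mu_{\underline\sigma^{\pm}}(\overline{t}(a,b))$ is \emph{not} a triangulation (compare Example~\ref{E:not a triangulation}), so the reduction to arbitrary $T$ cannot cite Proposition~\ref{P:transitivity} as a black box; one must invoke its interlacing construction together with the observation that the completion only sees the mutated set. Second, the two directions are not symmetric for the Pr\"ufer completion: in the left push the arc $\alpha_a$ survives and blocks $\pi_{a-1}$, so $\overline{\mu_{\underline a^{-}}}(T)$ is in general not the chosen representative $\overline{t}(a-1,b)$ but merely some triangulation in its class, which is why its class must be identified through the classification of triangulations rather than by a direct equality with $\overline{t}(a-1,b)$.
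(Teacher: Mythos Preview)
Your argument is correct and follows essentially the same route as the paper: reduce to the representative $\overline{t}(a,b)$ via strong mutation equivalence, use the explicit pushing sequences $((a-k,a))_{k\ge 2}$ and $((b,b+k))_{k\ge 2}$, and invoke the interlacing construction behind Proposition~\ref{P:transitivity} to transport these back to an arbitrary $T\in[T(a,b)]$. Your treatment is in fact slightly more careful than the paper's in two respects: you make explicit that the interlacing construction does not need the target $\mu_{\underline\sigma^{\pm}}(\overline t(a,b))$ to be a triangulation (only that $\underline\sigma^{\pm}$ is $\overline t(a,b)$-admissible and that completion depends only on the resulting set of arcs), and you correctly observe that the Pr\"ufer completion of the left push retains $\alpha_a$, so $\overline{\mu_{\underline a^-}}(\overline t(a,b))$ lies in $[T(a-1,b)]$ without literally equalling $\overline t(a-1,b)$, whereas the paper states equality.
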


\begin{proof}
 The triangulation $T$ is strongly mutation equivalent to the triangulation $\overline{t}(a,b)$ from Remark \ref{R:representants}. Assume that $\underline{\theta}$ is a $T$-admissible sequence with $\mu_{\underline{\theta}_1}(T) = \overline{t}(a,b)$. Consider now the $\overline{t}(a,b)$-admissible sequence $\underline{\alpha} = ((a-k,a))_{k \geq 2}$; we have $\overline{\mu_{\underline{\alpha}}}(\overline{t}(a,b)) = \overline{t}(a-1,b)$. It follows by Proposition \ref{P:transitivity} that \[\overline{\mu_{\underline{\theta} \cup \underline{\alpha}}}(T) = \overline{t}(a-1,b) \in [T(a-1,b)].\] Symmetrically, considering the $\overline{t}(a,b)$-admissible sequence $\underline{\beta}=((b,b+k))_{k \geq 2}$ we have \[\overline{\mu_{\underline{\theta} \cup \underline{\beta}}}(T) = \overline{t}(a,b+1) \in [T(a,b+1)].\]
\end{proof}

\section{Transfinite mutations}

Completed mutations along admissible sequences provide new connections between triangulations of $\IGC$.
However, we cannot pass freely between strong mutation equivalence classes of triangulations of $\IGC$ via completed mutations. This can be fixed if we consider admissible compositions of admissible sequences.

\begin{defi}
	Let $T$ be a triangulation of $\IGC$. We call a sequence $\overline{\theta} = (\underline{\theta^i})_{i \in I}$ of admissible sequences (where throughout this paper we assume $I = \{1, \ldots, n\}$ or $I = \mathbb{Z}_{> 0}$) a
	{\em $T$-admissible composition of completed mutations}, if, setting $T_1 = T$, for all $i \in I$ the sequence $\underline{\theta^i}$ is $T_i$-admissible, where for $i \geq 1$ we set
		\begin{eqnarray*}
			T_{i+1} &=& \overline{\mu_{\underline{\theta^{i}}}}(T_{i}).
		\end{eqnarray*}
	The {\em transfinite mutation of $T$ along $\overline {\theta}$} is the set
	\[
	\mu_{\overline{\theta}}(T)  = \bigcup_{i \in I}\{\gamma \in T_i \mid \underline{\theta}^k \; \text{leaves $\gamma$ untouched for all}\; k \geq i\}.
	\]
\end{defi}

\begin{rmq}\label{R:transf.nc}
 A transfinite mutation of a triangulation of $\IGC$ consists of mutually non-intersecting arcs. Indeed, with the notation as above, if $\alpha, \beta \in \mu_{\overline{\theta}}(T)$ then there exists a $k \in I$ such that $\alpha, \beta \in T_k$, which is a triangulation.
 
 However, a transfinite mutation of a triangulation is not necessarily a triangulation. Indeed, a $T$-admissible sequence can be interpreted as a $T$-admissible composition of completed mutations of length one, and we already know from Example \ref{E:not a triangulation}, that the mutation of a triangulation along an admissible sequence is not necessarily a triangulation.
\end{rmq}

\begin{rmq}\label{R:precomposing transfinite}
Precomposing a transfinite mutation with finite sequences of completed mutations gives rise to a transfinite mutation: Let $T$ and $T'$ be triangulations of $\IGC$ such that there exists a finite $T$-admissible composition of completed mutations $\overline{\alpha} = (\underline{\alpha_1}, \ldots, \underline{\alpha_n})$ with $\mu_{\overline{\alpha}}(T) = T'$. If there is a $T'$-admissible sequence of completed mutations $\overline{\beta}$ with $\mu_{\overline{\beta}}(T')=T''$ then the sequence $\overline{\gamma} = (\underline{\alpha_1}, \ldots, \underline{\alpha_n}, \overline{\beta})$ is a $T$-admissible sequence of completed mutations with $\mu_{\overline{\gamma}}(T) = T''$.
\end{rmq}

Similarly, postcomposing a transfinite mutation with finite sequences of completed mutations gives rise to a transfinite mutation. To show this, the following results are useful.

\begin{lem}\label{L:commute completed}
 Let $T$ be a triangulation of $\IGC$ and let $\delta \in T$ be mutable and $S(\delta)$ be the quadrilateral in $T$ with diagonal $\delta$. If $\underline{\theta}$ is a $T$-admissible sequence which leaves all arcs in $\{\delta\} \cup S(\delta)$ untouched then $\underline{\theta}$ is $\mu_{\delta}(T)$-admissible with $\overline{\mu_{\underline{\theta}}}(\mu_{\delta}(T)) = \mu_{\delta}(\overline{\mu_{\underline{\theta}}}(T))$.
\end{lem}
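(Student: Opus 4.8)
The plan is to mimic the proof of Lemma~\ref{L:untouched} (which establishes the analogous statement for ordinary, non-completed mutations) and then show that the Pr\"ufer-completion is compatible with the extra mutation at $\delta$. First I would invoke Lemma~\ref{L:untouched} directly: since $\delta \in T$ is mutable and $\underline{\theta}$ leaves every arc of $\{\delta\} \cup S(\delta)$ untouched, $\underline{\theta}$ is $\mu_{\delta}(T)$-admissible and
\[
 \mu_{\underline{\theta}}(\mu_{\delta}(T)) = \mu_{\delta}(\mu_{\underline{\theta}}(T)).
\]
This already disposes of the ``uncompleted'' content. What remains is purely a statement about which Pr\"ufer and adic curves get added in the completion, i.e.\ that
\[
 \overline{\mu_{\underline{\theta}}(\mu_{\delta}(T))}^P = \mu_{\delta}\bigl(\overline{\mu_{\underline{\theta}}(T)}^P\bigr),
\]
keeping in mind that the right-hand side makes sense because $\delta$ being mutable in $T$ is still mutable in $\overline{\mu_{\underline{\theta}}(T)}^P$ (its quadrilateral $S(\delta)$ sits inside $\mu_{\underline{\theta}}(T)$, hence inside the completion, as $\underline{\theta}$ leaves $S(\delta)$ untouched, and $\delta$ itself is untouched so lies in $\mu_{\underline{\theta}}(T)$).

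The key observation for the completion part is that the two triangulations $\mu_{\underline{\theta}}(\mu_{\delta}(T))$ and $\mu_{\underline{\theta}}(T) = \mu_{\underline\theta}(T)$ differ only in the single peripheral (necessarily peripheral, since strictly asymptotic arcs wrap fountains in a region untouched by a finite quadrilateral --- actually one must check $\delta$ is peripheral or handle the asymptotic case) arc: one contains $\delta$, the other contains $\delta' = \mu_\delta(\delta)$, and $\delta,\delta'$ are the two diagonals of the same quadrilateral $S(\delta)$ whose sides lie in both sets. Hence a Pr\"ufer curve $\pi_k$ intersects an arc of $\mu_{\underline{\theta}}(\mu_{\delta}(T))$ iff it intersects an arc of $\mu_{\underline\theta}(T)$: indeed $\pi_k$ intersects $\delta$ iff it intersects some side of the quadrilateral $S(\delta)$, and likewise for $\delta'$, because $\delta$ and $\delta'$ span the same convex region bounded by $S(\delta) \cup \E(\IGC)$. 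The same is true with the adic curves, and with $\mu_{\underline\theta}(T) \cup \mathcal{A}(\cdot)$ in place of $\mu_{\underline\theta}(T)$. Therefore the four auxiliary sets $\mathcal{P},\mathcal{A},\tilde{\mathcal P},\tilde{\mathcal A}$ are literally the same whether computed from $\mu_{\underline\theta}(\mu_\delta(T))$ or from $\mu_{\underline\theta}(T)$, and consequently
\[
 \overline{\mu_{\underline{\theta}}(\mu_{\delta}(T))}^P = \bigl(\mu_{\underline\theta}(T) \setminus \{\delta\} \cup \{\delta'\}\bigr) \cup \mathcal{P} \cup \tilde{\mathcal A} = \mu_\delta\bigl(\overline{\mu_{\underline\theta}(T)}^P\bigr),
\]
using that $\delta \notin \mathcal{P}\cup\tilde{\mathcal A}$ (it is peripheral) and $\delta'\notin\mathcal P\cup\tilde{\mathcal A}$ likewise. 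That $\overline{\mu_{\underline\theta}(\mu_\delta(T))}^P$ is genuinely a triangulation follows from Theorem~\ref{T:completions are triangulations}.

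The main obstacle I anticipate is the careful bookkeeping around the claim ``$\pi_k$ intersects $\delta$ iff it intersects a side of $S(\delta)$'': this needs the geometric fact that $\delta$ together with the edges of its quadrilateral bounds a region (a quadrilateral, possibly degenerate when some sides are edges in $\E(\IGC)$), so any arc or asymptotic curve crossing $\delta$ must enter and leave that region through its boundary, and the boundary arcs lie in $\mu_{\underline\theta}(T)$ by the untouched hypothesis; one has to treat the degenerate cases where one or two sides of $S(\delta)$ are boundary edges rather than arcs. A secondary point to nail down is that $\delta$ (and hence $\delta'$) is peripheral: if $\delta$ were strictly asymptotic it would wrap a fountain by Proposition~\ref{prop:mutIGC}, so it would not be mutable --- except $\delta$ is assumed mutable, so by Proposition~\ref{prop:mutIGC} it is not wrapping a fountain, but it could still be e.g.\ an adic curve $\alpha_m$ at a vertex with no left fountain; in that case one argues separately, noting the completion sets still coincide because the quadrilateral $S(\alpha_m)$ (which by the proof of Proposition~\ref{prop:mutIGC} involves $\pi_m$ and $z$, or two adic curves) is left untouched, so the same Pr\"ufer/adic curves are excluded. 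Once the intersection-pattern invariance is established the equality of the completions is immediate.
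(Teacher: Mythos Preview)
Your proposal is correct and follows essentially the same approach as the paper: invoke Lemma~\ref{L:untouched} to obtain $\mu_{\underline{\theta}}(\mu_{\delta}(T)) = \mu_{\delta}(\mu_{\underline{\theta}}(T))$, then argue that the auxiliary sets $\mathcal{P}$ and $\tilde{\mathcal{A}}$ agree for the two underlying sets because any strictly asymptotic curve crossing $\delta$ or $\delta'$ must already cross a side of $S(\delta)$. Your treatment is in fact more careful than the paper's, which simply asserts the equality of the completion sets in a single displayed chain; you correctly flag the case where $\delta$ itself is strictly asymptotic (in which case the auxiliary sets differ by exactly $\{\delta\}$ or $\{\delta'\}$, though the final unions still coincide), a subtlety the paper's terse display glosses over.
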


\begin{proof}
 By Lemma \ref{L:untouched} the sequence $\underline{\theta}$ is $T$-admissible with $\mu_{\underline{\theta}}(\mu_{\delta}(T)) = \mu_{\delta}(\mu_{\underline{\theta}}(T))$. Let $\delta' \neq \delta$ be the other diagonal in the quadrilateral $S(\delta)$. We have
  \begin{eqnarray*}
   \overline{\mu_{\underline{\theta}}}(\mu_{\delta}(T))
			&=& \mu_{\underline{\theta}}(\mu_{\delta}(T)) \cup \mathcal{P}(\mu_{\underline{\theta}}(\mu_{\delta}(T))) \cup \tilde{\mathcal{A}}(\mu_{\underline{\theta}}(\mu_{\delta}(T)))\\
			&=& \mu_\delta (\mu_{\underline{\theta}}(T)) \cup \mathcal{P}(\mu_\delta (\mu_{\underline{\theta}}(T))) \cup \tilde{\mathcal{A}}(\mu_\delta (\mu_{\underline{\theta}}(T)))\\
			&=& ((\mu_{\underline{\theta}}(T) \cup \{\delta'\}) \setminus \{\delta\}) \cup \mathcal{P}(\mu_{\underline{\theta}}(T)) \cup \tilde{\mathcal{A}}(\mu_{\underline{\theta}}(T))\\
			&=& ((\overline{\mu_{\underline{\theta}}}(T) \cup \{\delta'\}) \setminus \{\delta\})
			= \mu_{\delta}(\overline{\mu_{\underline{\theta}}}(T)).
  \end{eqnarray*}
\end{proof}

\begin{prop}\label{P:precomposing transfinite}
 Let $T$ be a triangulation of $\IGC$ and let $\overline{\theta} = (\underline{\theta}^i)_{i \in I}$
 be a $T$-admissible composition of completed mutations such that $\mu_{\overline{\theta}}(T)=T'$ is a triangulation. If $\delta \in T'$ is mutable, then there exists an $r \in I$ such that for all $l \geq r$ the sequence
 \[
  \overline{\theta} \cup_l \{\delta\} = (\underline{\theta}^1, \ldots, \underline{\theta}^{l-1}, (\delta), \underline{\theta}^l, \underline{\theta}^{l+1}, \ldots) 
 \]
 is a $T$-admissible composition of completed mutations with $\mu_{\overline{\theta} \cup_l (\delta)}(T) = \mu_{\delta}(T')$.
\end{prop}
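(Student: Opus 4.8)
The plan is to mimic the proof of Lemma~\ref{L:precomposing} one level higher up, with Lemma~\ref{L:commute completed} playing the role that Lemma~\ref{L:untouched} played there. Write $T_1 = T$ and $T_{i+1} = \overline{\mu_{\underline{\theta}^i}}(T_i)$, so that by definition $\mu_{\overline{\theta}}(T) = T'$ is the union over $i \in I$ of the sets of arcs $\gamma \in T_i$ which all $\underline{\theta}^k$ with $k \geq i$ leave untouched. Since $\delta \in T'$ is mutable, the quadrilateral $S(\delta)$ in $T'$ with diagonal $\delta$ satisfies $\{\delta\} \cup S(\delta) \subseteq T'$ by Remark~\ref{R:mutable}, and this is a finite set of arcs; hence there is an $r \in I$ with $\{\delta\} \cup S(\delta) \subseteq T_l$ for all $l \geq r$ and such that every $\underline{\theta}^k$ with $k \geq r$ leaves all arcs of $\{\delta\} \cup S(\delta)$ untouched. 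By uniqueness of the quadrilateral a mutable arc is a diagonal of, $S(\delta)$ is then also the quadrilateral in $T_l$ with diagonal $\delta$ for every $l \geq r$.

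Fix $l \geq r$ and consider $\overline{\theta} \cup_l \{\delta\} = (\underline{\theta}^1, \ldots, \underline{\theta}^{l-1}, (\delta), \underline{\theta}^l, \underline{\theta}^{l+1}, \ldots)$. Its first $l-1$ terms are unchanged, and the inserted term $(\delta)$ is applied to $T_l$, where $\delta$ is mutable; since a single mutation of a triangulation at a mutable arc is already a triangulation, the Pr\"ufer-completion adds nothing and $\overline{\mu_{(\delta)}}(T_l) = \mu_{\delta}(T_l)$. The crucial step is then to commute $\mu_{\delta}$ past all later completed mutations: since $l+j \geq r$, Lemma~\ref{L:commute completed} applies with the triangulation $T_{l+j}$, the mutable arc $\delta$ and the $T_{l+j}$-admissible sequence $\underline{\theta}^{l+j}$, and gives inductively that $\underline{\theta}^{l+j}$ is $\mu_{\delta}(T_{l+j})$-admissible with $\overline{\mu_{\underline{\theta}^{l+j}}}(\mu_{\delta}(T_{l+j})) = \mu_{\delta}(T_{l+j+1})$. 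Hence $\overline{\theta} \cup_l \{\delta\}$ is a $T$-admissible composition of completed mutations whose successive triangulations are $T_1, \ldots, T_l, \mu_{\delta}(T_l), \mu_{\delta}(T_{l+1}), \ldots$.

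It then remains to show $\mu_{\overline{\theta} \cup_l \{\delta\}}(T) = \mu_{\delta}(T') = (T' \setminus \{\delta\}) \cup \{\delta'\}$, where $\delta'$ is the other diagonal of $S(\delta)$. I would check both inclusions arc by arc, using Lemma~\ref{L:untouched equivalent} throughout: ``untouched'' amounts to ``no term of the sequence equals the arc'', so whether a tail $\underline{\theta}^{l+j}$ is read as a $T_{l+j}$-admissible or a $\mu_{\delta}(T_{l+j})$-admissible sequence is irrelevant to which arcs it leaves untouched. Concretely, an arc $\gamma \in T' \setminus \{\delta\}$ lies in some $T_{i}$ and is untouched by all later terms of $\overline{\theta}$, hence (being distinct from $\delta$, it is also untouched by the inserted $(\delta)$) it is untouched by all later terms of $\overline{\theta} \cup_l \{\delta\}$ and so belongs to the transfinite mutation; the new diagonal $\delta'$ appears in $\mu_{\delta}(T_l)$ and is left untouched by every $\underline{\theta}^{l+j}$, which is exactly the content of equation~\eqref{delta} in the proof of Lemma~\ref{L:untouched}, so $\delta' \in \mu_{\overline{\theta} \cup_l \{\delta\}}(T)$; and $\delta$ itself is touched by the inserted term $(\delta)$, so it is excluded. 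Running the same observations in the other direction gives the reverse inclusion.

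The only real difficulty here is bookkeeping: keeping the indexing of the interlaced composition straight and checking at each stage that ``leaving $\{\delta\} \cup S(\delta)$ untouched'' survives the intervening completion step, so that the hypotheses of Lemma~\ref{L:commute completed} remain satisfied. All of the genuinely new content is isolated in that lemma and in the choice of $r$; once they are in place the remaining argument is a routine, if somewhat lengthy, induction.
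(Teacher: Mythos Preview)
Your proposal is correct and follows essentially the same approach as the paper: both choose $r$ so that $\{\delta\}\cup S(\delta)$ lies in $T_l$ and is left untouched by all later $\underline{\theta}^k$, then use Lemma~\ref{L:commute completed} inductively to identify the tail of the new composition with $\mu_\delta(T_l),\mu_\delta(T_{l+1}),\ldots$, and finally verify that the resulting transfinite mutation is $(T'\setminus\{\delta\})\cup\{\delta'\}$. The only cosmetic difference is that the paper packages the final step as a chain of set equalities rather than as two inclusions.
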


\begin{proof}
 Set $T_1 = T$ and for $i \in I$ set $T_{i+1} = \overline{\mu_{\theta^i}}(T_i)$. Since $\delta \in T'$ is exchangeable, we have $S(\delta) \cup \{\delta\} \subseteq T'$ and thus there exists an $r \in I$ such that $S(\delta) \cup \{\delta\} \subseteq T_r$ and for all $k \geq r$ the sequence $\underline{\theta}^k$ leaves all arcs in $S(\delta) \cup \{\delta\}$ untouched. Pick $l \geq r$ and consider the sequence $\overline{\theta} \cup_l \{\delta\}$. We first show that this is a $T$-admissible composition of admissible sequence. This is a direct consequence of the following three observations.
 
 Observation 1: For all $1 \leq i \leq l-1$ the sequence $\underline{\theta}^i$ is $T_i$-admissible.
 
 Observation 2: Since we have $S(\delta) \cup \{\delta\} \subseteq T_l$, the arc $\delta$ is mutable in $T_l$. Therefore the sequence $(\delta)$ is $T_l$-admissible.
 
 Observation 3: For $k \geq l$ set $\tilde{T}_k = \mu_{\delta}(T_k)$. By Lemma \ref{L:commute completed} the sequence $\underline{\theta}_k$ is $\tilde{T}_k$-admissible and we have
 \[
  \overline{\mu_{\underline{\theta}^k}}(\tilde{T}_k) = \overline{\mu_{\underline{\theta}^k}}(\mu_{\delta}(T_k)) = \mu_{\delta}(\overline{\mu_{\underline{\theta}^k}}(T_k)) = \mu_{\delta}(T_{k+1}) = \tilde{T}_{k+1}.
 \]
 
We notice that $\delta' \in \tilde{T}_l$ and for all $i \geq l$ the sequence $\underline{\theta}^i$ leaves $\delta'$ untouched (since it is $T_i$-admissible and it leaves $S(\delta) \cup \{\delta\} \subseteq T_i$ untouched). We obtain
 \begin{eqnarray*}
 \mu_{\overline{\theta} \cup_l (\delta)}(T) &=& \bigcup_{i \geq l}\{\gamma \in \tilde{T}_i \mid \underline{\theta}^k \; \text{leaves $\gamma$ untouched for all}\; k \geq i\}\\
 &=& \bigcup_{i \geq l}\{\gamma \in \tilde{T}_i \setminus \{\delta'\} \mid \underline{\theta}^k \; \text{leaves $\gamma$ untouched for all}\; k \geq i\} \cup \{\delta'\}\\
 &=&  \bigcup_{i \geq l}\{\gamma \in T_i \setminus \{\delta\} \mid \underline{\theta}^k \; \text{leaves $\gamma$ untouched for all}\; k \geq i\} \cup \{\delta'\}\\
 &=& (T' \setminus \{\delta\}) \cup \{\delta'\} = \mu_{\delta}(T'),
 \end{eqnarray*}
which proves the claim.
                                                                                                                                                                                                                                                                                                                                                                                                                                                                                                                                                                                                                                                                                                                                                                           
\end{proof}

\begin{prop}\label{P:transitivity transfinite}
	Let $T$ and $T'$ be triangulations of $\IGC$ such that there exists a $T$-admissible composition of completed mutations $\overline{\alpha}$ with $\mu_{\overline{\alpha}}(T) = T'$. If $\underline{\beta}$ is a $T'$-admissible sequence with $\overline{\mu_{\underline{\beta}}}(T') = T''$, then there exists a $T$-admissible composition of completed mutations  $\overline{\gamma}$ with $\mu_{\overline{\gamma}}(T) = T''$.
\end{prop}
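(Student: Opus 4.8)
The plan is to follow the same strategy as in the proof of Proposition~\ref{P:transitivity}, with Proposition~\ref{P:precomposing transfinite} taking the role of Lemma~\ref{L:precomposing} and Remark~\ref{R:precomposing transfinite} taking the role of the concatenation of finite sequences there. Write $\overline{\alpha} = (\underline{\alpha}^i)_{i \in I_\alpha}$ and $\underline{\beta} = (\beta_j)_{j \in I_\beta}$. If $I_\alpha$ is finite, say $\overline{\alpha} = (\underline{\alpha}^1, \ldots, \underline{\alpha}^n)$, then $\overline{\gamma} = (\underline{\alpha}^1, \ldots, \underline{\alpha}^n, \underline{\beta})$ is a $T$-admissible composition of completed mutations and, since $T_{n+1} = T'$, Remark~\ref{R:precomposing transfinite} gives $\mu_{\overline{\gamma}}(T) = \overline{\mu_{\underline{\beta}}}(T') = T''$. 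If instead $I_\beta$ is finite, I would insert $\beta_1, \ldots, \beta_m$ one after another into $\overline{\alpha}$ by repeatedly invoking Proposition~\ref{P:precomposing transfinite}; the resulting $T$-admissible composition $\overline{\gamma}$ then satisfies $\mu_{\overline{\gamma}}(T) = \mu_{\beta_m} \circ \cdots \circ \mu_{\beta_1}(T')$, and since $T'$ is a triangulation a finite sequence of single mutations of it is again a triangulation, so this already equals $\overline{\mu_{\underline{\beta}}}(T') = T''$.

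So assume $I_\alpha = I_\beta = \ZZ_{>0}$, and construct $\overline{\gamma}$ by interlacing, exactly as in Proposition~\ref{P:transitivity}. Using Proposition~\ref{P:precomposing transfinite} together with the strengthening recorded in Remark~\ref{R:precomposing}, I would choose $0 = l_0 < l_1 < l_2 < \cdots$ so that, inserting the one-term sequence $(\beta_k)$ immediately after the block $\underline{\alpha}^{l_k}$, each finite composition
\[
 \overline{\alpha} \cup (\beta_1, \ldots, \beta_j) = (\underline{\alpha}^1, \ldots, \underline{\alpha}^{l_1}, (\beta_1), \underline{\alpha}^{l_1+1}, \ldots, \underline{\alpha}^{l_j}, (\beta_j), \underline{\alpha}^{l_j + 1}, \ldots)
\]
is $T$-admissible with $\mu_{\overline{\alpha} \cup (\beta_1, \ldots, \beta_j)}(T) = \mu_{\beta_j} \circ \cdots \circ \mu_{\beta_1}(T')$, and moreover each $l_k$ is large enough that $(\underline{\alpha}^i)_{i > l_k}$ leaves every arc of $\{\beta_k\} \cup S(\beta_k)$ untouched, where the quadrilateral is taken in $\mu_{\overline{\alpha}\cup(\beta_1,\ldots,\beta_{k-1})}(T)$. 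Lemma~\ref{L:commute completed} and Lemma~\ref{L:untouched} are the inductive tools that let a completed mutation carried out after an $\alpha$-block be commuted past the finitely many single mutations $\mu_{\beta_1}, \ldots, \mu_{\beta_k}$ already inserted. Letting $\overline{\gamma}$ be the evident limiting composition, every finite truncation of $\overline{\gamma}$ coincides with a finite truncation of some $\overline{\alpha} \cup (\beta_1, \ldots, \beta_j)$, hence $\overline{\gamma}$ is a $T$-admissible composition of completed mutations.

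The remaining, and hardest, step is to show $\mu_{\overline{\gamma}}(T) = T''$. For the inclusion $\supseteq$ restricted to $\mu_{\underline{\beta}}(T')$ I would argue as follows: any such arc is $\mu^{T'}_{\underline{\beta}}(\gamma)$ for some $\gamma \in T'$; since $\gamma$ stabilises both under $\overline{\alpha}$ and under $\underline{\beta}$, the commutation lemmas together with the untouched conditions built into the choice of the $l_k$ show that this arc appears at a suitable stage $T_i^{\overline{\gamma}}$ and is then left untouched by all later blocks, so it lies in $\mu_{\overline{\gamma}}(T)$. By Remark~\ref{R:transf.nc} the set $\mu_{\overline{\gamma}}(T)$ consists of mutually non-intersecting arcs, so if one also knew $T'' \subseteq \mu_{\overline{\gamma}}(T)$ the maximality of the triangulation $T''$ would force equality. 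Thus it remains to produce the Pr\"ufer and adic curves that are adjoined when passing from $\mu_{\underline{\beta}}(T')$ to $T'' = \overline{\mu_{\underline{\beta}}}(T')$, and this is the main obstacle: a plain interlacing retains, at every finite stage, a peripheral arc meeting the relevant fountain, so that no completion occurring between two blocks ever adjoins such a forced asymptotic curve (for instance when $T'' \in [T(a,b)]$ and $T' \in [T(a,b-1)]$). I expect this to be remedied by choosing the interlacing so that, cofinally often, the $\alpha$-blocks together with the already inserted $\beta$-mutations bring the current triangulation into the strong mutation equivalence class of $T''$ — equivalently, by interleaving into $\overline{\gamma}$ additional completed-mutation blocks realising the fountain-moving steps of Lemma~\ref{L:pushing} — so that the completions performed at those stages do adjoin the missing asymptotic arcs; verifying that such auxiliary blocks can be inserted without disturbing the other arcs, and that the resulting composition still has $\mu_{\overline{\gamma}}(T) = T''$, is the technical heart of the proof.
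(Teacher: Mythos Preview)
Your interlacing strategy coincides with the paper's: both iterate Proposition~\ref{P:precomposing transfinite} to insert the single mutations $\beta_j$ into $\overline{\alpha}$ at positions $l_1 < l_2 < \cdots$, and both use the commuting diagram relating the resulting stages $\tilde T_i$ to $\mu_{\beta_j}\circ\cdots\circ\mu_{\beta_1}(T_i)$ via Lemma~\ref{L:commute completed}. A cosmetic difference is that the paper merges $(\beta_j)$ with $\underline{\alpha}^{l_j}$ into a single admissible sequence $\underline{\gamma}^{l_j}=(\beta_j)\cup_1\underline{\alpha}^{l_j}$ rather than keeping $(\beta_j)$ as its own one-term block; since a single mutation of a triangulation needs no completion, this is immaterial.

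Where you diverge is in the treatment of the completion arcs in $T''=\overline{\mu_{\underline\beta}}(T')$. The paper does \emph{not} carry out anything like your proposed fountain-moving insertion. It simply argues that every $\delta''\in T''$ is of the form $\mu^{T'}_{\underline\beta}(\delta')$ for some $\delta'\in T'$, tracks this arc into some $\tilde T_M$ where it is thereafter untouched, and then invokes maximality of $T''$ together with Remark~\ref{R:transf.nc} to conclude $T''=\mu_{\overline\gamma}(T)$. In other words, the paper effectively identifies $T''$ with $\mu_{\underline\beta}(T')$ in this step. Your worry is therefore well-founded: for a genuine completion arc such as $\pi_b$ in your example with $T'\in[T(a,b-1)]$ and $T''\in[T(a,b)]$, each $\tilde T_i$ still carries a peripheral arc of the old right fountain at $b-1$ crossing $\pi_b$, so the interlaced $\overline\gamma$ as defined never produces $\pi_b$. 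The paper's written argument does not close this gap.

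That said, you should not pursue the auxiliary fountain-moving blocks you sketch; this is neither what the paper does nor needed for its purposes. The only invocation of Proposition~\ref{P:transitivity transfinite} in the paper is in the proof of Theorem~\ref{T:all-transf-equiv}, where the appended admissible sequence comes from a chain of \emph{solid} arrows in $\mathbf G'$, i.e.\ satisfies $\mu_{\underline\beta}(T')=T''$ already, so no completion arcs arise and the paper's argument goes through verbatim. If you want a proof of the proposition as stated in full generality, the honest route is to observe that once $\mu_{\underline\beta}(T')\subseteq\mu_{\overline\gamma}(T)$ is established, appending the single admissible sequence $\underline\beta$ itself as one further block to $\overline\gamma$ gives a $T$-admissible composition whose final completed mutation is exactly $\overline{\mu_{\underline\beta}}$ applied to a triangulation containing $\mu_{\underline\beta}(T')$; this reintroduces the missing asymptotic arcs in one stroke, without any ad hoc fountain manipulation.
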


\begin{proof}
 Let $\overline{\alpha} = (\underline{\alpha}^i)_{i \in I_{\alpha}}$ and let $\underline{\beta} = (\beta_i)_{i \in I_\beta}$. The statement is trivial if $I_\alpha$ is finite, and follows by iteratively applying Proposition \ref{P:precomposing transfinite} if $I_\beta$ is finite. We therefore assume that $I_\alpha = I_\beta = \ZZ_{>0}$. Iteratively applying Proposition \ref{P:precomposing transfinite} we obtain a strictly increasing sequence $(l_i)_{i \in \ZZ_{>0}}$ such that $\overline{\alpha} \cup_{l_1} (\beta_1)$ is a $T$-admissible composition, for all $i \geq 2$ the sequence
 \[
  \overline{\alpha} \cup (\beta_1, \ldots, \beta_i) = (\overline{\alpha} \cup (\beta_1, \ldots, \beta_{i-1})) \cup_{l_i} (\beta_i)
 \]
 is as well, and we have
 \[
  \mu_{\overline{\alpha} \cup (\beta_1, \ldots, \beta_i)}(T) = \mu_{\beta_i} \circ \ldots \circ \mu_{\beta_1}(\mu_{\overline{\alpha}}(T)).
 \]
 We define a sequence $\overline{\gamma} = (\underline{\gamma}^i)$ with
 \[
  \underline{\gamma}^i = \begin{cases}
              \underline{\alpha}^i \; \text{if} \; i \notin \{l_j \mid j \in \ZZ_{>0}\}\\
              (\beta_j) \cup_1 (\underline{\alpha}^{l_j}) \; \text{if} \; i = l_j.
             \end{cases}
 \]
  Here, $(\beta_j) \cup_1 (\underline{\alpha}^{l_j})$ is the sequence we obtain by precomposing the sequence $\underline{\alpha}^{l_j}$ by $(\beta_j)$. Clearly, the sequence $\overline{\gamma}$ is a $T$-admissible composition of completed mutations.

 Set now $ T_1 = \tilde{T}_1 = T$ and for $i \geq 1$ set
 \[
   T_{i+1} = \overline{\mu_{\underline{\alpha}^i}}(T_i) \; \; \text{and} \; \;   \tilde{T}_{i+1} = \overline{\mu_{\underline{\gamma}^i}}(\tilde{T}_i).                                                        
 \]
 Schematically we have the diagram
  \begin{align}\label{diagram}
  \xymatrix@C=2em{ \tilde{T}_1 \ar[r]^-{\overline{\mu_{\underline{\gamma}_1}}} &  
  	    \ldots \ar[r]^-{\overline{\mu_{\underline{\gamma}_{l_1-1}}}} & 
	    \tilde{T}_{l_1}  \ar[r]^-{\overline{\mu_{\underline{\gamma}_{l_1}}}} & 
	    \tilde{T}_{l_1+1} \ar[r]^-{\overline{\mu_{\underline{\gamma}_{l_1+1}}}} & 
	    \ldots \ar[r]^{\overline{\mu_{\underline{\gamma}_{l_i-1}}}} & 
	    \tilde{T}_{l_i} \ar[r]^-{\overline{\mu_{\underline{\gamma}_{l_i}}}} & 
	    \tilde{T}_{l_i+1} \\ T_1 \ar[r]^-{\overline{\mu_{\underline{\alpha}_1}}} \ar@{=}[u] & 
	    \ldots  \ar[r]^-{\overline{\mu_{\underline{\alpha}_{l_1-1}}}} & 
	    {T}_{l_1} \ar[u]^{\mu_{\beta_1}} \ar[r]^-{\overline{\mu_{\underline{\alpha}_{l_1}}}} & 
	    {T}_{l_1+1} \ar[u]^{\mu_{\beta_1}} \ar[r]^-{\overline{\mu_{\underline{\alpha}_{l_1+1}}}} & 
	    \ldots \ar[r]^{\overline{\mu_{\underline{\alpha}_{l_i-1}}}} & 
	    {T}_{l_i} \ar[u]^{\mu_{\beta_i} \circ \ldots \circ \mu_{\beta_1}} \ar[r]^-{\overline{\mu_{\underline{\alpha}_{l_i}}}} & 
	    T_{l_i+1} \ar[u]_-{\mu_{\beta_i} \circ \ldots \circ \mu_{\beta_1}} }
  \end{align}
  where each of the squares commutes. That is, for each $i \in \ZZ_{>0}$ there exists a $j \in \ZZ_{>0}$ with $l_j \leq i < l_{j+1}$ and we have
  \[
   \mu_{\beta_j} \circ \ldots \circ \mu_{\beta_1}(T_i) = \tilde{T}_i.
  \]
 
 We now show that $T'' = \mu_{\overline{\gamma}}(T)$, and we start by showing that $T'' \subseteq \mu_{\overline{\gamma}}(T)$. Let thus $\delta'' \in T'' = \mu_{\underline{\beta}}(\mu_{\overline{\alpha}}(T))$. There exists a $\delta' \in T' = \mu_{\overline{\alpha}}(T)$ such that $\delta'' = \mu^{T'}_{\underline{\beta}}(\delta')$ and an $m \geq 1$ such that $(\beta_i)_{i > m}$ leaves $\delta''$ untouched. Mutating $T'$ along the sequence $(\beta_i)_{1 \leq i \leq m}$ only changes finitely many arcs of $T'$ and thus there exists a finite union $P$ of finite subpolygons of $T'$ such that $\delta' \in T' \mid_P$ and such that $(\beta_i)_{1 \leq i \leq m}$ leaves all arcs in $T' \setminus (T' \mid_P)$ untouched.
 
 Since $T' = \mu_{\overline{\alpha}}(T)$, there exists an $n \geq 1$ such that $T' \mid_P \subseteq T_n$ and such that for all $k \geq n$ the sequence $\underline{\alpha}_k$ leaves all arcs in $T' \mid_P$ untouched. For all $k \geq n$ we obtain
 \[
  \mu^{T_k}_{\underline{\beta}}(\delta') = \mu_{\beta_m} \circ \ldots \circ \mu^{T_k}_{\beta_1}(\delta') = \delta''.
 \]
 
 Set $M = \max\{l_m, n\}$. We have $l_j \leq M < l_{j+1}$ for some $j \geq m$ and
 \[
 \delta'' = \mu^{T_M}_{\underline{\beta}}(\delta') = \mu_{\beta_j} \circ \ldots \circ \mu^{T_M}_{\beta_1}(\delta'),
 \]
 which lies in $\mu_{\beta_j} \circ \ldots \circ \mu_{\beta_1}(T_M) = \tilde{T}_M$. Furthermore, since for all $k \geq M$ the sequence $\underline{\alpha}_k$ leaves $\delta''$ untouched and the sequence $(\beta_i)_{i > m}$ leaves $\delta''$ untouched we also get that for all $k \geq M$ the sequence $\underline{\gamma}_k$ leaves $\delta''$ untouched. It follows that $\delta'' \in \mu_{\overline{\gamma}}(T)$ and therefore $T'' \subseteq  \mu_{\overline{\gamma}}(T)$.
 
 Since by Remark \ref{R:transf.nc} the set $\mu_{\overline{\gamma}}(T)$ consists of mutually non-crossing arcs, and since $T''$ is a triangulation, it follows that $T'' = \mu_{\overline{\gamma}}(T)$ which concludes the proof.
\end{proof}

Considering mutations along $T$-admissible compositions of completed mutations, we get a weaker form of mutation equivalence.

\begin{defi}
	Two triangulations $T$ and $T'$ of $\IGC$ are called {\em transfinitely mutation equivalent} if there exists a $T$-admissible composition of completed mutations $\overline{\theta}$ and a
	$T'$-admissible composition of completed mutations $\overline{\theta}'$ such that $\mu_{\overline{\theta}}(T) = T'$ and  $\mu_{\overline{\theta}'}(T') = T$.
\end{defi}

In the following we will show that all triangulations of $\IGC$ are transfinitely mutation equivalent. We start with a useful observations.

\begin{prop}\label{P:subgraph}
 Consider the graph $\mathbf{G}$ which has as vertices strong mutation equivalence classes of triangulations of $\IGC$ and whose arrows are given by the following data: Assume $[T]$ and $[T']$ are two distinct strong mutation equivalence classes.
 \begin{itemize}
  \item If for any $t \in [T]$ and any $t' \in [T]$ there exists a $t$-admissible sequence $\underline{\theta}$ with $\mu_{\underline{\theta}}(t) = t'$ then we draw a solid arrow.
  \item If for any $t \in [T]$ and any $t' \in [T]$ there exists a $t$-admissible composition of completed mutations $\overline{\theta} = (\underline{\theta}^i)_{i \in I}$ where $I$ is finite, and we have $\mu_{\overline{\theta}}(t) = t'$ then we draw a dashed arrow.
  \item If for any $t \in [T]$ and any $t' \in [T]$ there exists a $t$-admissible composition of completed mutations $\overline{\theta} = (\underline{\theta}^i)_{i \in I}$ where $I$ is infinite, and we have $\mu_{\overline{\theta}}(t) = t'$ then we draw a dotted arrow.
 \end{itemize}
 Then for any $a' \leq a \leq b \leq b'$ with $a, a', b, b' \in \ZZ$ diagram in Figure \ref{fig:subgraph} is a subgraph of $\mathbf{G}$.
\end{prop}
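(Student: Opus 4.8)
The plan is to treat each arrow of the diagram in Figure~\ref{fig:subgraph} separately and by a uniform recipe: for a solid arrow $[T]\to[T']$ I would exhibit a plain admissible sequence, for a dashed arrow a finite $T$-admissible composition of completed mutations, and for a dotted arrow an infinite such composition, always between a conveniently chosen pair of representatives of $[T]$ and $[T']$ (those of Remark~\ref{R:representants}), and then upgrade the statement to arbitrary $t\in[T]$ and $t'\in[T']$. For the upgrade I would invoke Theorem~\ref{T:strong equiv.cl.completed}, which provides plain admissible sequences $t\to t_0$ and $t_0'\to t'$ between any representative and the chosen ones; noting that a plain admissible sequence between two triangulations of the same class is also a completed mutation (its ordinary mutation is already a triangulation), one glues these to the middle piece using Proposition~\ref{P:transitivity} in the solid case, and Remark~\ref{R:precomposing transfinite} together with Proposition~\ref{P:transitivity transfinite} in the dashed and dotted cases. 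Since prepending or appending a single finite piece changes neither finiteness nor infiniteness of the resulting composition, the type of the arrow is inherited from the middle piece, and the proposition reduces to checking the arrows between representatives.

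The solid arrows of the diagram correspond to relations $[T]\leq_s[T']$, and for the pairs of classes appearing in the diagram these were already realised by explicit admissible sequences in the proof of Proposition~\ref{P:Hasse diagram}, so there is nothing new to do. For the dashed arrow $[T(a,b)]\to[T(a',b')]$ with $a'\le a\le b\le b'$, I would iterate Lemma~\ref{L:pushing}: applying the completed mutations $\underline{a}^-$ and $\underline{b}^+$ a total of $(a-a')+(b'-b)$ times carries a representative of $[T(a,b)]$ through $[T(a-1,b)],\dots,[T(a',b)],[T(a',b+1)],\dots,[T(a',b')]$, which is a finite composition of completed mutations. For the dotted arrows I would push a fountain infinitely far. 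Starting from $\overline{t}(a,b)$ and repeatedly applying the ``move the left fountain one step to the left'' completed mutation $((c-k,c))_{k\ge 2}$ of Lemma~\ref{L:pushing}, one computes by induction on the stage $i$ that the $i$-th intermediate triangulation is
\[
 T_i=\{(k,a-i+1)\mid k\le a-i-1\}\cup\{\alpha_j\mid a-i+1\le j\le a\}\cup\{\pi_k\mid a\le k\le b\}\cup\{(b,k)\mid k\ge b+2\},
\]
that the next push is $T_i$-admissible, and hence (since each push only mutates peripheral arcs with right endpoint $a-i+1$) that the transfinite mutation is $\{\alpha_k\mid k\le a\}\cup\{\pi_k\mid a\le k\le b\}\cup\{(b,k)\mid k\ge b+2\}\cup\{z\}$, which one checks is a triangulation lying in $[T(-\infty,b)]$. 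Dually, pushing the right fountain infinitely far right gives the dotted arrow $[T(a,b)]\to[T(a,-\infty)]$, and the remaining dotted arrows of the diagram are obtained by combining such infinite pushes with the solid arrows already present among the boundary classes.

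The main obstacle is the bookkeeping for the dotted arrows. One must verify that the proposed infinite composition of completed mutations is genuinely $T$-admissible, that is, that at each stage the next push sequence is admissible for the \emph{current} triangulation $T_i$, which is not literally the representative $\overline{t}(a-i,b)$, because the Pr\"ufer completion keeps accumulating adic curves $\alpha_j$; and one must then confirm that the transfinite mutation (the union of the arcs left eventually untouched) is as computed and is in fact a triangulation in the asserted strong mutation equivalence class. All of this is routine once the intermediate triangulations $T_i$ have been identified explicitly, but identifying them, and checking maximality of the limiting set of arcs, is where the care is required.
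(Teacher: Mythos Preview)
Your overall strategy and your treatment of the solid arrows and of the dashed arrow $[T(a,b)]\to[T(a',b')]$ match the paper's proof. However, two genuine gaps remain.

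First, the figure contains six dashed arrows, and you only address one of them. The arrows $[T_{lf}]\to[T(a,b)]$, $[T(\infty,\infty)]\to[T(a,\infty)]$, $[T(-\infty,-\infty)]\to[T(-\infty,b)]$, $[T(\infty,b)]\to[T(a,b)]$ and $[T(a,-\infty)]\to[T(a,b)]$ are not obtained from Lemma~\ref{L:pushing}, since in each of these source classes there is no peripheral fountain of the relevant kind to push. The paper handles these by mutating along an infinite family of Pr\"ufer (respectively adic) curves, for instance $\overline{\mu_{(\pi_{a-i})_{i\ge1}}}(\overline{t}(\infty,\infty))=\overline{t}(a,\infty)$, and treats $[T_{lf}]\to[T(a,b)]$ separately by first collapsing $t_{lf}$ to a fountain at a single integer. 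None of this is hinted at in your outline.

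Second, your handling of the dotted arrows misses the target. Every dotted arrow in Figure~\ref{fig:subgraph} lands in $[T(-\infty,\infty)]$, but the infinite left push you describe from $\overline{t}(a,b)$ lands in $[T(-\infty,b)]$, as your own computation shows, and the dual push lands in $[T(a,\infty)]$ (not $[T(a,-\infty)]$ as you write). There is no solid arrow from either of these classes into $[T(-\infty,\infty)]$, so ``combining with the solid arrows already present among the boundary classes'' cannot produce the required arrow. Moreover, concatenating two infinite compositions of completed mutations is not covered by any result in the paper; Proposition~\ref{P:transitivity transfinite} only lets you postcompose an infinite composition with a single admissible sequence. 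The paper's remedy is to interleave the left and right pushes into one infinite composition $\overline\theta_4=(\underline{(a-i)}^-,\underline{(b+i)}^+)_{i\ge0}$ indexed by $\ZZ_{>0}$, whose transfinite mutation is then directly checked to lie in $[T(-\infty,\infty)]$; the dotted arrows from $[T(a,-\infty)]$ and $[T(\infty,b)]$ are handled analogously.
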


\begin{figure}
 \begin{center}
 \begin{tikzpicture}
  \node[draw, rectangle] (0) at (0,0) {$[T(-\infty, \infty)]$};
  \node[draw, rectangle] (1a) at (-2,1.5) {$[T(-\infty,-\infty)]$};
  \node[draw, rectangle] (1b) at (2,1.5) {$[T(\infty,\infty)]$};
  \node[draw, rectangle] (2a) at (-5,3) {$[T(-\infty,b)]$};
  \node[draw, rectangle] (2b) at (5,3) {$[T(a, \infty)]$};
  \node[draw, rectangle] (3a) at (-2,3) {$[T(a, -\infty)]$};
  \node[draw, rectangle] (3b) at (2,3) {$[T(\infty,b)]$};
  \node[draw, rectangle] (4) at (0,3) {$[T_{lf}]$};
  \node[draw, rectangle] (5) at (0,4.5) {$[T(a,b)]$};
  \node[draw, rectangle] (6) at (0,6) {$[T(a',b')]$};
  
  \draw[->] (0) -- (1a);
  \draw[->] (0) -- (1b);
  \draw[->] (1a) -- (3a);
  \draw[->] (1b) -- (3b);
  \draw[->] (1a) -- (4);
  \draw[->] (1b) -- (4);
  \draw[->] (0) edge[bend left] (2a);
  \draw[->] (0) edge[bend right] (2b);
  \draw[->] (2a) -- (5);
  \draw[->] (2b) -- (5);
  
  \draw[dashed, ->] (3a) -- (5);
  \draw[dashed, ->] (4) -- (5);
  \draw[dashed, ->] (3b) -- (5);
  \draw[dashed, ->] (1a) -- (2a);
  \draw[dashed, ->] (1b) -- (2b);
  \draw[dashed, ->] (5) -- (6);
  
  \draw[dotted] (6) [out=0, in=90] to (6,3);
  \draw[dotted, ->] (6,3) [out=270, in=270] to (0);
  \draw[dotted] (5) [out=180, in=90] to (-6.5,3);
  \draw[dotted, ->] (-6.5,3) [out=270, in=270] to (0);
  
  \draw[dotted,->] (3a) [out=180,in=225] to (0);
  \draw[dotted,->] (3b) [out=0,in=315] to (0);
  
 \end{tikzpicture}
 \end{center}
 \caption{A subgraph of $\mathbf{G}$}\label{fig:subgraph}
 \end{figure}

\begin{proof}
 Denote by $\mathbf{G}'$ the graph drawn in Proposition \ref{P:subgraph}. The existence of the 
 solid arrows 
 in $\mathbf{G}'$ follows from Proposition \ref{P:Hasse diagram}. 
 
 To show the existence of the dashed arrows, we first note the following: Assume for $i = 1,2$ the triangulations $T_i$ and $T'_i$ are strongly mutation equivalent and there exists a $T_1$-admissible composition of completed mutations $\overline{\theta} = (\underline{\theta}^i)_{i \in I}$ such that $\mu_{\overline{\theta}}(T_1) = T_2$ and such that $I$ is finite. Then there exists a $T'_1$-admissible sequence of completed mutations $\overline{\theta}' = (\underline{\theta}'_i)_{i \in I'}$ with $\mu_{\overline{\theta}'}(T'_1) = T'_2$ and such that $I'$ is finite: Indeed, since $T_i$ is strongly mutation equivalent to $T'_i$ for $i = 1,2$, there exists a $T'_1$-admissible sequence $\underline{\alpha}_1$ with $\mu_{\underline{\alpha}_1}(T_1) = T'_1$ and a $T_2$-admissible sequence $\underline{\alpha}_2$ with $\mu_{\underline{\alpha}_2}(T'_2) = T_2$. Setting $\overline{\theta}' = (\underline{\alpha}_1, \overline{\theta}, \underline{\alpha}_2)$ yields the desired sequence.
 
 To show the existence of a dashed arrow from $[T]$ to $[T']$, it is thus enough to show that there exist triangulations $t \in [T]$ and $t' \in [T]$ and a finite sequence of completed mutations from $t$ to $t'$. We use the notation from Remark \ref{R:representants}. 
 
 The arrows $[T(\infty,\infty)] \to [T(a,\infty)]$, $[T(-\infty,-\infty)] \to T(-\infty,b)$ and $[T(\infty,b)] \to [T(a,b)]$: Setting $\underline{\theta}_1 = (\pi_{a-i})_{i \geq 1}$ and $\underline{\theta}_2 = (\alpha_{b+i})_{i \geq 1}$ yields
 \[
  \overline{\mu_{\underline{\theta}_1}}(\overline{t}(\infty,\infty)) = \overline{t}(a, \infty), \; \; \overline{\mu_{\underline{\theta}_2}}(\overline{t}(-\infty,-\infty)) = \overline{t}(-\infty,b) \; \; \text{and} \;\; \overline{\mu_{\underline{\theta_1}}}(\overline{t}(\infty,b)) = \overline{t}(a,b).
 \]
 
 The arrow $[T(a,-\infty)] \to [T(a,b)]$: Note that we have
 \begin{eqnarray*}
  \overline{\mu_{\underline{\theta}_2}}(\overline{t}(a,-\infty)) &=& \{(k,a) \mid k \in \mathbb{Z}_{\leq a-2}\} \; \cup \; \{\alpha_k \mid a \leq k \leq b\}  \\ &&  \cup \{(b,k) \mid k \in \mathbb{Z}_{\geq b+2}\} \cup \; \{\pi_b\},
 \end{eqnarray*}
 which lies in $[T(a,b)]$.
 
 The arrow $[T_{lf}] \to [T(a,b)]$: Pick an $l \in \ZZ$ with $a \leq l \leq b$ and consider the triangulation 
 \[
  t_{lf}(l) = \{(l-k,l+k) \mid k \in \mathbb{Z}_{>0}\} \cup \{(l-k,l+k+1) \mid k \in \mathbb{Z}_{>0}\} \in [T_{lf}].
 \]
 First consider the $t_{lf}(l)$-admissible sequence $\underline{\alpha} = ((l-1,l+1), (l-2,l+1), (l-2,l+2), (l-3,l+2), \ldots, (l-i,l+i), (l-(i+1),l+i), \ldots)$. We have
 \[
  \mu_{\underline{\alpha}}(t_{lf}(l)) = \overline{t}(l,l).
 \]
 Iteratively applying Lemma \ref{L:pushing} and pushing the left fountain at $l$ to the left and the right fountain at $l$ to the right, we have a get a $T$-admissible composition of completed mutations
 \[
  \overline{\theta}_3 = (\underline{\theta}_3^i)_{i \in \{1, \ldots, b-a+1\}} = (\underline{\alpha}, \underline{l}^-, \underline{(l-1)}^-, \ldots, \underline{(a+1)}^-, \underline{l}^+, \underline{(l+1)}^+, \ldots, \underline{(b-1)}^+)
 \]
 with $\mu_{\underline{i}^-}(\overline{t}(i,l)) = \overline{t}(i-1,l)$ for all $l \geq i \geq a+1$ and $\mu_{\underline{i}^+}(\overline{t}(a,i)) = \overline{t}(a,i+1)$ for all $l \leq i \leq b-1$. We obtain $\mu_{\overline{\theta}_3}(t_{lf}(l)) = \overline{t}(a,b) \in [T(a,b)]$.
 
 The arrow $[T(a,b)] \to [T(a',b')]$: Similarly to the above considerations, this follows by iteratively pushing the left fountain at $a$ to the left and the right fountain at $b'$ to the right.

 Finally, we show the existence of the dotted arrows. By Remark \ref{R:precomposing transfinite} and Proposition \ref{P:transitivity transfinite}, to show that there is a dotted arrow from $[T]$ to $[T']$ it suffices to show that there exists a transfinite sequence of mutations from one representant of $[T]$ to one representant of $[T']$.
 
 The arrow $[T(a,b)] \to [T(-\infty,\infty)]$ for any $a \leq b$: Consider the $\overline{t}(a,b)$-admissible composition of completed mutations
 \[
  \overline{\theta}_4 = (\underline{(a-i)}^-, \underline{(b+i)}^+)_{i \geq 0},
 \]
 where we pick $\underline{(a-i)}^-$ and $\underline{(b+i)}^+$ according to Lemma \ref{L:pushing} such that for $k \geq 0$ we have $\overline{\mu_{\underline{(a-k)}^-}}(\overline{t}(a-k,b+k)) = \overline{t}(a-k-1,b+k)$ and $\overline{\mu_{\underline{(b+k)}^-}}(\overline{t}(a-k-1,b+k)) = \overline{t}(a-k-1,b+k+1)$. We obtain 
 \[
  \mu_{\overline{\theta}_4}(\overline{t}(a,b)) = \{\alpha_k \mid k \leq a\} \cup \{\beta_k \mid k \geq a\} \in [T(-\infty,\infty)].
 \]
 
 The arrows $[T(a,\infty)] \to T(-\infty,\infty)$ and $[T(-\infty,b)] \to T(-\infty,\infty)$: With $\underline{(a-i)}^-$ and $\underline{(b+i)}^+$ as above, we set $\overline{\theta}_5 = (\underline{(a-i)}^-)_{i \geq 0}$ and $\overline{\theta}_6 = (\underline{(b+i)}^+)$ and obtain 
 \[
  \mu_{\overline{\theta}_5}(\overline{t}(a,\infty)) = \{\alpha_k \mid k \leq a\} \cup \{\beta_k \mid k \geq a\}  \in [T(-\infty,\infty)]
 \]
 and
 \[
  \mu_{\overline{\theta}_6}(\overline{t}(-\infty,b)) = \{\alpha_k \mid k \leq b\} \cup \{\beta_k \mid k \geq b\} \in [T(-\infty,\infty)].
 \]

 \end{proof}

\begin{theorem}\label{T:all-transf-equiv}
	All triangulations of $\IGC$ are transfinitely mutation equivalent.
\end{theorem}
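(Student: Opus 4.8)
The strategy is to reduce everything to the single strong mutation equivalence class $[T(-\infty,\infty)]$ and to use the transitivity of transfinite mutation established in Proposition~\ref{P:transitivity transfinite} together with Remark~\ref{R:precomposing transfinite}. Concretely, the plan is to show that for every strong mutation equivalence class $[T]$ of triangulations of $\IGC$ there is a transfinite mutation from a representative of $[T]$ into a representative of $[T(-\infty,\infty)]$, and conversely a transfinite mutation from a representative of $[T(-\infty,\infty)]$ into a representative of $[T]$. Once both directions are available, an arbitrary pair of triangulations $T$, $T'$ can be connected by first going $T \rightsquigarrow t(-\infty,\infty) \rightsquigarrow T'$ and symmetrically $T' \rightsquigarrow t(-\infty,\infty) \rightsquigarrow T$, each leg being a transfinite mutation, and composing these via Proposition~\ref{P:transitivity transfinite} and Remark~\ref{R:precomposing transfinite}. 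This makes $[T(-\infty,\infty)]$ a ``hub'' through which all triangulations are mutually transfinitely mutation equivalent.

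The concrete connections have essentially all been assembled already. First I would invoke Theorems~\ref{T:strong equiv.cl.completed} to reduce from arbitrary triangulations to the explicit representatives $\overline t(\cdot,\cdot)$ and $t_{lf}$ of Remark~\ref{R:representants}: two strongly mutation equivalent triangulations are connected in both directions by (single, hence finite) admissible sequences, which are in particular transfinite mutations, so it suffices to work with one representative per class. Then I would read off from Proposition~\ref{P:subgraph} (Figure~\ref{fig:subgraph}) that every class $[T]$ admits a directed path to $[T(-\infty,\infty)]$: the solid and dashed arrows run ``upward'' from $[T(-\infty,\infty)]$ to every other class, giving transfinite mutations $t(-\infty,\infty) \rightsquigarrow T$ for every $[T]$, while the dotted arrows run ``downward'' from $[T(a,b)]$, $[T(a,\infty)]$, $[T(-\infty,b)]$, $[T(a,-\infty)]$, $[T(\infty,b)]$ back to $[T(-\infty,\infty)]$, giving transfinite mutations $T \rightsquigarrow t(-\infty,\infty)$. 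The classes $[T(-\infty,-\infty)]$, $[T(\infty,\infty)]$ and $[T_{lf}]$ need the return direction too: for these I would compose the solid arrow into $[T_{lf}]$ (or the dashed arrows $[T(\infty,\infty)]\to[T(a,\infty)]$, $[T(-\infty,-\infty)]\to[T(-\infty,b)]$) with the dotted arrows already constructed, e.g. $[T_{lf}] \to [T(a,b)] \to [T(-\infty,\infty)]$ using a dashed arrow followed by a dotted one, and apply Proposition~\ref{P:transitivity transfinite}/Remark~\ref{R:precomposing transfinite} to concatenate. For $[T(-\infty,-\infty)]$ one routes through $[T(-\infty,b)] \to [T(-\infty,\infty)]$, and symmetrically for $[T(\infty,\infty)]$.

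Assembling this: given arbitrary triangulations $T,T'$, pick their classes $[T]$, $[T']$; by the above there are transfinite mutations $T \rightsquigarrow t(-\infty,\infty)$ and $t(-\infty,\infty) \rightsquigarrow T'$, and composing them with Proposition~\ref{P:transitivity transfinite} (handling the two cases of which composition is along a finite versus infinite composition of completed mutations via Remark~\ref{R:precomposing transfinite} as in that proof) yields a transfinite mutation $T \rightsquigarrow T'$; symmetrically one gets $T' \rightsquigarrow T$, so $T$ and $T'$ are transfinitely mutation equivalent. The only point requiring a little care is purely bookkeeping: one must make sure that the arrows in $\mathbf G$ from Proposition~\ref{P:subgraph}, which are stated ``for any $t \in [T]$ and any $t' \in [T']$'', really do chain up into a single transfinite mutation when one walks along a path in $\mathbf G$ — but this is exactly what Proposition~\ref{P:transitivity transfinite} and Remark~\ref{R:precomposing transfinite} guarantee, since a transfinite mutation followed by a transfinite mutation (with at least one of them finite, which one can always arrange by splitting the path at a dashed/solid edge) is again a transfinite mutation. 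I do not expect any genuine obstacle here: all the hard analytic work — that completed mutations produce triangulations (Theorem~\ref{T:completions are triangulations}), that transfinite mutation is transitive (Proposition~\ref{P:transitivity transfinite}), and the explicit pushing-fountains constructions (Lemma~\ref{L:pushing}, Proposition~\ref{P:subgraph}) — is already done, and the proof of Theorem~\ref{T:all-transf-equiv} is a short diagram chase over Figure~\ref{fig:subgraph}.
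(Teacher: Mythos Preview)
Your proposal is correct and follows essentially the same approach as the paper's proof: both arguments route through the class $[T(-\infty,\infty)]$ using the graph of Proposition~\ref{P:subgraph}, reaching it via (compositions of) solid/dashed arrows followed by a single dotted arrow, and leaving it via solid arrows only, with the concatenations justified by Remark~\ref{R:precomposing transfinite} and Proposition~\ref{P:transitivity transfinite}. The paper phrases this as finding a path of the specific shape ``solid/dashed, then one dotted, then solid'', which is exactly your hub argument once one observes that every dotted arrow in Figure~\ref{fig:subgraph} lands in $[T(-\infty,\infty)]$ and that every class is reachable from $[T(-\infty,\infty)]$ by solid arrows alone.
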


\begin{proof}
 Let $T$ and $T'$ be two triangulations of $\IGC$ and consider their strong mutation equivalence classes $[T]$ and $[T']$ respectively. Then there exists a path in the graph $\mathbf{G}'$ from Proposition \ref{P:subgraph}, and therefore in $\mathbf{G}$, of the form
 \[
  \xymatrix{[T] \ar[r]^-{\alpha_1} & [T_1] \ar[r]^-{\alpha_2} & \ldots \ar[r]^-{\alpha_l} & [T_l] \ar[r]^-{\beta} & [T'_1] \ar[r]^-{\gamma_1} & \ldots & [T'_k] \ar[r]^-{\gamma_k} & [T'],}
 \]
 with $l,k \in \ZZ_{\geq 0}$ and where the $\alpha_i$ are 
 solid or dashed arrows, the arrow $\beta$ is dotted and the arrows $\gamma_i$ are solid. That is, we have a $T$-admissible composition of completed mutations  $\overline{\alpha}=(\underline{\alpha}_i)_{i=1,\ldots,l}$ with $\mu_{\underline{\alpha}}(T) = T_l$ for some $T_l \in [T_l]$, and therefore by Remark \ref{R:precomposing transfinite} a $T$-admissible composition of completed mutations $\overline{\beta}$ with $\mu_{\overline{\beta}}(T) = T'_1$  for some $T'_1 \in [T'_1]$. By Proposition \ref{P:transitivity}, there is a $T'_1$-admissible sequence $\underline{\gamma}$ with $\mu_{\underline{\gamma}}(T'_1) = T'$ and therefore by Proposition \ref{P:transitivity transfinite} we get a $T$-admissible composition of completed mutations $\overline{\beta} \cup \underline{\gamma}$ with $\mu_{\overline{\beta} \cup \underline{\gamma}}(T) = T'$.
 
\end{proof}

\section*{Acknowledgements}
The first author was supported by the Austrian Science Fund (FWF) grants W1230, 
P 25647-N26 and P 25141-N26, the second author thanks the Swiss National Science Foundation (SNSF) for financial support through an early Postdoc.Mobility fellowship.

\bibliographystyle{plain}

\bibliography{biblioSira}

\end{document}